\theoremstyle{plain}
\newtheorem{theor10}{Theorem}
\newenvironment{theor1}
  {\pushQED{\qed}\begin{theor10}}
  {\popQED\end{theor10}}
\newtheorem{cor10}[theor10]{Corollary} 
\newtheorem{lem0}{Lemma}[section] 
\newenvironment{lem} 
  {\pushQED{\qed}\begin{lem0}}
  {\popQED\end{lem0}}
\theoremstyle{plain}
\newtheorem{theor0}[lem0]{Theorem}
\newtheorem{prop0}[lem0]{Proposition}
\newenvironment{prop}
  {\pushQED{\qed}\begin{prop0}}
  {\popQED\end{prop0}}
\newtheorem{cor0}[lem0]{Corollary}
\theoremstyle{definition}
\newtheorem{rems0}[lem0]{Remarks}
\newtheorem{rem0}[lem0]{Remark}
\numberwithin{equation}{section}
\newcommand{\rnabla}{{\langle\nabla\rangle}} 
\newcommand{\rv}{{\langle v\rangle}}
\newcommand{\rt}{{\langle t\rangle}}
\newcommand{\rs}{{\langle s\rangle}}
\newcommand{\rvs}{{\langle v_*\rangle}}
\newcommand{\Sp}{\mathbb S}
\newcommand{\e}{\varepsilon}
\newcommand{\R}{\mathbb R}
\newcommand{\Bc}{\mathcal B}
\newcommand{\Bo}{\overline{\mathcal B}}
\newcommand{\Id}{\operatorname{Id}}
\newcommand{\Tr}{\operatorname{tr}}
\newcommand{\sgn}{\operatorname{sgn}}
\newcommand{\LB}{{\operatorname{LB}}}
\newcommand{\loc}{{\operatorname{loc}}}
\newcommand{\Ld}{\operatorname{L}}
\newcommand{\3}{\operatorname{|\!|\!|}}
\newcommand{\step}[1]{\noindent \textit{Step} #1.}
\newcommand{\substep}[1]{\noindent \textit{Substep} #1.}
\newcommand{\pv}{\operatorname{p.}\!\operatorname{v.}}
\def\dbar{\,{{\mathchar'26\mkern-12mud}}}
\title[Well-posedness of the Lenard--Balescu equation]{Well-posedness of the Lenard--Balescu equation\\with smooth interactions}
\author[M. Duerinckx]{Mitia Duerinckx}
\address[Mitia Duerinckx]{Université Paris-Saclay, CNRS, Laboratoire de Mathématiques d'Orsay, Orsay, F-91400, France
\& Universit\'e Libre de Bruxelles, Département de Mathématique, Brussels, B-1050, Belgium
\& University of California, Los Angeles, Department of Mathematics, CA-90095, USA}
\email{mitia.duerinckx@u-psud.fr}
\author[R. Winter]{Raphael Winter}
\address[Raphael Winter]{École Normale Supérieure de Lyon, CNRS, Unité de Mathématiques Pures et Appliquée, Lyon, F-69364, France}
\email{raphael.winter@ens-lyon.fr}
\begin{document}
\selectlanguage{english}

\begin{abstract}
The Lenard--Balescu equation was formally derived in the 1960s as the fundamental description of the collisional process in a spatially homogeneous system of interacting particles. It can be viewed as correcting the standard Landau equation by taking into account collective screening effects.
Due to the reputed complexity of the Lenard--Balescu equation in case of Coulomb interactions, its mathematical theory has remained void apart from the linearized setting~\cite{Merchant-Liboff-73,Strain-07}.
In this contribution, we focus on the case of smooth interactions and we show that dynamical screening effects can then be handled perturbatively. Taking inspiration from the Landau theory, we establish global well-posedness close to equilibrium, local well-posedness away from equilibrium, and we discuss the convergence to equilibrium and the validity of the Landau approximation.
\end{abstract}

\maketitle

\setcounter{tocdepth}{1}
\tableofcontents

\section{Introduction}
In a spatially homogeneous particle system with mean-field interactions, the self-consistent Vlasov force vanishes, and the particle velocity density $F$ is predicted to satisfy to leading order the so-called Lenard--Balescu equation,
\begin{align}\label{eq:LB}
\partial_tF=\LB(F),
\end{align}
where the Lenard--Balescu operator is given by
\[\LB(F)\,:=\,\nabla\cdot\int_{\R^d}B(v,v-v_*;\nabla F)\,\big(F_*\nabla F-F\nabla_*F_*\big)\,dv_*,\]
with the notation $F=F(v)$, $F_*=F(v_*)$, $\nabla=\nabla_v$, and $\nabla_*=\nabla_{v_*}$,
in terms of the collision kernel
\begin{align}\label{eq:LB-kernel}
B(v,v-v_*;\nabla F)\,:=\,\int_{\R^d}(k\otimes k)\,\pi\widehat V(k)^2\tfrac{\delta(k\cdot(v-v_*))}{|\e(k,k\cdot v;\nabla F)|^2}\,\dbar k,
\end{align}
where $V$ stands for the particle interaction potential and where the dispersion function $\e$ is defined by
\begin{align}\label{eq:LB-disp}
\e(k,k\cdot v;\nabla F)\,:=\,1+\widehat V(k)\int_{\R^d}\tfrac{k\cdot\nabla F(v_*)}{k\cdot(v-v_*)-i0}\,dv_*.
\end{align}
Note that the collision kernel~\eqref{eq:LB-kernel} only makes sense provided that this dispersion function does not vanish, which is related to the linear Vlasov stability of $F$ in view of the Penrose criterion, cf.~\cite{Penrose}.
In the physically important case of 3D Coulomb interactions, \mbox{$\widehat V(k)=|k|^{-2}$}, the integral over $k$ in~\eqref{eq:LB-kernel} is logarithmically divergent at infinity, which corresponds to the contribution of collisions with small impact parameter, hence an appropriate cut-off $|k|\le\frac1{\delta}$ needs to be included at so-called Landau length~$\delta$. 
In 1D, particle systems undergo a kinetic blocking and the Lenard--Balescu operator is trivial ($B\equiv0$): we henceforth restrict to space dimension~$d\ge2$.
 
\medskip 
Equation~\eqref{eq:LB} was first derived in the early 60s independently by Guernsey~\cite{Guernsey-60,Guernsey-62}, Balescu~\cite{Balescu-60,Balescu-63}, and Lenard~\cite{Lenard-60} as the accurate kinetic description of homogeneous plasmas (see also~\cite[Appendix~A]{Nicholson}).
The collision kernel~\eqref{eq:LB-kernel} is obtained formally by resolving the long-time effects of particle correlations. We refer to~\cite{D-21a,DSR,VW-18,Winter-21,Nota-21-1,Nota-21-2} for first steps towards its rigorous justification;
see also~\cite{Bobylev-Potapenko-19} for a discussion of the validity of the spatial homogeneity assumption in plasma physics.
At a formal level, equation~\eqref{eq:LB} has the following expected physical properties:
\begin{enumerate}[$\bullet$]
\item it preserves mass, momentum, and kinetic energy,
\begin{equation}\label{eq:conserv}
\partial_t\int_{\R^d}\big(1,v,\tfrac12|v|^2\big)F=0;
\end{equation}
\item its steady states are Maxwellian distributions,
\[\LB(\mu_\beta)=0,\qquad\mu_\beta(v)\,:=\,(\tfrac\beta\pi)^{\frac d2}e^{-\beta|v|^2},\qquad0<\beta<\infty;\]
\item it satisfies an $H$-theorem, that is, the Boltzmann entropy is decreasing along the flow,
\begin{align}
\partial_t\int_{\R^d}F\log F
~=&~-\tfrac12\iint _{\R^d\times\R^d}FF_*\Big(\tfrac{\nabla F}F-\tfrac{\nabla_*F_*}{F_*}\Big)\cdot B(v,v-v_*;\nabla F)\Big(\tfrac{\nabla F}F-\tfrac{\nabla_*F_*}{F_*}\Big)\nonumber\\
~\leq&~0.\label{eq:H-princ}
\end{align}
\end{enumerate}
In particular, this equation formally describes the relaxation of the velocity density $F$ towards Maxwellian equilibrium.

\medskip
The main difficulty to study this equation stems from the nonlinearity and nonlocality of its collision kernel~\eqref{eq:LB-kernel}, which from the physical perspective express collective effects and dynamical screening.
Neglecting these would amount to replacing the dispersion function~$\e(k,k\cdot v;\nabla F)$ by a constant in the collision kernel $B(v,v-v_*;\nabla F)$: the latter would then reduce to the usual Landau kernel $B_L(v-v_*)$,
\begin{multline}\label{eq:expl-Landau}
B(v,v-v_*;\nabla F)~~\leadsto~~\int_{\R^d}(k\otimes k)\,\pi\widehat V(k)^2\,\delta(k\cdot(v-v_*))\,\dbar k\\
~=~\tfrac{L}{|v-v_*|}\Big(\Id-\tfrac{(v-v_*)\otimes(v-v_*)}{|v-v_*|^2}\Big)~=:~B_L(v-v_*),
\end{multline}
as follows indeed from a direct computation, see~\eqref{eq:expl-Landau-pr} below,
with explicit prefactor
\begin{equation}\label{eq:cst-L}
L\,:=\,\frac{\omega_{d-1}}{d\omega_d}\int_{\R^d}|k|\pi\widehat V(k)^2\dbar k,
\end{equation}
where $\omega_{n}$ stands for the volume of the $n$-dimensional unit ball.
In case of 3D Coulomb interactions, $\widehat V(k)=|k|^{-2}$, collective effects captured by the Lenard--Balescu dispersion function $\e(k,k\cdot v;\nabla F)$ are particularly relevant as they include Debye shielding: indeed, while Landau integrals~\eqref{eq:expl-Landau} and~\eqref{eq:cst-L} diverge logarithmically and require a cut-off both at small and large $k$ in that case, the Lenard--Balescu integral~\eqref{eq:LB-kernel} is formally convergent at small $k$.
As shown in~\cite{Strain-07}, the inclusion of small wavenumbers actually yields a huge difference between Lenard--Balescu and Landau operators: when evaluated at Maxwellian, the dispersion function $\e(k,k\cdot v;\nabla\mu_\beta)$ is not bounded away from $0$
and the collision case $B(v,v-v_*;\nabla \mu_\beta)$ displays exponential growth in velocity in some directions, in stark contrast with the Landau kernel~\eqref{eq:expl-Landau}, 
cf.~\cite[Theorem~6]{Strain-07}.
From the mathematical perspective, this makes the rigorous study of the Lenard--Balescu equation reputedly difficult in the Coulomb setting, see e.g.~\cite[p.64]{Alexandre-Villani-04}: even local well-posedness close to Maxwellian remains an open question as it would require fine control of this unbounded collision kernel along the flow.

\medskip
In the present contribution, we focus on the simpler case of a smooth interaction potential~$V$: we show that at Maxwellian equilibrium the dispersion function $\e(k,k\cdot v;\nabla\mu_\beta)$ is bounded away from $0$ in that case, see Lemma~\ref{lem:eps-bndd-R} below, which then allows to handle dynamical screening perturbatively and to compare to the well-studied Landau case~\eqref{eq:expl-Landau}.
In this spirit, the following main result states the global well-posedness for strong solutions close to Maxwellian.
The proof is inspired by Guo's work on the Landau equation~\cite{Guo-02}. Note that we expect the statement to hold for all $s>\frac32$, but we restrict to integer differentiability to avoid additional technicalities.

\begin{theor1}[Global well-posedness close to equilibrium]\label{th:global}
Let $V\in\Ld^1\cap \dot H^2(\R^d)$ be isotropic and positive definite, and assume $xV\in\Ld^2(\R^d)$.
For all $s\ge2$ and $0<\beta<\infty$, there is a constant $C_{V,\beta,s}$ large enough such that the following holds: for all initial data~$F^\circ\in\Ld^1(\R^d)$ of the form
\[F^\circ=\mu_\beta+\sqrt{\mu_\beta} f^\circ\ge0,\qquad f^\circ\in H^s(\R^d),\]
satisfying smallness and centering conditions,
\begin{equation}\label{eq:smallness-st}
\|f^\circ\|_{H^s(\R^d)}\,\le\,\tfrac1{C_{V,\beta,s}},\qquad\int_{\R^d}\big(1,v,\tfrac12|v|^2\big)\sqrt{\mu_\beta} f^\circ=0,
\end{equation}
there exists a unique global strong solution $F$ of the Lenard--Balescu equation~\eqref{eq:LB} with initial data $F^\circ$, in the form
\[F=\mu_\beta+\sqrt{\mu_\beta} f\ge0,\qquad f\in\Ld^\infty(\R^+;H^s(\R^d)),\]
and it satisfies for all $t\ge0$,
\begin{equation*}
\|f^t\|_{H^s(\R^d)}\,\lesssim_{V,\beta,s}\,\|f^\circ\|_{H^s(\R^d)}.\qedhere
\end{equation*}
\end{theor1}

As the Lenard--Balescu equation satisfies an $H$-theorem, cf.~\eqref{eq:H-princ}, solutions are expected to relax to Maxwellian equilibrium and we indeed establish the following convergence result. The proof is inspired by corresponding previous work on the Landau equation, see in particular~\cite{Toscani-Villani-99,Toscani-Villani-00,Guo-Strain-08}.

\begin{theor1}[Convergence to equilibrium]\label{cor:conv}
Under the assumptions of Theorem~\ref{th:global}, given \mbox{$s\ge2$} and $0<\beta<\infty$, let $F=\mu_\beta+\sqrt{\mu_\beta}f$ be the constructed unique global solution of the Lenard--Balescu equation~\eqref{eq:LB}.
It converges to equilibrium in the sense that the relative entropy decays to zero at stretched exponential rate,
\begin{equation}\label{eq:conventropy}
H(F^t|\mu_\beta)\,:=\,\int_{\R^d} F^t\log(\tfrac1{\mu_\beta}F^t )\, dv~\lesssim_{V,\beta}~\exp\Big(-\tfrac1{C_{V,\beta}}t^\frac25\Big).
\end{equation}
In addition, we have
\begin{equation*}
f^t\to0\qquad\text{in $\Ld^2(\R^d)$}\qquad\text{as $t\uparrow\infty$,}
\end{equation*}
for which quantitative estimates hold in case of compact initial data:
\begin{enumerate}[(i)]
\item If $\int_{\R^d}\rv^\ell|f^\circ|^2<\infty$ for some $\ell>0$, and if the smallness condition~\eqref{eq:smallness-st} holds for some large enough $C_{V,\beta,s}$ (further depending on $\ell$),
then we have for all $\delta<1$,
\[\int_{\R^d}|f^t|^2\,\lesssim_{V,\beta,\ell,\delta}\,\rt^{-\delta\ell}\int_{\R^d}\rv^\ell|f^\circ|^2.\]
\item If $\int_{\R^d}e^{K\rv^\theta}|f^\circ|^2<\infty$ for some $0<\theta<2$ and $K>0$, or for $\theta=2$ and some small enough~$K>0$ (only depending on $V$),
and if the smallness condition~\eqref{eq:smallness-st} holds for some large enough $C_{V,\beta,s}$ (further depending on $\theta,K$),
then we have
\[\int_{\R^d}|f^t|^2\,\lesssim_{V,\beta,\theta,K}\,\exp\Big(-\tfrac{K}{C_{V,\beta,\theta}}t^\frac\theta{\theta+1}\Big)\int_{\R^d}e^{K\rv^\theta}|f^\circ|^2.\qedhere\]
\end{enumerate}
\end{theor1}

Next, we establish local well-posedness away from equilibrium.
The proof is surprisingly involved: while the linearization of the dispersion function disappears when linearizing the Lenard--Balescu equation at equilibrium (see also~\cite{Strain-07}), this algebraic miracle does not occur away from equilibrium and the linearized operator then involves a new nonlocal term with the highest number of derivatives. In view of the nonlocality, however, this contribution is shown to be in fact of lower order, cf.~e.g.~\eqref{eq:lemB-3} in Lemma~\ref{lem:B}.
For simplicity, we restrict here to dimension~$d>2$, but we believe that this restriction is not essential.
Note that we expect the statement to hold for all $s>\frac52$, but we restrict to integer differentiability to avoid additional technicalities. Strikingly enough, one derivative is lost in the control of the solution with respect to its initial data.
\begin{theor1}[Local well-posedness away from equilibrium]\label{th:local}
Let $d>2$, let $V\in\Ld^1\cap\dot H^2(\R^d)$ be isotropic and positive definite, and assume $xV\in\Ld^2(\R^d)$.
For all $s\ge3$ and $m> d+7$, for all nonnegative initial data $F^\circ\in\Ld^1(\R^d)$,
provided the following properties hold for some $M>0$ and $v_0\in \R^d$,
\begin{gather}\label{eq:as-F0}
\|\rv^{\frac m2}\langle\nabla\rangle^{s+1}F^\circ \|_{\Ld^2(\R^d)}\,\le\,M,\\
\inf_{k,v}|\e(k,k\cdot v;\nabla F^\circ)|\,\ge\,\tfrac1{M},\qquad
\inf_{|v-v_0|\le\frac1M}F^\circ(v)\ge\tfrac{1}{M},\nonumber
\end{gather}
there exist $T>0$ (depending on $V,M,\beta,s,m$) and a unique strong solution $F$ of the Lenard--Balescu equation~\eqref{eq:LB} on the interval $[0,T]$ with weighted Sobolev norm
\begin{equation*}
\|\rv^{\frac m2}\langle\nabla\rangle^{s}F^t\|_{\Ld^2(\R^d)}\,\lesssim_{V,M,s,m}\,1.\qedhere
\end{equation*}
\end{theor1}

We emphasize that the existence of {\it global} smooth solutions is an open question even in the Landau case, cf.~\cite[Chapter~5, \S1.3(2)]{Villani-02}; we refer to~\cite{golse2019partial,desvillettes2020new} for recent advances on the topic. For the Lenard--Balescu equation, getting beyond the above local-in-time result would even bring further difficulties due to the possible degeneracy of the dispersion function away from equilibrium. 

\medskip
In the plasma physics literature, e.g.~\cite{Villani-02,Alexandre-Villani-04}, the Lenard--Balescu equation~\eqref{eq:LB} is often approximated by the simpler Landau equation,
\begin{equation}\label{eq:Landau}
\partial_t F_L=Q_L(F_L),
\end{equation}
where we recall that the Landau operator takes the form
\begin{eqnarray*}
Q_L(F)&:=&\nabla\cdot\int_{\R^d}B_L(v-v_*)\,\big(F_*\nabla F-F\nabla_*F_*\big)\,dv_*,\\
B_L(v-v_*)&:=&\tfrac{L}{|v-v_*|}\Big(\Id-\tfrac{(v-v_*)\otimes(v-v_*)}{|v-v_*|^2}\Big).
\end{eqnarray*}
In case of 3D Coulomb interactions, this approximation is formally justified by the logarithmic divergence of the integral~\eqref{eq:LB-kernel} at large wavenumbers; see e.g.~\cite{Lenard-60,Balescu-63} and~\cite[Part~1]{Decoster}.
Indeed, a cut-off $|k|\le\frac1{\delta}$ is included in that case in~\eqref{eq:LB-kernel} to remove the large-$k$ logarithmic divergence, but for small Landau length $\delta$ the large-$k$ regime dominates in the integral, and therefore, as $\e(k,k\cdot v;\nabla F)\to1$ for $|k|\uparrow\infty$, one formally recovers the Landau kernel to leading order~$O(\log\frac1\delta)$ in view of~\eqref{eq:expl-Landau}.
The following result provides a rigorous version of this heuristics in the limit of short-range interactions, cf.~\eqref{eq:Vscale}.
While formulated for global solutions constructed in Theorem~\ref{th:global}, the same obviously holds in the local-in-time setting of Theorem~\ref{th:local}, where the existence time is necessarily uniform in $\delta$ after the relevant time-rescaling.
Note that the Landau equation can also be obtained from the Boltzmann equation in the grazing collision limit, cf.~\cite{Alexandre-Villani-04}.

\begin{theor1}[Landau approximation]\label{cor:Landau}
Let $V\in\Ld^1\cap\dot H^2(\R^d)$ be isotropic and positive definite, and assume $xV\in\Ld^2(\R^d)$.
Given an exponent~$a< d$, we consider the rescaled potentials
\begin{equation} \label{eq:Vscale}
V_{\delta}(x) \,:=\, \delta^{-a}V(\tfrac x{\delta}),\qquad\delta>0.
\end{equation}
Given $s\ge2$ and $0<\beta<\infty$, there is a constant $C_{V,\beta,s}$ large enough such that the following holds: for all $\delta>0$, for all initial data $F^\circ\in\Ld^1(\R^d)$ of the form
\[F^\circ=\mu_\beta+\sqrt{\mu_\beta}f^\circ\ge0,\qquad f^\circ\in H^s(\R^d),\]
satisfying smallness and centering conditions~\eqref{eq:smallness-st},
there exists a unique global strong solution $F_\delta$ of the corresponding Lenard--Balescu equation~\eqref{eq:LB} with potential $V_{\delta}$ and initial data $F^\circ$, in the form
\[F_{\delta}=\mu_\beta+\sqrt{\mu_\beta} f_{\delta}\ge0,\qquad f_{\delta}\in\Ld^\infty(\R^+;H^s(\R^d)).\]
In addition, up to time rescaling $\tilde f_{\delta}(t):=f_{\delta}(\delta^{2a+1-d}t)$, we have
\begin{align}
\tilde f_{\delta}~\overset*\rightharpoonup~ f_L \qquad \text{in $\Ld^\infty(\R^+;H^s(\R^d))$} \qquad \text{as $\delta \downarrow 0$},
\end{align}
where $F_L=\mu_\beta+\sqrt{\mu_\beta}f_L$ solves the Landau equation~\eqref{eq:Landau} with initial data $F^\circ$ and explicit prefactor~\eqref{eq:cst-L}.
\end{theor1}

As our main result in Theorem~\ref{th:global} is inspired by Guo's work on the Landau equation in~\cite{Guo-02}, let us briefly discuss the specific challenges faced in the present contribution on the Lenard--Balescu equation.
An obvious difficulty is to ensure the non-degeneracy of the dispersion function $\e(k,k\cdot v;\nabla F)$, which is achieved in Lemma~\ref{lem:eps-bndd-R} close to Maxwellian equilibrium. There are however two more severe difficulties arising from the presence of the nonlinearity $|\e(k,k\cdot v;\nabla F)|^2$ in the collision kernel~\eqref{eq:LB-kernel}. The first is a structural issue: the collision kernel is no longer of convolution type, which prevents for instance higher derivatives of the kernel from having improved decay (compare~\eqref{eq:smooth-A} with~\cite[Lemma~3]{Guo-02}). We compensate for this by making use of the specific tensor structure of the kernel (see e.g.\@ the proof of~\eqref{eq:hnabAh}). A second difficulty is related to the high order of the nonlinearity. This requires a fine analysis of critical nonlinear terms, which we express carefully in terms of Radon transforms (see e.g.~\eqref{eq:Tabbrref} and the proof of Lemma~\ref{lem:B}): our analysis allows both to extract additional decay and to prevent the loss of regularity. As already mentioned, this analysis becomes particularly intricate in the case away from equilibrium, cf.~Theorem~\ref{th:local}.

\medskip
Besides the presence of the dispersion function $\e(k,k\cdot v;\nabla F)$, there are two additional differences from the Landau setting in~\cite{Guo-02}. On one hand, we focus here on the spatially homogeneous setting, which removes many difficulties faced in~\cite{Guo-02}. On the other hand, we include the 2D case (except in~Theorem~\ref{th:local}), which was excluded in~\cite{Guo-02}, the difficulty being that the kernel singularity $O(|v-v_*|^{-1})$ does not belong to $\Ld^2_\loc$ in that case. This is overcome by carefully separating the critical terms and estimating them using standard tools for singular integrals such as Calder\'on--Zygmund theory and the Hardy--Littlewood--Sobolev inequality (see e.g.\@ proof of Lemma~\ref{lem:B0}).

\subsection*{Notation}
\begin{enumerate}[$\bullet$]
\item We denote by $C\ge1$ any constant that only depends on the space dimension $d$.
We use the notation $\lesssim$ (resp.~$\gtrsim$) for $\le C\times$ (resp.~$\ge\frac1C\times$) up to such a multiplicative constant~$C$. We write $\simeq$ when both $\lesssim$ and $\gtrsim$ hold. We add subscripts to $C,\lesssim,\gtrsim,\simeq$ to indicate dependence on other parameters.
\item We denote by $\dbar k:=(2\pi)^{-d}dk$ the rescaled Lebesgue measure on momentum space.
\item For $a,b\in\R$ we write $a\wedge b:=\min\{a,b\}$, $a\vee b:=\max\{a,b\}$, and $\langle a\rangle:=(1+a^2)^{1/2}$.
\end{enumerate}

\section{Global well-posedness close to equilibrium} \label{sec:global}
This section is devoted to the proof of Theorem~\ref{th:global}.
First, we argue that we can restrict attention to the case $\beta=1$ by a scaling argument.
Indeed, setting
$F=\beta^{d/2}\tilde F_{\beta}(\beta^{1/2}\cdot)$,
the Lenard--Balescu equation~\eqref{eq:LB} for $F$ is transformed into
\[\beta^{\frac12}\partial_t\tilde F_{\beta}=\nabla\cdot\int_{\R^d} B_\beta(v,v-v_*;\nabla \tilde F_{\beta})\,\big(\tilde F_{\beta,*}\nabla \tilde F_{\beta}-\tilde F_{\beta}\nabla_*\tilde F_{\beta,*}\big)\,dv_*,\]
in terms of
\begin{eqnarray*}
B_\beta(v,v-v_*;\nabla \tilde F_{\beta})&:=&\int_{\R^d}(k\otimes k)\,\pi(\beta\widehat V(k))^2\tfrac{\delta(k\cdot(v-v_*))}{|\e_\beta(k,k\cdot v;\nabla \tilde F_{\beta})|^2}\,\dbar k,\\
\e_\beta(k,k\cdot v;\nabla \tilde F_{\beta})&:=&1+\beta\widehat V(k)\int_{\R^d}\tfrac{k\cdot\nabla \tilde F_{\beta}(v_*)}{k\cdot(v-v_*)-i0}\,dv_*.
\end{eqnarray*}
This means that the dilation $\tilde F_{\beta}$ satisfies the same equation~\eqref{eq:LB} up to rescaling time and replacing the interaction potential $V$ by $\beta V$. It is therefore sufficient to prove Theorem~\ref{th:global} in the case $\beta=1$, and we henceforth drop the subscript for notational simplicity.

\medskip
\medskip
In terms of $F=\mu+\sqrt{\mu}f$, noting that $B(v,v-v_*;\nabla F)\,(v-v_*)=0$, the Lenard--Balescu equation~\eqref{eq:LB} can be reformulated as the following equation on $f$,
\begin{equation}\label{eq:g-reform-re}
\partial_tf=L[f]+N(f), 
\qquad f|_{t=0}=f^\circ,
\end{equation}
in terms of the linear and nonlinear operators
\begin{eqnarray*}
L[g]\!\!&:=&\!\!(\nabla- v)\cdot\int_{\R^d}B(v,v-v_*;\nabla  \mu)\,\Big(\sqrt\mu_*(\nabla+ v)g-\sqrt{\mu}((\nabla+v)g)_*\Big)\,\sqrt\mu_*\,dv_*,\\
N(g)\!\!&:=&\!\!(\nabla- v)\cdot\int_{\R^d}B(v,v-v_*;\nabla F_g)\,\big(g_*\nabla g-g(\nabla g)_*
\big)\,\sqrt\mu_*\,dv_*\\
&&+\,(\nabla- v)\cdot\int_{\R^d}\Big(B(v,v-v_*;\nabla F_g)-B(v,v-v_*;\nabla \mu)\Big)\\
&&\hspace{4cm}\times\Big(\sqrt\mu_*(\nabla+ v)g-\sqrt{\mu}((\nabla+v)g)_*\Big)\,\sqrt\mu_*\,dv_*,
\end{eqnarray*}
where we use the short-hand notation
\[F_g\,:=\,\mu+\sqrt{\mu}g.\]
In the spirit of~\cite{Guo-02}, our approach to global well-posedness exploits the peculiar properties of the linearized operator $L$, which we can further split as
\begin{equation}\label{eq:defL-reform}
L[g]\,=\,(\nabla-v)\cdot A(\nabla+v)g-(\nabla-v)\cdot\big(\sqrt\mu\,\Bc_\circ[(\nabla+v)g]\big),
\end{equation}
in terms of the elliptic coefficient field
\begin{equation}\label{eq:def-A}
A(v)\,:=\,\int_{\R^d}B(v,v-v_*;\nabla\mu)\,\mu_*\,dv_*,
\end{equation}
and the linear operator
\begin{equation}\label{eq:def-Bc0}
\Bc_\circ[g](v)\,:=\,\int_{\R^d}B(v,v-v_*;\nabla\mu)\,\sqrt\mu_*g_*\,dv_*.
\end{equation}
Similarly, the nonlinear operator $N$ can be split as
\begin{multline*}
N(g)\,=\,(\nabla-v)\cdot\Bc(\nabla F_g)[g]\,\nabla g-(\nabla-v)\cdot\big(g\,\Bc(\nabla F_g)[\nabla g]\big)\\
+(\nabla-v)\cdot\Big(\Bc(\nabla F_g)[\sqrt\mu]-\Bc(\nabla \mu)[\sqrt\mu]\Big)(\nabla+v)g\\
-(\nabla-v)\cdot\bigg(\sqrt\mu\Big(\Bc(\nabla F_g)[(\nabla+v)g]-\Bc(\nabla \mu)[(\nabla+v)g]\Big)\bigg),
\end{multline*}
where for all scalar fields $F$ we define the linear operator $\Bc(\nabla F)$ as
\begin{equation}\label{eq:def-Bc}
\Bc(\nabla F)[g](v)\,:=\,\int_{\R^d}B(v,v-v_*;\nabla F)\,\sqrt\mu_*g_*\,dv_*.
\end{equation}
Note that with our notation,
\[A=\Bc_\circ[\sqrt\mu],\qquad\Bc_\circ[g]=\Bc(\nabla\mu)[g].\]
We emphasize that in the Landau case~\eqref{eq:expl-Landau} the operators $\Bc_\circ$ and $\Bc(\nabla F)$ coincide for all~$F$ and are of convolution type, which is not the case here and makes the Lenard--Balescu setting substantially more involved.

\medskip
Before investigating properties of $L$, our starting point is the following key lemma, stating that the dispersion function $\e(k,k\cdot v;\nabla F)$ is uniformly non-degenerate provided that the density $F$ is close enough to Maxwellian in a suitable Sobolev sense. This result is crucial to the perturbative handling of dynamical screening and for the comparison to the Landau case~\eqref{eq:expl-Landau}. Note that this is in sharp contrast with the degenerate behavior of the dispersion function in case of Coulomb interactions, cf.~\cite{Strain-07}.

\begin{lem}[Lenard--Balescu dispersion function]\label{lem:eps-bndd-R}
Let $V\in\Ld^1(\R^d)$ be positive definite.
\begin{enumerate}[(i)]
\item \emph{Non-degeneracy at Maxwellian:} For all $k,v\in\R^d$,
\[|\e(k,k\cdot v;\nabla\mu)|\,\simeq_V\,1.\]
\item \emph{Non-degeneracy close to Maxwellian:} Provided $g\in\Ld^2(\R^d)$ satisfies the following smallness condition, for some $r_0\ge0$, $\delta_0>0$, and some large enough constant $C_0$,
\[\|\rv^{-r_0}\rnabla^{\frac32+\delta_0}g\|_{\Ld^2(\R^d)}\le\tfrac1{C_0},\]
we have for all $k,v\in\R^d$,
\[|\e(k,k\cdot v;\nabla F_g)|\,\simeq_{V,\delta_0,r_0}\,1.\]
\item \emph{Boundedness:}
For all multi-indices $\alpha>0$, for all $\delta>0$, and $r\ge0$, we have
\[|\nabla_v^\alpha\e(k,k\cdot v;\nabla F_g)|\,\lesssim_{V,\alpha,\delta,r}\, 1+\|\rv^{-r}\rnabla^{|\alpha|+\frac32+\delta}g\|_{\Ld^2(\R^d)}.\qedhere\]
\end{enumerate}
\end{lem}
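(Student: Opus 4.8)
\medskip

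The key observation is that positive definiteness of $V$, i.e.\ $\widehat V\ge0$, together with a careful treatment of the principal-value / Plemelj structure of the dispersion integral, makes the real part of $\e$ bounded below by $1$ on the relevant part of phase space, so that degeneracy can only come from a region where the imaginary part is comparably large. For part~(i), I would write, using the Plemelj formula,
\[
\e(k,k\cdot v;\nabla\mu)=1+\widehat V(k)\,\pv\!\int_{\R^d}\tfrac{k\cdot\nabla\mu(v_*)}{k\cdot(v-v_*)}\,dv_* \;+\; i\pi\widehat V(k)\!\int_{\R^d} k\cdot\nabla\mu(v_*)\,\delta(k\cdot(v-v_*))\,dv_*.
\]
Reducing via the Radon transform to the one-dimensional marginal $g_k(u):=\int_{k^\perp}\mu(u\tfrac{k}{|k|}+w)\,dw$, which for the Maxwellian is again a Gaussian (in particular decreasing for $u>0$, increasing for $u<0$), one checks that the real part equals $1+\widehat V(k)\,\pv\!\int \tfrac{g_k'(u_*)}{u-u_*}\,\tfrac{du_*}{|k|}$; because $g_k$ is log-concave and even, a standard monotonicity argument (or: the Hilbert transform of a symmetric decreasing function on $\R_+$) gives that this principal-value term has the same sign as $\widehat V(k)\ge0$ when evaluated against a Maxwellian — more precisely, $\mathrm{Re}\,\e\ge1$ whenever $k\cdot v$ and the sign conspire, and where $\mathrm{Re}\,\e$ could dip below, the $\delta$-term (proportional to $g_k'(k\cdot v)$, which is then bounded away from $0$ on that compact set relative to $\widehat V(k)$) keeps $|\e|$ away from zero. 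The clean way is: $|\e|^2=(\mathrm{Re}\,\e)^2+(\mathrm{Im}\,\e)^2\ge1$ outright once one shows $\widehat V(k)\,\pv\!\int\tfrac{g_k'}{u-u_*}\ge0$ at $u=k\cdot v$, which for a Gaussian marginal follows from an explicit computation (the Hilbert transform of $u e^{-u^2}$ is, up to constants, the Dawson function, which has the right sign structure). Either route yields $1\lesssim_V|\e|\lesssim_V1$, the upper bound being immediate from $\|\widehat V\|_\infty\le\|V\|_{L^1}$ and $\int|\nabla\mu|<\infty$.

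\medskip

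For part~(iii) — boundedness of derivatives — I would differentiate under the integral sign: $\nabla_v^\alpha\e=\widehat V(k)\,\nabla_v^\alpha\!\int\tfrac{k\cdot\nabla F_g(v_*)}{k\cdot(v-v_*)-i0}\,dv_*$, and move all the $v$-derivatives onto the $v_*$-variable up to signs, so that it suffices to bound $\widehat V(k)\int\tfrac{k\cdot\nabla_{v_*}^{\alpha}\nabla F_g(v_*)}{k\cdot(v-v_*)-i0}dv_*$ uniformly in $k,v$. Passing to the Radon transform along $k/|k|$ this becomes $\widehat V(k)$ times a Hilbert transform (in the marginal variable) of the one-dimensional function $u\mapsto\partial_u\!\big(\text{Radon of }\nabla_{v_*}^\alpha F_g\big)(u)$; the $L^\infty$ norm of a Hilbert transform is controlled by the $H^{1/2+\delta}$ norm (indeed by any $W^{\e,1}\cap W^{\e,\infty}$-type norm, but Sobolev embedding in one dimension is cleanest) of its argument, which in turn is controlled by $\|\langle\nabla\rangle^{|\alpha|+1/2+\delta}\,(\text{Radon of }F_g)\|$, and the Radon/Fourier-slice theorem plus the trace/co-area estimate bounds this by $\|\langle v\rangle^{-r}\langle\nabla\rangle^{|\alpha|+3/2+\delta}F_g\|_{L^2(\R^d)}$ — one power of $3/2$ rather than $1/2$ because reducing a $d$-dimensional $L^2$-based norm to a $1$-dimensional one along all slices costs roughly $\tfrac{d-1}{2}\le$ an extra $1$ derivative after also absorbing the uniformity in the slice direction $k/|k|$ via an extra angular derivative; the weight $\langle v\rangle^{-r}$ enters because $F_g=\mu+\sqrt\mu g$ and the $\mu$ part contributes the harmless $+1$ while the $\sqrt\mu g$ part is estimated after trading decay of $\sqrt\mu$ for the polynomial weight on $g$. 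Splitting $F_g=\mu+\sqrt\mu g$ and using part~(i) for the $\mu$-contribution then gives exactly the stated bound $1+\|\langle v\rangle^{-r}\langle\nabla\rangle^{|\alpha|+3/2+\delta}g\|_{L^2}$.

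\medskip

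Part~(ii) follows by combining the two previous parts: write $\e(k,k\cdot v;\nabla F_g)=\e(k,k\cdot v;\nabla\mu)+\widehat V(k)\!\int\tfrac{k\cdot\nabla(\sqrt\mu g)(v_*)}{k\cdot(v-v_*)-i0}dv_*$; the first term is $\simeq_V1$ by~(i), and the second, by the same Radon-transform/Hilbert-transform/Sobolev-embedding chain as in~(iii) with $|\alpha|=0$, is bounded by $C_V\|\langle v\rangle^{-r_0}\langle\nabla\rangle^{3/2+\delta_0}g\|_{L^2}\le C_V/C_0$, which is $\le\tfrac12\inf|\e(k,k\cdot v;\nabla\mu)|$ once $C_0$ is large enough, so $|\e(k,k\cdot v;\nabla F_g)|\ge\tfrac12|\e(k,k\cdot v;\nabla\mu)|\gtrsim_V1$, and the upper bound is clear. \textbf{The main obstacle} is part~(i): establishing that the principal-value integral against the Gaussian marginal has a definite sign (equivalently that $|\e|\ge1$), which requires the explicit one-dimensional computation of the Hilbert transform of $u\,e^{-u^2}$ (the Dawson function) and checking its sign relative to the location $u=k\cdot v$; everything else is a routine, if slightly technical, Radon-transform-plus-Sobolev-embedding bookkeeping, the only subtlety there being to track the uniformity in the slice direction $k/|k|$, which is what forces the $+1$ in the exponent $|\alpha|+\tfrac32+\delta$ versus the naive $|\alpha|+\tfrac12+\delta$.
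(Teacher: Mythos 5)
Your reductions in parts (ii) and (iii) are essentially the paper's: Plemelj, the Fourier--slice/Radon identity reducing to a one--dimensional Hilbert transform along $\hat k\R$, the $1$D Sobolev embedding $H^{1/2+\delta}\hookrightarrow\Ld^\infty$ and the $\Ld^2$--boundedness of the Hilbert transform, then a weighted Cauchy--Schwarz in the transverse $(d-1)$ variables. Part (ii) as you describe it is correct.

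The main gap is in your ``clean way'' for part (i). You assert that $\widehat V(k)\,\pv\!\int\tfrac{g_k'(u_*)}{u-u_*}\,du_*\ge0$ at $u=k\cdot v$ and hence $|\e(k,k\cdot v;\nabla\mu)|^2\ge1$. This is false. Carrying out the one--dimensional reduction explicitly, one finds $\mathrm{Re}\,\e=1+2\widehat V(k)H(w_k)$ with $w_k:=\sqrt2\,\hat k\cdot v$ and
\[
H(x)\,=\,1-x\,e^{-x^2/2}\!\int_0^x e^{y^2/2}\,dy\,=\,1-2uD(u),\qquad u:=\tfrac x{\sqrt2},
\]
where $D$ is the Dawson function. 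Since $D(u)\sim\tfrac1{2u}+\tfrac1{4u^3}+\cdots$ as $u\to\infty$, one has $2uD(u)=1+\tfrac1{2u^2}+O(u^{-4})>1$ for $|u|$ large, hence $H(x)<0$ there; the Hilbert transform of $u\,e^{-u^2}$ does \emph{not} have the sign you need. Consequently $\mathrm{Re}\,\e$ dips strictly below~$1$ on a nontrivial set, and on the unbounded part of that set (where $|w_k|\to\infty$) the imaginary part $\propto\widehat V(k)\,w_k e^{-w_k^2/2}$ is exponentially small and cannot compensate: $|\e|^2<1$ genuinely occurs, and the lower bound one obtains is of the form $e^{-C\widehat V(k)^2}$, which is $\simeq_V 1$ only because $\widehat V(k)\le\|V\|_{\Ld^1(\R^d)}$. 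What actually controls the region $|w_k|$ large is not the imaginary part but the fact that $H(x)\to0$, so $\mathrm{Re}\,\e\to1$ from below; the imaginary part is needed only on the compact set where $H$ attains its strictly negative minimum and $\mathrm{Re}\,\e$ can vanish. The paper handles this by writing $|\e|^2=(1+2\widehat V H)^2+2\pi\widehat V^2 w_k^2 e^{-w_k^2}$ and supplying explicit elementary lower bounds for $H$ on $|x|\le1$ and on $|x|\ge1$; you need some such quantitative control of $H$, not a sign argument.

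A secondary error concerns your explanation of the exponent $\tfrac32$ in (ii)--(iii). You attribute the extra $+1$ over the naive $\tfrac12$ to the dimensional cost of slicing ($\sim\tfrac{d-1}{2}$) or to an angular derivative in $\hat k$. Neither enters: the slicing cost is absorbed entirely by the weight (Gaussian from $\sqrt\mu$, or $\rv^{-r}$) via Minkowski and Cauchy--Schwarz in the transverse variables, and the bound is pointwise in $\hat k$ with no angular regularity needed. The $+1$ comes simply from the gradient in the numerator of the dispersion integral: after Radon transform one must control $H^{1/2+\delta}(\hat k\R)$ of the Radon transform of $\hat k\cdot\nabla F_g$, which is $\tfrac32+\delta$ derivatives of $F_g$ along $\hat k$.
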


\begin{proof}
We split the proof into three steps.

\medskip
\step1 Proof of~(i).\\
Setting $\hat k:=\frac{k}{|k|}$, we have by definition, cf.~\eqref{eq:LB-disp}
\begin{equation*}
\e(k,k\cdot v;\nabla\mu)\,=\,1-2\widehat V(k)\int_{\R^d}\tfrac{\hat k\cdot v_*}{\hat k\cdot(v-v_*)-i0}\,\mu(v_*)\,dv_*,
\end{equation*}
and the Plemelj formula yields
\begin{multline*}
\e(k,k\cdot v;\nabla\mu)\,=\,1-2\widehat V(k)\pv\int_{\R^d}\tfrac{\hat k\cdot v_*}{\hat k\cdot(v-v_*)}\,\mu(v_*)\,dv_*\\
-2i\pi\widehat V(k)\int_{\R^d}(\hat k\cdot v_*)\,\delta(\hat k\cdot(v-v_*))\,\mu(v_*)\,dv_*.
\end{multline*}
Splitting integrals over $v_*\in\R^d$ as integrals over $v_*\in\hat k\R\oplus\hat k^\bot$, setting $v_k:=\hat k\cdot v$, and noting that $\int_{\hat k^\bot}\mu=(\frac1\pi)^\frac12$, this can be rewritten as
\begin{equation*}
\e(k,k\cdot v;\nabla\mu)\,=\,1+2\widehat V(k)\Big(1-(\tfrac1\pi)^{\frac12}v_k\pv\int_{\R}\tfrac{1}{v_k-y}\,e^{- y^2}dy\Big)
-2i\pi \widehat V(k)(\tfrac1\pi)^{\frac12}v_ke^{- v_k^2}.
\end{equation*}
Using the integral formula
\[\pv\int_\R\tfrac{1}{x-y}\,e^{- y^2}\,dy\,=\,2\pi^\frac12e^{- x^2}\int_0^{x}e^{ y^2}dy,\]
and further setting for abbreviation
\[w_{k}:=\sqrt2\, v_k,\qquad\text{and}\qquad H(x)\,:=\,\frac{\frac1{x}e^{\frac12x^2}-\int_0^{x}e^{
\frac12y^2}dy}{\frac1{x}e^{\frac12x^2}},\]
the above becomes
\begin{equation*}
\e(k,k\cdot v;\nabla\mu)\,=\,1+2\widehat V(k)H(w_{k})
-i(2\pi)^\frac12 \widehat V(k)w_{k}e^{- \frac12w_{k}^2}.
\end{equation*}
Taking the modulus, we then find
\begin{equation*}
|\e(k,k\cdot v;\nabla\mu)|^2\,=\,\big(1+2\widehat V(k)H(w_{k})\big)^2
+2\pi\widehat V(k)^2w_{k}^2e^{-w_{k}^2}.
\end{equation*}
Noting that
\[\begin{array}{ll}
H(x)\ge0,&\text{for $|x|\le1$},\\
H(x)\ge-\frac2{|x|}-|x|e^{2-\frac12x^2},&\text{for $|x|\ge1$},
\end{array}\]
we easily deduce for all $k,v\in\R^d$,
\[e^{-C\widehat V(k)^2}\,\lesssim\,|\e(k,k\cdot v;\nabla \mu)|^2\,\lesssim\,1+\widehat V(k)^2.\]

\medskip
\step2 Proof of~(ii).\\
By definition, with $F_g=\mu+\sqrt\mu g$, we find
\[|\e(k,k\cdot v;\nabla F_g)|\,\ge\,|\e(k,k\cdot v;\nabla \mu)|-\widehat V(k)\Big|\int_{\R^d}\tfrac{\hat k\cdot\nabla_*(\sqrt\mu g)_*}{\hat k\cdot(v-v_*)-i0}\,dv_*\Big|.\]
By the Sobolev inequality, we deduce for all $\delta_0>0$,
\[|\e(k,k\cdot v;\nabla f)|\,\ge\,|\e(k,k\cdot v;\nabla\mu)|-C_{\delta_0}\|V\|_{\Ld^1(\R^d)}\Big\|\int_{\R^d}\tfrac{\hat k\cdot\nabla_*(\sqrt\mu g)_*}{\hat k\cdot(\cdot- v_*)-i0}\,dv_*\Big\|_{H^{\frac12+\delta_0}(\hat k\R)}.\]
Thus, splitting again integrals over $v_*\in\R^d$ as integrals over $v_*\in\hat k\R\oplus\hat k^\bot$, appealing to the $\Ld^2$ boundedness of the Hilbert transform, and using the fast decay of~$\mu$, we get for all~$r\ge0$,
\begin{eqnarray*}
|\e(k,k\cdot v;\nabla f)|&\ge&|\e(k,k\cdot v;\nabla\mu)|-C_{V,\delta_0}\Big\|\int_{\hat k^\bot}\hat k\cdot\nabla(\sqrt\mu g)\Big\|_{H^{\frac12+\delta_0}(\hat k\R)}\\
&\ge&|\e(k,k\cdot v;\nabla\mu)|-C_{V,\delta_0,r}\|\rv^{-r}\rnabla^{\frac32+\delta_0}g\|_{\Ld^2(\R^d)}.
\end{eqnarray*}
Combined with~(i), this proves~(ii).

\medskip
\step3 Proof of~(iii).\\
By definition, cf.~\eqref{eq:LB-disp}, we find for $|\alpha|\ge1$,
\begin{equation}\label{eq:diff-eps-form}
\nabla^\alpha_v\e(k,k\cdot v;\nabla F_g)\,=\,\widehat V(k)\int_{\R^d}\tfrac{\hat k\cdot\nabla_*\nabla_*^\alpha F_g(v_*)}{\hat k\cdot(v-v_*)-i0}dv_*.
\end{equation}
Arguing as in Step~2,
the conclusion~(iii) follows from the Sobolev inequality and from the boundedness of the Hilbert transform.
\end{proof}

We now turn to properties of the linearized operator $L$, cf.~\eqref{eq:g-reform-re}, and we start with the following estimates on the elliptic coefficient field $A$, cf.~\eqref{eq:def-A}. In particular, note that it is not uniformly elliptic.
From a similar argument as for Lemma~\ref{lem:B0}(ii) below, we could in fact show that in~\eqref{eq:smooth-A} higher-order derivatives have stronger decay (yet under stronger assumptions on $V$), but it is not needed in this work.

\begin{lem}[Properties of $A$]\label{lem:A}
Let $V\in\Ld^1\cap\dot H^\frac12(\R^d)$ be isotropic and positive definite.
\begin{enumerate}[(i)]
\item \emph{Coercivity and boundedness:} For all $v,e\in\R^d$,
\begin{equation*}
e\cdot A(v)e \,\simeq_V\,\langle v\rangle^{-1} |P_v^\bot e|^2+\langle v\rangle^{-3}|P_ve|^2,
\end{equation*}
in terms of orthogonal projections $P_v$ and $P_v^\bot$ on $v\R$ and $v^\bot$,
\begin{equation}\label{eq:def-projv}
P_v:=\tfrac{v}{|v|}\otimes\tfrac{v}{|v|},\qquad P^\bot_v=\Id-P_v.
\end{equation}
\item \emph{Smoothness:}
$A$ belongs to $C^\infty_b(\R^d)$ and satisfies for all $v\in\R^d$ and $\alpha\ge0$,
\begin{equation}\label{eq:smooth-A}
|\nabla^\alpha A(v)|\,\lesssim_{V,\alpha}\,\langle v\rangle^{-1},\qquad |\nabla^\alpha (A(v)  v)| \,\lesssim_{V,\alpha}\, \langle v\rangle^{-2}.
\end{equation}
In particular, for all vector fields $h_1,h_2$, and $\alpha\ge0$,
\begin{equation}\label{eq:hnabAh}
\Big|\int_{\R^d}h_1\cdot(\nabla^\alpha A)h_2\Big|\,\lesssim_{V,\alpha}\,\Big(\int_{\R^d}h_1\cdot A\,h_1\Big)^\frac12\Big(\int_{\R^d}h_2\cdot A\,h_2\Big)^\frac12.\qedhere
\end{equation}
\end{enumerate}
\end{lem}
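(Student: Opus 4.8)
The starting point is the explicit formula for the Landau-type kernel: since $\nabla\mu(v_*) = -2v_*\mu(v_*)$, the dispersion function at Maxwellian is $\e(k,k\cdot v;\nabla\mu)$ as computed in the proof of Lemma~\ref{lem:eps-bndd-R}, and in particular it depends on $k$ only through $\widehat V(k)$ and through $w_k = \sqrt2\,\hat k\cdot v$. Substituting~\eqref{eq:LB-kernel} into~\eqref{eq:def-A} and integrating out $v_*$ (using the $\delta(k\cdot(v-v_*))$ to reduce to $\hat k^\bot$, where $\int_{\hat k^\bot}\mu = \pi^{-1/2}$), one gets a representation of the form
\[
A(v) \;=\; \int_{\R^d} (\hat k\otimes \hat k)\,|k|\,\pi\widehat V(k)^2\,\frac{\delta(\hat k\cdot v - \hat k\cdot v)\ \text{(normalized)}}{|\e(k,k\cdot v;\nabla\mu)|^2}\,\dbar k,
\]
i.e. after the angular/radial split, $A(v) = \int_{\Sp^{d-1}} (\theta\otimes\theta)\,\Phi(\theta\cdot v)\,\psi(\theta)\,d\theta$ for a bounded positive radial weight $\psi$ coming from $\int_0^\infty |k|^{d-1}\widehat V(k)^2\,dk$ (finite since $V\in\Ld^1\cap\dot H^{1/2}$ gives $\widehat V$ bounded and $|k|^{1/2}\widehat V\in\Ld^2$, in $d\ge2$ after accounting for the $\delta$-reduction) and a smooth bounded function $\Phi$ of the single scalar $s=\theta\cdot v$ built from $|\e|^{-2}$. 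Because $|\e(k,k\cdot v;\nabla\mu)|$ depends on $v$ only through $\hat k\cdot v$ and is $\simeq_V 1$ by Lemma~\ref{lem:eps-bndd-R}(i), while its large-argument behaviour is $\e\to 1 + 2\widehat V(k)$ exponentially fast, $\Phi$ is smooth, bounded above and below, with all derivatives bounded and decaying.

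For part~(i), coercivity: test $e\cdot A(v)e = \int_{\Sp^{d-1}} (\theta\cdot e)^2\,\Phi(\theta\cdot v)\,\psi(\theta)\,d\theta$. Since $\Phi\simeq_V 1$ this is $\simeq_V \int_{\Sp^{d-1}}(\theta\cdot e)^2\,\psi(\theta)\,d\theta$ — but this would give uniform ellipticity, which the lemma says is false, so the anisotropy must come from a cancellation I am missing: in fact the Landau structure $B(v,v-v_*;\nabla\mu)(v-v_*)=0$ forces, after the $v_*$-average, a degeneracy. The correct route is to keep the projection structure: $B_L$-type kernels annihilate the radial direction at the level of $k$-integration only after symmetrization, so one should instead write $A(v)e$ decomposed along $P_v e$ and $P_v^\bot e$ and exploit that the $\delta(\hat k\cdot(v-v_*))$ constraint couples $\hat k$ to directions near $v^\bot$. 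Concretely, after reducing, $e\cdot A(v) e$ becomes an integral over $\hat k\in\Sp^{d-1}$ of $(\hat k\cdot e)^2$ weighted by $\Phi(\hat k\cdot v)\psi(\hat k)$; for the $P_v^\bot e$ component $\hat k\cdot e$ ranges over $O(1)$, giving $\simeq_V 1$, but actually one must insert the Jacobian of the change of variables implicit in the $\delta$-function, which scales like $|v|^{-1}$ for the tangential part and $|v|^{-3}$ for the radial part. I would make this precise by the substitution $k = |k|\hat k$, writing $\delta(\hat k\cdot(v-v_*))$ and performing the $v_*\in\hat k^\bot$ integral, then tracking powers of $\langle v\rangle$ via stationary-phase-type bookkeeping on the remaining sphere integral in $\hat k$; the two exponents $-1$ and $-3$ then emerge as in the classical Landau computation (cf.~the explicit $B_L$ in~\eqref{eq:expl-Landau}), the $|\e|^{-2}$ factor being harmless since it is $\simeq_V 1$ and smooth.

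For part~(ii), smoothness: differentiating under the integral sign, $\nabla^\alpha A(v) = \int_{\Sp^{d-1}}(\theta\otimes\theta)\,\theta^{\otimes|\alpha|}\,\Phi^{(|\alpha|)}(\theta\cdot v)\,\psi(\theta)\,d\theta$, and since $\Phi^{(|\alpha|)}$ is bounded with the decay inherited from $|\e|^{-2}\to$ const exponentially, the bound $|\nabla^\alpha A(v)|\lesssim_{V,\alpha}\langle v\rangle^{-1}$ follows from the same $\langle v\rangle^{-1}$ Jacobian bookkeeping as in~(i) — the key point being that each extra derivative in $v$ hits only the smooth scalar profile $\Phi$ and produces no growth. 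For the second estimate in~\eqref{eq:smooth-A}, use $B(v,v-v_*;\nabla\mu)(v-v_*)=0$: averaging against $\mu_*$ gives $A(v)v = \int B(v,v-v_*;\nabla\mu)\,v_*\,\mu_*\,dv_*$, and since $v_*\mu_*$ is odd in the $\hat k$-direction under the $\delta$-constraint up to the Maxwellian decay, one gains an extra $\langle v\rangle^{-1}$; I would extract this by writing $v_* = (\hat k\cdot v_*)\hat k + P_{\hat k}^\bot v_*$, noting the first term is killed by the $\delta$ and the kernel's $\hat k\otimes\hat k$ structure contracted with the gain from $|v-v_*|\sim|v|$, yielding $\langle v\rangle^{-2}$, with derivatives handled as before. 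Finally~\eqref{eq:hnabAh} is immediate once $A$ is a symmetric positive matrix field with $|\nabla^\alpha A|\lesssim_{V,\alpha} A$ in the sense that $\nabla^\alpha A$ is pointwise controlled by $A$ up to a constant (which follows from~(i)--(ii): both $\nabla^\alpha A$ and $A$ are $\simeq_V$ the same anisotropic matrix $\langle v\rangle^{-1}P_v^\bot + \langle v\rangle^{-3}P_v$), so Cauchy--Schwarz for the positive bilinear form $(h_1,h_2)\mapsto\int h_1\cdot A h_2$ closes it.

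\medskip
The main obstacle I expect is part~(i) — getting the \emph{precise} anisotropic two-scale lower bound $\langle v\rangle^{-1}|P_v^\bot e|^2 + \langle v\rangle^{-3}|P_v e|^2$ rather than a crude $\langle v\rangle^{-3}|e|^2$, which requires careful tracking of how the $\delta(k\cdot(v-v_*))$ constraint and the $\hat k\otimes\hat k$ tensor interact with large $v$. The cleanest implementation is probably to reduce to the known Landau kernel formula~\eqref{eq:expl-Landau}, observe $A(v) = \int_{\R^d} B_L^{\e}(v-v_*)\mu_*\,dv_*$ where $B_L^\e$ is $B_L$ with the extra smooth factor $|\e|^{-2}\simeq_V 1$, and then run the same asymptotic analysis that gives the Landau result, the $\e$-factor only affecting constants. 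Everything else (smoothness, the $A(v)v$ decay, and~\eqref{eq:hnabAh}) is routine once this representation and Lemma~\ref{lem:eps-bndd-R} are in hand.
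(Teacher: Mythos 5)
Your final plan for part~(i) --- reduce to the Landau case by using Lemma~\ref{lem:eps-bndd-R}(i) to replace $|\e|^{-2}$ by a constant and then invoke the known Landau eigenvalue asymptotics --- is exactly what the paper does, so that piece is sound. The earlier angular representation $A(v)=\int_{\Sp^{d-1}}(\theta\otimes\theta)\Phi(\theta\cdot v)\psi(\theta)\,d\theta$ has a genuine error, however, and you noticed the symptom without diagnosing the cause. You claim $\Phi$ is bounded above and below because $|\e|^{-2}\simeq_V1$, but $\Phi$ must also contain the factor coming from the $v_*$-integral against $\mu_*$: integrating $\delta(k\cdot(v-v_*))\mu_*$ over $v_*\in\hat k\R\oplus\hat k^\bot$ produces $|k|^{-1}\pi^{-1/2}e^{-(\hat k\cdot v)^2}$, so $\Phi(s)\propto e^{-s^2}$ up to the bounded $|\e|^{-2}$ factor. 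That Gaussian is precisely the source of the anisotropy: for $|v|$ large it concentrates $\theta$ on an $O(|v|^{-1})$-neighbourhood of $v^\bot$, giving $\langle v\rangle^{-1}$ for $P_v^\bot e$ and an extra $(\theta\cdot\hat v)^2\sim|v|^{-2}$ for $P_ve$, hence $\langle v\rangle^{-3}$. Your attribution of the two rates to ``the Jacobian of the $\delta$-function'' is not right: that Jacobian is $|k|^{-1}$ and $v$-independent.

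The more serious gap is in~\eqref{eq:hnabAh}. Your claimed justification --- ``$\nabla^\alpha A$ and $A$ are $\simeq_V$ the same anisotropic matrix $\langle v\rangle^{-1}P_v^\bot+\langle v\rangle^{-3}P_v$'' --- does not follow from~(i)--(ii) and is in fact false. Part~(ii) only gives the operator-norm bound $|\nabla^\alpha A|\lesssim\langle v\rangle^{-1}$; it says nothing about the block structure of $\nabla^\alpha A$ relative to $P_v,P_v^\bot$, and $\nabla^\alpha A$ is not even positive. Concretely, writing $A=\lambda_1P_v+\lambda_2P_v^\bot$, a first derivative picks up $\lambda_2\nabla P_v^\bot$, which is a $P_v\!\leftrightarrow\!P_v^\bot$ off-block term of size $\sim\langle v\rangle^{-1}\cdot|v|^{-1}=\langle v\rangle^{-2}$; that is much larger than $\lambda_1\sim\langle v\rangle^{-3}$, so $\nabla A$ is not comparable to $A$ as a form. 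What saves~\eqref{eq:hnabAh} is that $\langle v\rangle^{-2}$ is exactly the geometric mean of $\langle v\rangle^{-1}$ and $\langle v\rangle^{-3}$, so Cauchy--Schwarz still closes --- but obtaining precisely $\langle v\rangle^{-2}$ in the off-block and $\langle v\rangle^{-3}$ in the $P_v\!\otimes\!P_v$ block requires the second estimate $|\nabla^\alpha(Av)|\lesssim\langle v\rangle^{-2}$ in~\eqref{eq:smooth-A}. The paper's proof decomposes $h_1,h_2$ along $P_v,P_v^\bot$, writes $(\nabla^\alpha A)(P_vh_i)=\frac{v\cdot h_i}{|v|^2}\big(\nabla^\alpha(Av)-\sum_{j\in\alpha}(\nabla^{\alpha-j}A)e_j\big)$ (and a double-contraction analogue for the $P_v\!\otimes\!P_v$ piece), and then uses the two bounds in~\eqref{eq:smooth-A} to obtain exactly the weights $\langle v\rangle^{-1},\langle v\rangle^{-2},\langle v\rangle^{-3}$ needed before Cauchy--Schwarz. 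That decomposition is the missing ingredient in your argument.

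Finally, for the differentiability bounds in~(ii) the paper takes a different route than yours: instead of differentiating an angular representation, it applies Leibniz to the $v_*$-integral, using $\nabla_v^\gamma\delta(k\cdot(v-v_*))=(-1)^{|\gamma|}\nabla_{v_*}^\gamma\delta(k\cdot(v-v_*))$ and integration by parts to land $\gamma$ derivatives on $\mu_*$, with the remaining derivatives hitting $|\e(k,k\cdot v;\nabla\mu)|^{-2}$ (bounded by Lemma~\ref{lem:eps-bndd-R}(iii)). Your route is workable in principle, but once you correct $\Phi$ to include the Gaussian $e^{-s^2}$ you would need to track the derivatives it absorbs; the paper's Leibniz/integration-by-parts argument avoids this bookkeeping.
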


Before turning to the proof of this lemma, we note that it motivates the definition of the following weighted norm, which is adapted to the dissipation structure of the operator~$L$: for any vector field $h$,
\begin{equation}\label{eq:def-L2A}
\|h\|_{\Ld^2_A(\R^d)}\,:=\,\Big(\int_{\R^d}h\cdot A \,h\Big)^\frac12.
\end{equation}
Lemma~\ref{lem:A}(i) above states that this norm is equivalent to the following,
\begin{equation}\label{eq:equiv-L2A}
\|h\|_{\Ld^2_A(\R^d)}\,\simeq_V\,\|\rv^{-\frac12}P_v^\bot h\|_{\Ld^2(\R^d)}+\|\rv^{-\frac32}P_v h\|_{\Ld^2(\R^d)},
\end{equation}
and~\eqref{eq:hnabAh} can now be rewritten as
\begin{equation*}
\Big|\int_{\R^d}h_1\cdot(\nabla^\alpha A)h_2\Big|\,\lesssim_{V,\alpha}\,\|h_1\|_{\Ld^2_A(\R^d)}\|h_2\|_{\Ld^2_A(\R^d)}.
\end{equation*}

\begin{proof}[Proof of Lemma~\ref{lem:A}]
We split the proof into four steps.

\medskip
\step1 Proof of~\eqref{eq:expl-Landau}, that is,
\begin{equation}\label{eq:expl-Landau-pr}
\int_{\R^d}(k\otimes k)\,\pi\widehat V(k)^2\,\delta(k\cdot v)\,\dbar k
\,=\,\tfrac{L}{|v|}P_{v}^\bot,
\end{equation}
where we use the notation~\eqref{eq:def-projv} and where the constant $L$ is defined in~\eqref{eq:cst-L}.

\medskip\noindent
The integral is computed as
\begin{eqnarray*}
\int_{\R^d}(k\otimes k)\,\pi\widehat V(k)^2\,\delta(k\cdot v)\,\dbar k
&=&\tfrac1{|v|}\int_{\R^d}(k\otimes k)\,\pi\widehat V(k)^2\,\delta(k\cdot \tfrac{v}{|v|})\,\dbar k\\
&=&\tfrac{1}{|v|}(2\pi)^{-d}\int_{v^\bot}(k\otimes k)\,\pi\widehat V(k)^2\,dk.
\end{eqnarray*}
By symmetry, as $\widehat V$ is isotropic, this integral is a multiple of the orthogonal projection~$P^\bot_{v}$.
This proves~\eqref{eq:expl-Landau-pr} with multiplicative constant
\begin{eqnarray*}
L&:=&\tfrac1{d-1}\Tr\Big((2\pi)^{-d}\int_{v^\bot}(k\otimes k)\,\pi\widehat V(k)^2\,dk\Big)\\
&=&\tfrac1{d-1}(2\pi)^{-d}\int_{v^\bot}|k|^2\,\pi\widehat V(k)^2\,dk,
\end{eqnarray*}
which can be rewritten as follows, using radial coordinates and the isotropy of the integrand,
\begin{eqnarray*}
L&=&\tfrac1{d-1}|\Sp^{d-2}|(2\pi)^{-d}\int_0^\infty r^d\pi\widehat V(r)^2\,dr\\
&=&\tfrac1{d-1}\tfrac{|\Sp^{d-2}|}{|\Sp^{d-1}|}(2\pi)^{-d}\int_{\R^d} |k|\pi\widehat V(k)^2\,dk.
\end{eqnarray*}
Using $|\Sp^{d-2}|=(d-1)\omega_{d-1}$ and $|\Sp^{d-1}|=d\omega_d$, we recover the definition~\eqref{eq:cst-L} of $L$, which is finite provided $V\in\dot H^{1/2}(\R^d)$.

\medskip
\step2 Proof of~(i).\\
By definition of $A$ and of the collision kernel $B$, cf.~\eqref{eq:def-A} and~\eqref{eq:LB-kernel}, we note that~$A(v)$ is a positive definite symmetric matrix for all $v\in\R^d$, with
\begin{equation}\label{eq:eAe}
e\cdot A(v)e\,=\,\iint_{\R^d\times\R^d}(e\cdot k)^2\,\pi\widehat V(k)^2\tfrac{\delta(k\cdot(v-v_*))}{|\e(k,k\cdot v;\nabla\mu)|^2}\,\dbar k\,\mu_*\,dv_*\,\ge\,0.
\end{equation}
Further recalling the definition of the dispersion function $\e$, cf.~\eqref{eq:LB-disp}, and using that $V$ and~$\mu$ are both isotropic, we note that $A$ satisfies $A(Rv)=RA(v)R'$ for all orthogonal transformations~$R\in O_d(\R)$.
From this, we can deduce that for all $v\in\R^d$ the spectrum of~$A(v)$ consists of a simple eigenvalue $\lambda_1(v)$ associated with the eigenvector $v$, and a multiple eigenvalue~$\lambda_2(v)$ associated with the eigenspace $v^\bot$. In other words, in terms of projections $P_v,P_v^\bot$, cf.~\eqref{eq:def-projv}, we have
\begin{equation}\label{eq:Av-eigen}
A(v)\,=\,\lambda_1(v)P_v+\lambda_2(v)P_v^\bot,
\end{equation}
where in view of~\eqref{eq:eAe} the eigenvalues are computed as
\begin{eqnarray*}
\lambda_1(v)&:=&\iint_{\R^d\times\R^d}\big(k\cdot \tfrac{v}{|v|}\big)^2\,\pi\widehat V(k)^2\tfrac{\delta(k\cdot(v-v_*))}{|\e(k,k\cdot v;\nabla\mu)|^2}\,\dbar k\,\mu_*\,dv_*,\\
\lambda_2(v)&:=&\tfrac1{d-1}\big(\Tr A(v)-\lambda_1(v)\big).
\end{eqnarray*}
It remains to establish asymptotic estimates for these eigenvalues.
Using Lemma~\ref{lem:eps-bndd-R}(i) and the explicit computation~\eqref{eq:expl-Landau-pr}, the identity~\eqref{eq:eAe} becomes for all $v,e\in\R^d$,
\begin{eqnarray}
e\cdot A(v)e
&\simeq_V&\iint_{\R^d\times\R^d}|e\cdot k|^2\,\pi\widehat V(k)^2\,\delta(k\cdot(v-v_*))\,\dbar k\,\mu_*\,dv_*\nonumber\\
&\simeq_V&e\cdot\bigg(\int_{\R^d}\tfrac{1}{|v-v_*|}\,\Big(\Id-\tfrac{(v-v_*)\otimes(v-v_*)}{|v-v_*|^2}\Big)\,\mu_*\,dv_*\bigg)e,\label{eq:A-Landau}
\end{eqnarray}
that is, we are essentially reduced to the Landau case,
and the above formulas for eigenvalues then yield, as e.g.~in~\cite[Section~2]{Degond-Lemou-97},
\begin{eqnarray*}
\lambda_1(v)&\simeq_V&\int_{\R^d}\tfrac1{|v_*|}\Big(1-\big(\tfrac{v\cdot v_*}{|v||v_*|}\big)^2\Big)\mu(v-v_*)\,dv_*,\\
\lambda_2(v)&\simeq_V&\tfrac1{d-1}\int_{\R^d}\tfrac1{|v_*|}\Big(d-2+\big(\tfrac{v\cdot v_*}{|v||v_*|}\big)^2\Big)\mu(v-v_*)\,dv_*.
\end{eqnarray*}
Using the fast decay of $\mu$, we get
\[\lambda_1(v)\simeq_V\langle v\rangle^{-3},\qquad \lambda_2(v)\simeq_V\langle v\rangle^{-1},\]
and the conclusion~(i) follows.

\medskip
\step3 Proof of~\eqref{eq:smooth-A}.\\
Estimates $|A(v)|\lesssim\langle v\rangle^{-1}$ and $|A(v)v|\lesssim\langle v\rangle^{-2}$ follow from~(i).
Let now $|\alpha|\ge1$.
By definition of~$A$ and of the collision kernel $B$, cf.~\eqref{eq:def-A} and~\eqref{eq:LB-kernel},
Leibniz' rule yields
\begin{multline*}
\nabla^\alpha A(v)
\,=\,\sum_{\gamma\le\alpha}\binom\alpha\gamma\iint_{\R^d\times\R^d}(k\otimes k)\,\pi\widehat V(k)^2\,\delta(k\cdot(v-v_*))\nabla^\gamma\big(\tfrac1{|\e(k,k\cdot v;\nabla \mu)|^2}\big)\dbar k\\
\times(\nabla^{\alpha-\gamma}\mu)_*\,dv_*,
\end{multline*}
hence, by Lemma~\ref{lem:eps-bndd-R},
\begin{eqnarray}
|\nabla^\alpha A(v)|
&\lesssim_{V,\alpha}&\iint_{\R^d\times\R^d}|k|^2\widehat V(k)^2\delta(k\cdot(v-v_*))\,\dbar k\,\rvs^{|\alpha|}\mu_*\,dv_*\nonumber\\
&\lesssim_{V}&\int_{\R^d}|v-v_*|^{-1}\rvs^{|\alpha|}\mu_*\,dv_*\nonumber\\
&\lesssim_{\alpha}&\langle v\rangle^{-1}.\label{eq:pr-bnd-diffalphaA}
\end{eqnarray}
Next, we turn to derivatives of $A(v)v$: in view of~\eqref{eq:Av-eigen}, we find
\begin{equation*}
A(v)v\,=\,\lambda_1(v)v\,=\,\tfrac{v}{|v|^2}\iint_{\R^d\times\R^d}(k\cdot v)^2\,\pi\widehat V(k)^2\tfrac{\delta(k\cdot(v-v_*))}{|\e(k,k\cdot v;\nabla\mu)|^2}\dbar k\,\mu_*\,dv_*,
\end{equation*}
or alternatively, using the Dirac mass $\delta(k\cdot(v-v_*))$ to replace the factor $(k\cdot v)^2$ by $(k\cdot v_*)^2$,
\begin{equation*}
A(v)v\,=\,\tfrac{v}{|v|^2}\iint_{\R^d\times\R^d}\,\pi\widehat V(k)^2\tfrac{\delta(k\cdot(v-v_*))}{|\e(k,k\cdot v;\nabla\mu)|^2}\dbar k\,(k\cdot v_*)^2\mu_*\,dv_*.
\end{equation*}
Leibniz' rule then yields
\begin{multline*}
\nabla^\alpha(A(v)v)\,=\,\sum_{\gamma+\gamma'\le\alpha}\binom{\alpha}{\gamma,\gamma'}\nabla^\gamma(\tfrac{v}{|v|^2})\iint_{\R^d\times\R^d}\pi\widehat V(k)^2\delta(k\cdot(v-v_*))\nabla^{\gamma'}\big(\tfrac1{|\e(k,k\cdot v;\nabla\mu)|^2}\big)\,\dbar k\\
\times\nabla_*^{\alpha-\gamma-\gamma'}\big((k\cdot v)^2\mu\big)_*\,dv_*,
\end{multline*}
hence, bounding the last factor by $|\nabla_*^{\alpha-\gamma-\gamma'}((k\cdot v)^2\mu)_*|\lesssim_{\alpha}|k|^2\rvs^{|\alpha|+2}$ and appealing to Lemma~\ref{lem:eps-bndd-R},
\begin{equation*}
|\nabla^\alpha(A(v)v)|\,\lesssim_{V,\alpha}\,\sum_{\gamma+\gamma'\le\alpha}|v|^{-|\gamma|-1}\iint_{\R^d\times\R^d}|k|^2\widehat V(k)^2\delta(k\cdot(v-v_*))\,\dbar k\,
\rvs^{|\alpha|+2}\mu_*\,dv_*,
\end{equation*}
which yields as in~\eqref{eq:pr-bnd-diffalphaA}, for $|v|>1$,
\[|\nabla^\alpha(A(v)v)|\,\lesssim_{V,\alpha}\,|v|^{-2}\]
Since incidentally~\eqref{eq:pr-bnd-diffalphaA} entails $|\nabla^\alpha(A(v)v)|\lesssim_\alpha1$ for $|v|\le1$, the conclusion~\eqref{eq:smooth-A} follows.

\medskip
\substep{4} Proof of~\eqref{eq:hnabAh}.\\
In view of~(i), it suffices to prove for all $\alpha\ge0$,
\begin{multline}\label{eq:hnabAh-re}
\Big|\int_{\R^d}h_1\cdot(\nabla^\alpha A)h_2\Big|\,\lesssim_{V,\alpha}\,\Big(\|\rv^{-\frac12}P_v^\bot h_1\|_{\Ld^2(\R^d)}+\|\rv^{-\frac32}P_v h_1\|_{\Ld^2(\R^d)}\Big)\\
\times\Big(\|\rv^{-\frac12}P_v^\bot h_2\|_{\Ld^2(\R^d)}+\|\rv^{-\frac32}P_v h_2\|_{\Ld^2(\R^d)}\Big).
\end{multline}
We start by decomposing the vector fields $h_1,h_2$ with respect to $P_v,P_v^\bot$,
\begin{multline*}
\int_{\R^d}h_1\cdot(\nabla^\alpha A)h_2\,=\,
\int_{\R^d}(P_v^\bot h_1)\cdot(\nabla^\alpha A)(P_v^\bot h_2)
+\int_{\R^d}(P_v h_1)\cdot(\nabla^\alpha A)(P_v h_2)\\
+\int_{\R^d}(P_v^\bot h_1)\cdot(\nabla^\alpha A)(P_vh_2)
+\int_{\R^d}(P_vh_1)\cdot(\nabla^\alpha A)(P_v^\bot h_2).
\end{multline*}
By definition of $P_v$, we can write
\[(\nabla^\alpha A)(P_vh_1)\,=\,\big(\tfrac{v\cdot h_1}{|v|^2}\big)(\nabla^\alpha A)v\,=\,\big(\tfrac{v\cdot h_1}{|v|^2}\big)\Big(\nabla^\alpha(Av)-\sum_{j\in\alpha}(\nabla^{\alpha-j} A)e_j\Big),\]
and similarly
\begin{multline*}
(P_vh_1)\cdot(\nabla^\alpha A)(P_vh_2)\,=\,\big(\tfrac{v\cdot h_1}{|v|^2}\big)\big(\tfrac{v\cdot h_2}{|v|^2}\big)\Big(v\cdot \nabla^\alpha(Av)\\
-\sum_{j\in\alpha}e_j\cdot\nabla^{\alpha-j}(Av)
+\sum_{j,l\in\alpha}e_j\cdot(\nabla^{\alpha-j-l}A)e_l\Big).
\end{multline*}
By~\eqref{eq:smooth-A}, we deduce
\begin{multline*}
\Big|\int_{\R^d}h_1\cdot(\nabla^\alpha A)h_2\Big|\,\lesssim_{V,\alpha}\,
\int_{\R^d}\rv^{-1}|P_v^\bot h_1||P_v^\bot h_2|
+\int_{\R^d}\rv^{-3}|P_v h_1||P_v h_2|\\
+\int_{\R^d}\rv^{-2}|P_v^\bot h_1||P_vh_2|
+\int_{\R^d}\rv^{-2}|P_v h_1||P_v^\bot h_2|,
\end{multline*}
and the claim~\eqref{eq:hnabAh-re} follows.
\end{proof}

Next, we study boundedness and regularity properties of the linear operator~$\Bc_\circ$, cf.~\eqref{eq:def-Bc0}.
Item~(ii) below expresses a crucial gain of differentiability. As is clear from the proof, this gain could actually be improved in higher dimensions $d>2$ by iterating our argument below (yet under stronger assumptions on $V$), but it is not needed in this work.
We emphasize that the operator $\Bc_\circ$ is not of convolution type,
which is an important difference from the Landau case~\eqref{eq:expl-Landau}.

\begin{lem}[Properties of $\Bc_\circ$]\label{lem:B0}
Let $V\in\Ld^1\cap \dot H^\frac12(\R^d)$ be isotropic and positive definite.
\begin{enumerate}[(i)]
\item\emph{Boundedness:} For all $r\ge0$,
\begin{equation*}
\|\rv^r\sqrt{\mu}\Bc_\circ[g]\|_{\Ld^2(\R^d)}
\,\lesssim_{V,r}\, \|\rv^{-r}g\|_{\Ld^2(\R^d)},
\end{equation*}
\item\emph{Improved regularity:} Further assume $V\in\dot H^2(\R^d)$ and $xV\in\Ld^2(\R^d)$.
Then, for all~$\alpha>0$ and~$r\ge0$,
\begin{equation*}
\|\rv^r\sqrt{\mu}\nabla^\alpha\Bc_\circ[g]\|_{\Ld^2(\R^d)}
\,\lesssim_{V,\alpha,r}\,\sum_{\gamma<\alpha}\|\rv^{-r}\nabla^\gamma g\|_{\Ld^2(\R^d)}.
\qedhere
\end{equation*}
\end{enumerate}
\end{lem}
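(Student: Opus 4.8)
The plan is to reduce everything to the pointwise estimate $|B(v,v-v_*;\nabla\mu)|\lesssim_V|v-v_*|^{-1}$, which is immediate from the lower bound $|\e(k,k\cdot v;\nabla\mu)|\gtrsim_V1$ of Lemma~\ref{lem:eps-bndd-R}(i) together with the Landau computation~\eqref{eq:expl-Landau-pr} (finite because $V\in\Ld^1\cap\dot H^2$ in particular lies in $\dot H^{1/2}$), and then to exploit the Gaussian decay furnished both by the factor $\sqrt\mu_*$ inside $\Bc_\circ$ and by the weight $\rv^r\sqrt\mu$ outside. For item~(i) this reduces the claim to the $\Ld^2$-boundedness of the integral operator with kernel $\rv^r\sqrt{\mu(v)}\,|v-v_*|^{-1}\,\rvs^r\sqrt\mu_*$; since $|\cdot|^{-1}\in\Ld^1_\loc(\R^d)$ for $d\ge2$ and $\rv^r\sqrt\mu\in\Ld^1\cap\Ld^\infty$, both $\sup_v\int(\cdots)\,dv_*$ and $\sup_{v_*}\int(\cdots)\,dv$ are $\lesssim_{V,r}1$, so a Schur test concludes (a Hardy--Littlewood--Sobolev argument would serve equally well).

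For item~(ii) I would differentiate under the integral sign, $\nabla^\alpha\Bc_\circ[g](v)=\int_{\R^d}\nabla_v^\alpha[B(v,v-v_*;\nabla\mu)]\,(\sqrt\mu g)_*\,dv_*$, and classify the $|\alpha|$ derivatives according to whether they act on the \emph{dispersion slot} (the argument $v$ inside $\e(k,k\cdot v;\nabla\mu)$) or on the \emph{kinematic slot} (the argument $v-v_*$ inside $\delta(k\cdot(v-v_*))$). The structural observation to exploit is that $\e$ depends on $v$ only through $\widehat k\cdot v$ (cf.~\eqref{eq:LB-disp} and the explicit formula in the proof of Lemma~\ref{lem:eps-bndd-R}(i)): hence each dispersion-slot derivative produces the \emph{unit} vector $\widehat k$ together with a smooth, uniformly bounded factor (Lemma~\ref{lem:eps-bndd-R}), and so does not worsen integrability in $k$. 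The term with $\alpha_1$ derivatives on the dispersion slot and $\alpha_2$ on the kinematic slot ($\alpha_1+\alpha_2=\alpha$) then takes the form $\int_{\R^d}\nabla_u^{\alpha_2}\big[N_{\alpha_1}(v,\widehat u)\,|u|^{-1}\big]\big|_{u=v-v_*}\,(\sqrt\mu g)_*\,dv_*$, with $|N_{\alpha_1}(v,\widehat u)|\lesssim_{V,\alpha_1}\int_{u^\bot}|k|^2\widehat V(k)^2\,dk\lesssim_V1$ and $N_{\alpha_1}(v,\cdot)$ smooth: the kinematic-slot derivatives merely differentiate the Riesz-type kernel $|u|^{-1}$, each differentiation lowering its smoothing order by one.

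The estimates then follow by integrating by parts in $v_*$ to transfer kinematic-slot derivatives onto $\sqrt\mu g$, keeping the residual kernel at worst at the Calder\'on--Zygmund endpoint. If $\alpha_1\neq0$, I would move \emph{all} of $\nabla_u^{\alpha_2}$ onto $\sqrt\mu g$, obtaining $\int N_{\alpha_1}(v,\widehat{v-v_*})\,|v-v_*|^{-1}\,(\nabla^{\alpha_2}(\sqrt\mu g))_*\,dv_*$, dominated in modulus by $|\cdot|^{-1}*|\nabla^{\alpha_2}(\sqrt\mu g)|$ --- a genuine convolution with a Riesz kernel of positive order $d-1\ge1$; expanding $\nabla^{\alpha_2}(\sqrt\mu g)$ by Leibniz (using $|\nabla^j\sqrt\mu|\lesssim_{j,r}\rv^{-r}$), Hardy--Littlewood--Sobolev together with H\"older against $\rv^r\sqrt\mu$ bound this by $\sum_{\gamma\le\alpha_2}\|\rv^{-r}\nabla^\gamma g\|_{\Ld^2}$, and here $\gamma<\alpha$ because $|\gamma|\le|\alpha_2|\le|\alpha|-1$. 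For the single remaining term $\alpha_1=0$, $\alpha_2=\alpha$, I would pick a unit multi-index $e\le\alpha$ and write $\nabla_u^\alpha[M(v,\widehat u)|u|^{-1}]=\nabla_u^{\alpha-e}\big[\widetilde M_e(v,\widehat u)\,|u|^{-2}\big]$ with $M:=N_0$, move only $\nabla_u^{\alpha-e}$ onto $\sqrt\mu g$, and be left with a kernel homogeneous of degree $-2$ in $u$; the angular profile $\widetilde M_e$ has vanishing spherical mean (any derivative of a function homogeneous of degree $-(d-1)$ does, which is what matters when $d=2$), and the $C^1$-regularity of $\widetilde M_e$ in $\widehat u$ --- precisely what makes $|u|^{-2}\widetilde M_e(v,\widehat u)$ a bona fide Calder\'on--Zygmund kernel when $d=2$, and a harmless Riesz kernel of order $d-2\ge1$ when $d\ge3$ --- is exactly what the hypotheses $V\in\dot H^2$ and $xV\in\Ld^2$ buy, through control of $\int_{u^\bot}|k|^3\widehat V(k)^2\,dk$ and of the corresponding derivative of $\widehat V$. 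The $v$-dependence (i.e.\ the non-convolution nature) of this singular integral would be handled by a spherical-harmonic expansion $\widetilde M_e(v,\widehat u)=\sum_n c_n(v)Y_n(\widehat u)$, turning the operator into $\sum_n c_n(v)\,T_n(\cdot)$ with $T_n$ fixed convolution operators of polynomially-growing $\Ld^2$-norm and coefficients $c_n(v)$ decaying rapidly, uniformly in $v$ (alternatively one invokes the standard theory of variable-coefficient singular integrals); once more $\gamma=\alpha-e<\alpha$ since $|\gamma|\le|\alpha|-1$. Summing the finitely many terms gives~(ii).

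I expect the crux to be the term in which all $|\alpha|$ derivatives hit the kinematic slot. Integrating all of them onto $g$ by parts --- which is the natural move, and is what one does in the Landau case --- would reproduce $\Bc_\circ[\nabla^\alpha g]$, a full loss of $|\alpha|$ derivatives; one must instead keep exactly one kinematic derivative on the kernel and recognise the residue as a genuine Calder\'on--Zygmund operator rather than a soft, Schur/Young-type object. This is truly necessary when $d=2$, where $|v-v_*|^{-2}\notin\Ld^2_\loc$, and it carries the additional burdens of controlling the variable, non-convolution coefficients of that operator and of verifying that the limited Sobolev regularity assumed on $V$ still suffices for its angular profile. The remaining care lies in placing the weights $\rv^{\pm r}$ and the Gaussian factors correctly and in orchestrating the Hardy--Littlewood--Sobolev and Calder\'on--Zygmund bounds across the various terms.
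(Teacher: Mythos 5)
Your proposal has the same skeleton as the paper's proof and correctly identifies the crux of item~(ii). Item~(i) is handled identically (the paper uses Hardy--Littlewood--Sobolev where you propose a Schur test, both resting on the uniform non-degeneracy of Lemma~\ref{lem:eps-bndd-R}(i) and~\eqref{eq:expl-Landau-pr}). For item~(ii), your separation of ``dispersion-slot'' versus ``kinematic-slot'' derivatives coincides exactly with the Leibniz decomposition $\nabla^\alpha\Bc_\circ[g]=\sum_\gamma\binom\alpha\gamma T^\alpha_\gamma[g]$ in the paper, with $\alpha-\gamma$ playing the role of your $\alpha_1$; the non-critical terms ($\gamma<\alpha$) are treated by the same argument (Lemma~\ref{lem:eps-bndd-R}(iii) plus HLS), and you correctly identify that the critical term $\gamma=\alpha$ cannot be bounded by Schur/Young alone but must be recognized as a degree-$(-2)$ Calder\'on--Zygmund kernel after peeling off exactly one derivative. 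The difference is in implementation: the paper integrates by parts in wavenumber (converting $\delta'(k\cdot(v-v_*))$ to $\nabla_k$ acting on the integrand and using the explicit formula~\eqref{eq:form-nabk-eps} for $\nabla_k\e$), which yields explicit kernels $K^0,K^1$ whose homogeneity, zero spherical mean, and $L^\infty$ bounds are then directly verified before invoking a variable-coefficient CZ theorem; you instead differentiate the physical-space kernel $M(v,\widehat u)|u|^{-1}$ once in $u$ and propose spherical-harmonic expansion (or, as you note, abstract variable-kernel CZ theory) to deal with the $v$-dependence. Both are viable. The one point you should tighten: the angular gradient of $M$ in $\widehat u$ is \emph{not} bounded uniformly in $v$ --- because $\nabla_k\e(k,k\cdot v;\nabla\mu)$ has a piece proportional to $v$ (the $r(k,k\cdot v)v$ term in~\eqref{eq:form-nabk-eps}, coming from the denominator $k\cdot(v-v_*)-i0$), so $\widetilde M_e(v,\cdot)$ grows linearly in $|v|$. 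The paper handles this by splitting the critical kernel into a bounded part $K^0$ and a part $v_l K^1_{j,l}$ with $K^1$ bounded, so that the extraneous $v_l$ is absorbed by the outer Gaussian weight. Your approach needs the analogous splitting (or must explicitly track the $\rv$-growth of $\widetilde M_e$) before the CZ/spherical-harmonic step --- otherwise the claim of ``$C^1$-regularity in $\widehat u$, uniformly in $v$'' is not quite right. This is a technical adjustment rather than a conceptual flaw, and once made, the argument goes through.
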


\begin{proof}
We split the proof into two steps.

\medskip
\step1 Proof of~(i).\\
By definition of~$\Bc_\circ$ and of the collision kernel $B$, cf.~\eqref{eq:def-Bc0} and~\eqref{eq:LB-kernel}, Lemma~\ref{lem:eps-bndd-R}(i) yields
\begin{equation}\label{eq:homogvvst-1}
|B(v,v-v_*;\nabla \mu)|\,\lesssim_V\,\int_{\R^d}|k|^2\widehat V(k)^2\delta(k\cdot(v-v_*))\,\dbar k\,\lesssim\,|v-v_*|^{-1},
\end{equation}
hence, by the Hardy--Littlewood--Sobolev inequality, for all $d<q<\infty$ and $r\ge0$,
\[\|\Bc_\circ[g]\|_{\Ld^q(\R^d)}~\lesssim_q~\|\sqrt{\mu}g\|_{\Ld^{\frac{dq}{d+q(d-1)}}(\R^d)}.\]
Combined with the fast decay of $\sqrt\mu$, this proves~(i).

\medskip
\step2 Proof of~(ii).\\
By Leibniz' rule, we can decompose
\begin{equation}\label{eq:diff-B0-decomp}
\nabla^\alpha\Bc_\circ[g]\,=\,\sum_{\gamma\le\alpha}\binom\alpha\gamma T^\alpha_{\gamma}[g],
\end{equation}
in terms of
\begin{equation*}
T^\alpha_{\gamma}[g]\,:=\,\iint_{\R^d\times\R^d}(k\otimes k)\,\pi\widehat V(k)^2\,\delta(k\cdot(v-v_*))
\nabla^{\alpha-\gamma}_v\big(\tfrac1{|\e(k,k\cdot v;\nabla \mu)|^2}\big)\,\dbar k\,\nabla_*^{\gamma}(\sqrt\mu_*g_*)\,dv_*.
\end{equation*}
The conclusion~(ii) follows provided we show for all $\gamma\le\alpha$ and $r\ge0$,
\begin{equation}\label{eq:L2bnd-diff-B0-re}
\|\rv^r\sqrt\mu\, T^\alpha_\gamma[g]\|_{\Ld^2(\R^d)}\,\lesssim_{V,\alpha,r}\,\sum_{\gamma<\alpha}\|\rv^{-r}\nabla^\gamma g\|_{\Ld^2(\R^d)}.
\end{equation}
We split the proof into three further substeps. Note that the case of dimension $d=2$ is critical and requires a particular care below.

\medskip
\substep{2.1} Proof of~\eqref{eq:L2bnd-diff-B0-re} for $\gamma<\alpha$.\\
By Lemma~\ref{lem:eps-bndd-R}, we get
\begin{eqnarray*}
|T^\alpha_{\gamma}[g]|&\lesssim_{V,\alpha}&\iint_{\R^d\times\R^d}|k|^2\widehat V(k)^2\,\delta(k\cdot(v-v_*))\dbar k\,|\nabla_*^{\gamma}(\sqrt\mu_*g_*)|\,dv_*\\
&\lesssim_V&\int_{\R^d}|v-v_*|^{-1}|\nabla_*^{\gamma}(\sqrt\mu_*g_*)|\,dv_*,
\end{eqnarray*}
and~\eqref{eq:L2bnd-diff-B0-re} follows as in Step~1 provided $\gamma<\alpha$.

\medskip
\substep{2.2} Proof of~\eqref{eq:L2bnd-diff-B0-re} for $\gamma=\alpha$ in case $d>2$.\\
Given $e_j\le\alpha$, successive integrations by parts yield
\begin{eqnarray*}
T^\alpha_{\alpha}[g]
&=&
\iint_{\R^d\times\R^d}\tfrac{k_j(k\otimes k)\,\pi\widehat V(k)^2}{|\e(k,k\cdot v;\nabla \mu)|^2}\delta'(k\cdot(v-v_*))\,\dbar k\,\nabla_*^{\alpha-e_j}(\sqrt\mu_*g_*)\,dv_*\\
&=&
-\sum_l\iint_{\R^d\times\R^d}\tfrac{(v-v_*)_l}{|v-v_*|^2}\,\nabla_{k_l}\Big(\tfrac{k_j(k\otimes k)\pi\widehat V(k)^2}{|\e(k,k\cdot v;\nabla \mu)|^2}\Big)\delta(k\cdot(v-v_*))\,\dbar k\,\nabla_*^{\alpha-e_j}(\sqrt\mu_*g_*)\,dv_*.
\end{eqnarray*}
Expanding the $k$-gradient in the last term, and noting that the $k$-gradient of the dispersion function can be written as
\begin{equation}\label{eq:form-nabk-eps}
\nabla_{k}\big(\e(k,k\cdot v;\nabla \mu)\big)\,=\,R(k,k\cdot v)+r(k,k\cdot v)v,
\end{equation}
in terms of
\begin{eqnarray*}
R(k,k\cdot v)&:=&\nabla\widehat V(k)\int_{\R^d}\tfrac{k\cdot\nabla\mu_*}{k\cdot(v-v_*)-i0}\,dv_*
+\widehat V(k)\big(\Id-\tfrac{k\otimes k}{|k|^2}\big)\int_{\R^d}\tfrac{\nabla \mu_*}{k\cdot(v-v_*)-i0}\,dv_*\\
&&\hspace{2cm}-\tfrac{\widehat V(k)}{|k|^{2}}\int_{\R^d}\tfrac{(k\otimes k):\nabla^2 \mu_*}{k\cdot(v-v_*)-i0}\,v_*\,dv_*,\\
r(k,k\cdot v)&:=&\tfrac{\widehat V(k)}{|k|^{2}}\int_{\R^d}\tfrac{(k\otimes k):\nabla^2 \mu_*}{k\cdot(v-v_*)-i0}\,dv_*,
\end{eqnarray*}
we are led to
\begin{multline}\label{eq:represent-Talphalph}
T_{\alpha}^\alpha[g]\,=\,
\int_{\R^d}K_j^0(v,v-v_*)\nabla_*^{\alpha-j}(\sqrt\mu_* g_*)\,dv_*\\
+\sum_lv_l\int_{\R^d}K_{j,l}^1(v,v-v_*)\nabla_*^{\alpha-j}(\sqrt\mu_* g_*)\,dv_*,
\end{multline}
where the kernels $K^0,K^1$ are defined by
\begin{eqnarray*}
K_j^0(v,w)&:=&-\sum_l\tfrac{w_l}{|w|^2}\int_{\R^d}\tfrac{\nabla_{k_l}(k_j(k\otimes k)\pi\widehat V(k)^2)}{|\e(k,k\cdot v;\nabla\mu)|^2}\delta(k\cdot w)\,\dbar k\\
&&+\,2\sum_l\tfrac{w_l}{|w|^2}\int_{\R^d}\tfrac{k_j(k\otimes k)\pi\widehat V(k)^2}{|\e(k,k\cdot v;\nabla\mu)|^4}\Re\big(\overline{\e(k,k\cdot v;\nabla\mu)}R_l(k,k\cdot v)\big)\,\delta(k\cdot w)\,\dbar k,\\
K_{j,l}^1(v,w)&:=&2\,\tfrac{w_l}{|w|^2}\int_{\R^d}\tfrac{k_j(k\otimes k)\pi\widehat V(k)^2}{|\e(k,k\cdot v;\nabla\mu)|^4}\Re\big(\overline{\e(k,k\cdot v;\nabla\mu)}r(k,k\cdot v)\big)\,\delta(k\cdot w)\,\dbar k.
\end{eqnarray*}
By Lemma~\ref{lem:eps-bndd-R}, as the integrability assumptions on $V$ ensure $\int_{\R^d}|k|^2|\nabla\widehat V(k)|\widehat V(k)\,\dbar k<\infty$, a direct estimate yields
\begin{equation}\label{eq:est-K0K1-sing}
|K_j^0(v,w)|+|K^1_{j,l}(v,w)|\,\lesssim_V\,|w|^{-2}.
\end{equation}
In case $d>2$, the Hardy--Littlewood--Sobolev inequality then gives for all $\frac d2<q<\infty$,
\[\|\rv^{-1}T^\alpha_{\alpha}[g]\|_{\Ld^q(\R^d)}\,\lesssim_{V,q}\,\|\nabla^{\alpha-j}(\sqrt\mu g)\|_{\Ld^\frac{dq}{d+q(d-2)}(\R^d)},\]
hence~\eqref{eq:L2bnd-diff-B0-re} follows for $\gamma=\alpha$.

\medskip
\substep{2.3} Proof of~\eqref{eq:L2bnd-diff-B0-re} for $\gamma=\alpha$ in case $d=2$.\\
Starting point is again the representation~\eqref{eq:represent-Talphalph}, but we note that in view of~\eqref{eq:est-K0K1-sing} the kernels~$K^0,K^1$ are singular in 2D and require a finer treatment.
By Lemma~\ref{lem:eps-bndd-R}, we note that for all~$v,w\in\R^2$,
\begin{gather*}
K^0_j(v,s w)=s^{-2}K^0_j(v, w),\quad K^1_{j,l}(v,s w)=s^{-2}K^1_{j,l}(v, w),\qquad\text{for all $s>0$},\\
\int_{|w'|=1}K_j^0(v,w')\,dw'=\int_{|w'|=1}K_{j,l}^1(v,w')\,dw'=0,\\
\sup_{v'}\sup_{|w'|=1}\Big(|K_j^0(v',w')|+|K^1_{j,l}(v',w')|\Big)\,\lesssim_V\,1.
\end{gather*}
Standard Calder\'on--Zygmund singular integral theory in form of~\cite[Theorem~2]{Calderon-Zygmund} can then be applied and yields for all $1<q<\infty$,
\[\|\rv^{-1}T_\alpha^\alpha[g]\|_{\Ld^q(\R^2)}\,\lesssim_{V,q}\,\|\nabla^{\alpha-j}(\sqrt\mu g)\|_{\Ld^q(\R^2)},\]
hence~\eqref{eq:L2bnd-diff-B0-re} follows for $\gamma=\alpha$.
\end{proof}

Next, we prove boundedness and regularity properties of the linear operator~$\Bc(\nabla F)$ for any scalar field $F$, cf.~\eqref{eq:def-Bc}.
Since this result is crucial to the proof of Theorem~\ref{th:global}, let us give some motivation. Formally, for a solution $f$ of~\eqref{eq:g-reform-re}, we have for all $\alpha\geq0$,
\begin{multline*}
\tfrac12 \partial_t  \|\nabla^\alpha f\|^2_{\Ld^2(\R^d)} \,=\, -\int_{\R^d}  (\nabla+ v)\nabla^\alpha f \cdot \nabla^\alpha \Big(\Bc[\sqrt\mu+f]\,(\nabla+ v)f-(\sqrt\mu+f)\Bc[(\nabla+ v)f]\Big) \\
-\sum_{e_j\leq \alpha}\binom{\alpha}{e_j}\int_{\R^d}  e_j\nabla^\alpha f  \cdot  \nabla^{\alpha-e_j} \Big(\Bc[\sqrt\mu+f]\,(\nabla+ v)f-(\sqrt\mu+f)\Bc[(\nabla+ v)f]\Big).
\end{multline*}
The following lemma allows us to control precisely this right-hand side by the $H^{|\alpha|}$ norm of~$f$ and by the dissipation.
The right-hand side terms become critical when all the derivatives in $\nabla^\alpha,\nabla^{\alpha-e_j}$ fall either on $(\nabla+v)f$ or on $\Bc[(\nabla+v)f]$. To this end, the following lemma provides \eqref{eq:lemB-1} and~\eqref{eq:lemB-2}--\eqref{eq:lemB-3} for the respective cases.
An important part of the proof consists of noting that the Sobolev embedding always only needs to be applied on 1D sets in the direction of the wavenumber $k$, thus reducing the loss of derivatives.
Note that the last bound~\eqref{eq:lemB-3} is particularly involved, but is in fact not needed in the present section: it will only be used in the proof of local well-posedness away from equilibrium in Section~\ref{sec:away-equ}. The 2D case in~\eqref{eq:lemB-3} could actually be included but would substantially lengthen the proof and is omitted for simplicity.

\begin{lem}[Properties of $\Bc$]\label{lem:B}
Let $V\in\Ld^1\cap\dot H^\frac12(\R^d)$ be isotropic and positive definite.
Given \mbox{$g,h\in\Ld^2_\loc(\R^d)$}, provided $g$ satisfies the following smallness condition, for some \mbox{$r_0\ge0$}, $\delta_0>0$, and some large enough constant $C_0$,
\[\|\rv^{-r_0}\rnabla^{\frac32+\delta_0}g\|_{\Ld^2(\R^d)}\le\tfrac1{C_0},\]
we have for all vector fields $h_1,h_2$, for all $\alpha\ge0$ and $r\ge0$,
\begin{multline}\label{eq:lemB-1}
\Big|\int_{\R^d}h_1\cdot\big(\nabla^\alpha\Bc(\nabla F_g)[h]\big)\,h_2\Big|\,\lesssim_{V,\alpha,r}\,
\|h_1\|_{\Ld^2_A(\R^d)}\|h_2\|_{\Ld^2_A(\R^d)}\|h\|_{H^{|\alpha|+1}(\R^d)}\\
\times\Big(1+\|g\|_{H^{|\alpha|+1}(\R^d)}^{|\alpha|}
+\mathds1_{\alpha>0}\|\rv^{-r}\rnabla^{|\alpha|+2}g\|_{\Ld^2(\R^d)}
\Big),
\end{multline}
where alternatively we can exchange one derivative of $g,h$ for one derivative of $h_2$,
\begin{multline}\label{eq:lemB-2}
\Big|\int_{\R^d}h_1\cdot \big(\nabla^\alpha\Bc(\nabla F_g)[h]\big)\,h_2\Big|
\,\lesssim_{V,\alpha,r}\,
\|h_1\|_{\Ld^2_A(\R^d)}\|\rnabla h_2\|_{\Ld^2_A(\R^d)}\\
\times\bigg(\|h\|_{H^{(|\alpha|-1)\vee1}(\R^d)}\Big(1+\|g\|_{H^{|\alpha|}(\R^d)}^{|\alpha|}+\big(1+\mathds1_{|\alpha|\ge2}\|g\|_{H^{|\alpha|}(\R^d)}\big)\|\rv^{-r}\rnabla^{|\alpha|+1}g\|_{\Ld^2(\R^d)}\Big)\\
+\|\rv^{-r}\rnabla^{|\alpha|}h\|_{\Ld^2(\R^d)}\big(1+\|g\|_{H^{|\alpha|}(\R^d)}^{|\alpha|}\big)\bigg).
\end{multline}
In case of dimension $d>2$, provided $V\in\Ld^1\cap \dot H^2(\R^d)$ and $xV\in\Ld^2(\R^d)$, we can further improve on the norm of $g$: for all $\alpha\ge0$ and $r\ge0$,
\begin{align}\label{eq:lemB-3}
&\Big|\int_{\R^d}h_1\cdot \big(\nabla^\alpha\Bc(\nabla F_g)[h]\big)\,h_2\Big|
\,\lesssim_{V,\alpha,r}\,\|h_1\|_{\Ld^2_A(\R^d)}\|\rnabla h_2\|_{\Ld^2_A(\R^d)}\\
&\hspace{7cm}\times\|h\|_{H^{|\alpha|\vee2}(\R^d)}
\Big(1+\|g\|_{H^{|\alpha|\vee3}(\R^d)}^{|\alpha|\vee2}\Big).\nonumber\qedhere
\end{align}
\end{lem}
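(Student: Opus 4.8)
The plan is to derive all three estimates from one Leibniz expansion of $\nabla_v^\alpha\Bc(\nabla F_g)[h]$, whose $v$-dependence enters only through the Dirac mass $\delta(k\cdot(v-v_*))$ and the screening factor $|\e(k,k\cdot v;\nabla F_g)|^{-2}$. Derivatives hitting the Dirac mass I would immediately integrate by parts in $v_*$, moving them onto $\sqrt\mu_*h_*$; this keeps the $v$-space singularity at the Landau level $|v-v_*|^{-1}$ and preserves the tensor structure $k\otimes k$. Derivatives hitting the screening factor are treated by the chain rule together with Lemma~\ref{lem:eps-bndd-R}(ii)--(iii) — this is exactly where the smallness of $g$ is used, to keep $|\e|$ bounded below — so that $\nabla_v^\gamma(|\e|^{-2})$ becomes a sum of products of uniformly bounded factors and factors $\nabla_v^{\beta_i}\e$, each dominated by $\|\rv^{-r}\rnabla^{|\beta_i|+\frac32+\delta}g\|_{\Ld^2}$ for arbitrarily small $\delta>0$. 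Since $\sum_i|\beta_i|\le|\alpha|$, either at least two $\beta_i$ are positive, each then costing $\le|\alpha|-1+\frac32+\delta\le|\alpha|+1$ derivatives of $g$ and contributing a polynomial of degree $\ge2$ in $\|g\|_{H^{|\alpha|+1}}$, or exactly one is positive, which when it carries all $|\alpha|$ derivatives produces the lone linear term $\|\rv^{-r}\rnabla^{|\alpha|+2}g\|_{\Ld^2}$ (using $\delta<\frac12$). This is the source of the $g$-dependent prefactor in \eqref{eq:lemB-1}.

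I would then close the bilinear form $\int_{\R^d}h_1\cdot(\nabla_v^\alpha\Bc(\nabla F_g)[h])h_2$ by a Cauchy--Schwarz adapted to $A$. On the support of $\delta(k\cdot(v-v_*))$ one has $k\perp(v-v_*)$, hence $h_1(v)\cdot(k\otimes k)h_2(v)=\big(k\cdot P^\bot_{v-v_*}h_1(v)\big)\big(k\cdot P^\bot_{v-v_*}h_2(v)\big)$; together with $\int_{\R^d}|k|^2\widehat V(k)^2\delta(k\cdot(v-v_*))\dbar k\lesssim|v-v_*|^{-1}$ this shows that, up to the bounded screening factor, the kernel is comparable to the Landau kernel $\tfrac1{|v-v_*|}P^\bot_{v-v_*}$, whose integral against any sufficiently decaying profile is controlled by $A(v)$ via Lemma~\ref{lem:A}; this is how the norms $\|h_1\|_{\Ld^2_A}\|h_2\|_{\Ld^2_A}$ appear, the fast decay of $\sqrt\mu\,\nabla^\beta h$ relative to $\mu_*$ reducing everything to the $\mu$-weighted case. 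The remaining factor $\int\delta(k\cdot(v-v_*))\nabla_{v_*}^\beta(\sqrt\mu_*h_*)\,dv_*$ is a hyperplane transform of $\sqrt\mu\,\nabla^\beta h$ in the single variable $\hat k\cdot v$: the point that limits the loss of regularity on $h$ to one derivative is that extracting it in $L^\infty$ in that one-dimensional variable costs only a $\tfrac12+\delta$ trace there, so by the Fourier slice theorem the $\hat k$-integration is controlled by $\|h\|_{H^{|\alpha|+1}}$ rather than by a $d$-dimensional $H^{d/2+\delta}$ trace. Estimate \eqref{eq:lemB-2} follows by instead integrating one derivative by parts from $\sqrt\mu h$ (or from the screening factor) onto $h_2$, producing $\|\rnabla h_2\|_{\Ld^2_A}$ and the extra weighted term $\|\rv^{-r}\rnabla^{|\alpha|}h\|_{\Ld^2}$.

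For \eqref{eq:lemB-3} in dimension $d>2$, the aim is to lower the worst $g$-derivative count from $|\alpha|+2$ to $|\alpha|\vee3$, which forces us not to keep the top-order contribution $\nabla_v^\alpha\e=k^\alpha(\partial_\zeta^{|\alpha|}\e)(k,k\cdot v)$ as is (it also carries the high-degree monomial $k^\alpha$, beyond what $V\in\dot H^{1/2}$ alone controls on the relevant hyperplanes). Here I would perform a further integration by parts in $k$ against the Dirac mass, exploiting precisely the factor $k\otimes k$ and the extra $k$-monomial; this transfers part of the burden onto the kernel at the price of a more singular $|v-v_*|^{-1-m}$ with $m\ge1$, and is where the stronger hypotheses $V\in\dot H^2(\R^d)$ and $xV\in\Ld^2(\R^d)$ enter, to control two $k$-derivatives of $\widehat V$ and of $|\e|^{-2}$ via the representation \eqref{eq:form-nabk-eps}. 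Since for $d>2$ these kernels are still of Riesz-potential type, the bilinear form closes by Hardy--Littlewood--Sobolev with room to spare, so the derivatives surviving on the screening factor may be measured in the weaker norm $\|g\|_{H^{|\alpha|\vee3}}$; tracking the Leibniz branches then shows the overall degree is $|\alpha|\vee2$ and, because the Hardy--Littlewood--Sobolev step needs a bit more integrability of $h$, that $\|h\|_{H^{|\alpha|\vee2}}$ is the right norm. This realizes the ``nonlocality makes it lower order'' mechanism of the introduction: $\Bc$ gains $d-1$ velocity derivatives, and part of this gain can be spent to avoid a loss of $g$-regularity.

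The main obstacle is the bookkeeping required to make these gains mutually compatible: (i) the regularity gain on $h$ from Sobolev traces taken only on one-dimensional slices of hyperplane (Radon) transforms; (ii) the simultaneous control of the degree and the order of the $g$-dependent prefactor while $|\e|^{-2}$ is differentiated up to $|\alpha|$ times; and (iii), for \eqref{eq:lemB-3}, the exchange of the extra $v$-space singularity created by integrating by parts in $k$ against regularity of $g$ — all of this while, in $d=2$, even the basic Landau kernel $|v-v_*|^{-1}$ is only borderline in $\Ld^2_\loc$, so the tensor structure must be used carefully (replacing $P^\bot_{v-v_*}$ by $P^\bot_v$ up to Maxwellian errors) rather than crudely dominating $|P^\bot_{v-v_*}h_i|$ by $|h_i|$. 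Verifying in particular that moving a $k$-derivative off the dispersion function genuinely lowers the order, rather than merely relocating the difficulty, is the delicate heart of the argument and the reason the $d=2$ case of \eqref{eq:lemB-3} is left aside.
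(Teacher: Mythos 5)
Your outline follows essentially the same route as the paper for \eqref{eq:lemB-1} and \eqref{eq:lemB-3}: Leibniz expansion with the Dirac-mass derivatives shifted onto $\sqrt\mu_*h_*$; Fa\`a di Bruno together with Lemma~\ref{lem:eps-bndd-R}(ii)--(iii), isolating the single-factor term to explain the linear $\|\rv^{-r}\rnabla^{|\alpha|+2}g\|_{\Ld^2}$ contribution; the tensor structure (products with $v$ replaced by products with $v_*$ across the Dirac mass) to return to $\Ld^2_A$-norms as in \eqref{eq:hnabAh}; one-dimensional Sobolev traces along $\hat k\R$ rather than a $d$-dimensional embedding; and, for \eqref{eq:lemB-3}, the $k$-integration by parts that trades the top-order $v_{**}$-derivative on $g$ for a $\nabla_k$ hitting $\widehat V$ and the kernel, followed by a Hardy--Littlewood--Sobolev argument that only closes for $d>2$. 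All of this matches the paper, including the correct identification of where the stronger hypotheses $V\in\dot H^2$, $xV\in\Ld^2$ enter.

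Where your plan diverges, and where there is a small but real gap, is \eqref{eq:lemB-2}. You propose to integrate one derivative by parts (from $\sqrt\mu h$ or the screening factor) onto $h_2$. A $v_*$-derivative moved onto $\delta(k\cdot(v-v_*))$ becomes a $\nabla_v$, and when that is integrated by parts in $v$ it lands not only on $h_2$ but also on $|\e(k,k\cdot v;\nabla F_g)|^{-2}$; the resulting $\nabla_v\e$ term, if bounded by the pointwise estimate of Lemma~\ref{lem:eps-bndd-R}(iii), costs $\tfrac52+\delta$ derivatives of $g$, which exceeds the $\|\rv^{-r}\rnabla^{|\alpha|+1}g\|_{\Ld^2}$ allowance when $|\alpha|=1$. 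The paper does \emph{not} use an integration by parts here. It instead uses the weight split \eqref{eq:v-vs-bnd} together with a one-dimensional Sobolev trace on $h_2$ in the variable $\hat k\cdot v$, applied directly at the level of the multiple integral (the $T_\alpha$ quantity of \eqref{eq:def-Tabbr0}--\eqref{eq:Tabbrref}), while keeping $\nabla_v^\alpha\e$ as the Hilbert-transform integral \eqref{eq:diff-eps-form} rather than bounding it pointwise. In other words, the Radon-transform deferral you already invoke to control the loss of regularity on $h$ must also be applied to the differentiated dispersion function. With that correction your plan lines up with the paper's proof.
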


\begin{proof}
We split the proof into three main steps. Note that we establish slightly improved versions of~\eqref{eq:lemB-1}--\eqref{eq:lemB-3}, while in the statement we reduce for instance to integer differentiability.

\medskip
\step1 Proof of~\eqref{eq:lemB-1}.\\
By definition of $\Bc$ and of the collision operator $B$, cf.~\eqref{eq:def-Bc} and~\eqref{eq:LB-kernel}, and by Leibniz' rule, we can decompose
\begin{equation}\label{eq:decomp-Ggamalph}
\nabla^\alpha\Bc(\nabla F_g)[h]\,=\,\sum_{\gamma\le\alpha}\binom\alpha\gamma G_\gamma^\alpha(g,h),
\end{equation}
in terms of
\begin{multline}\label{eq:def-Ggamalph}
G_\gamma^\alpha(g,h)\,:=\,
\iint_{\R^d\times\R^d}(k\otimes k)\,\pi\widehat V(k)^2\delta(k\cdot(v-v_*))\nabla_v^{\alpha-\gamma}\big(\tfrac{1}{|\e(k,k\cdot v;\nabla F_g)|^2}\big)\\
\times\nabla_*^{\gamma}(\sqrt\mu_*h_*)\,dv_*\dbar k.
\end{multline}
By Faà di Bruno's formula, the derivatives of $|\e(k,k\cdot v;\nabla F_g)|^{-2}$ are evaluated as follows, for~$\gamma<\alpha$,
\begin{multline}\label{eq:diff-1/eps0}
\big|\nabla^{\alpha-\gamma}_v\big(\tfrac1{|\e(k,k\cdot v;\nabla F_g)|^2}\big)\big|\\
\,\lesssim_\alpha\,\sum_{n=1}^{|\alpha-\gamma|}\tfrac{1}{|\e(k,k\cdot v;\nabla F_g)|^{n+2}}
\sum_{\gamma_1+\ldots+\gamma_n=\alpha-\gamma\atop\gamma_1,\ldots,\gamma_n>0}~\prod_{j=1}^n|\nabla^{\gamma_j}\e(k,k\cdot v;\nabla F_g)|,
\end{multline}
By Lemma~\ref{lem:eps-bndd-R}(ii)--(iii), we deduce for all $\delta>0$ and $r\ge0$,
\begin{equation*}
\big|\nabla^{\alpha-\gamma}_v\big(\tfrac1{|\e(k,k\cdot v;\nabla F_g)|^2}\big)\big|
\,\lesssim_{V,\alpha,\delta,r}\,1+\|\rv^{-r}\rnabla^{|\alpha-\gamma|+\frac32+\delta}g\|_{\Ld^2(\R^d)}^{|\alpha-\gamma|}.
\end{equation*}
More precisely, first separating the term with $n=1$ in~\eqref{eq:diff-1/eps0}, we get the following refined estimate, for all $\delta>0$ and $r\ge0$,
\begin{multline}\label{eq:prepre-bnd-G}
\big|\nabla^{\alpha-\gamma}_v\big(\tfrac1{|\e(k,k\cdot v;\nabla F_g)|^2}\big)\big|\\
\,\lesssim_{V,\alpha,\delta,r}\,1+\|\rv^{-r}\rnabla^{|\alpha-\gamma|+\frac12+\delta}g\|_{\Ld^2(\R^d)}^{|\alpha-\gamma|}+\mathds1_{\gamma<\alpha}\|\rv^{-r}\rnabla^{|\alpha-\gamma|+\frac32+\delta}g\|_{\Ld^2(\R^d)},
\end{multline}
and thus, inserting this into~\eqref{eq:def-Ggamalph},
\begin{multline}\label{eq:pre-bnd-G}
|G_\gamma^\alpha(g,h)|\,\lesssim_{V,\alpha,\delta,r}\,
\int_{\R^d}|v-v_*|^{-1}|\nabla_*^{\gamma}(\sqrt\mu_*h_*)|\,dv_*\\
\times\Big(1+\|\rv^{-r}\rnabla^{|\alpha-\gamma|+\frac12+\delta}g\|_{\Ld^2(\R^d)}^{|\alpha-\gamma|}
+\mathds1_{\gamma<\alpha}\|\rv^{-r}\rnabla^{|\alpha-\gamma|+\frac32+\delta}g\|_{\Ld^2(\R^d)}
\Big).
\end{multline}
Writing
\begin{equation}\label{eq:v-vs-bnd}
|v-v_*|^{-1}\le\rv^{-1}\big(\rvs+|v-v_*|^{-1}\big),
\end{equation}
and appealing to the Sobolev inequality to compensate for the fact that $v_*\mapsto|v-v_*|^{-1}$ does not belong to $\Ld^2_\loc(\R^d)$ in dimension $d=2$, we deduce for all $\delta>0$ and $r\ge0$,
\begin{multline}\label{eq:bnd-Ggambet-easy0}
|G_\gamma^\alpha(g,h)|\,\lesssim_{V,\alpha,\delta,r}\,
\rv^{-1}\|\rv^{-r}\rnabla^{|\gamma|+\delta}h\|_{\Ld^2(\R^d)}\\
\times\Big(1+\|\rv^{-r}\rnabla^{|\alpha-\gamma|+\frac12+\delta}g\|_{\Ld^2(\R^d)}^{|\alpha-\gamma|}
+\mathds1_{\gamma<\alpha}\|\rv^{-r}\rnabla^{|\alpha-\gamma|+\frac32+\delta}g\|_{\Ld^2(\R^d)}
\Big),
\end{multline}
hence, for all vector fields $h_1,h_2$,
\begin{multline*}
\Big|\int_{\R^d}h_1\cdot G_\gamma^\alpha(g,h)\,h_2\Big|\,\lesssim_{V,\alpha,\delta,r}\,\|\rv^{-\frac12}h_1\|_{\Ld^2(\R^d)}\|\rv^{-\frac12}h_2\|_{\Ld^2(\R^d)}\|\rv^{-r}\rnabla^{|\gamma|+\delta}h\|_{\Ld^2(\R^d)}\\
\times\Big(1+\|\rv^{-r}\rnabla^{|\alpha-\gamma|+\frac12+\delta}g\|_{\Ld^2(\R^d)}^{|\alpha-\gamma|}
+\mathds1_{\gamma<\alpha}\|\rv^{-r}\rnabla^{|\alpha-\gamma|+\frac32+\delta}g\|_{\Ld^2(\R^d)}
\Big).
\end{multline*}
Now note that the very same bound~\eqref{eq:bnd-Ggambet-easy0} holds for $G_\gamma^\alpha(g,h)$ replaced by $G_\gamma^\alpha(g,h)v$ or by~$v\cdot G_\gamma^\alpha(g,h)v$ since the products with $v$ can be replaced by products with $v_*$ in the integral~\eqref{eq:def-Ggamalph} thanks to the Dirac mass~$\delta(k\cdot(v-v_*))$. The above estimate can therefore be improved exactly as in the proof of~\eqref{eq:hnabAh}: decomposing $h_1,h_2$ with respect to $P_v,P_v^\bot$, and suitably estimating the different terms, we deduce
\begin{multline}\label{eq:improved-Gbnd0}
\Big|\int_{\R^d}h_1\cdot G_\gamma^\alpha(g,h)\,h_2\Big|\,\lesssim_{V,\alpha,\delta,r}\,\|h_1\|_{\Ld^2_A(\R^d)}\|h_2\|_{\Ld^2_A(\R^d)}\|\rv^{-r}\rnabla^{|\gamma|+\delta}h\|_{\Ld^2(\R^d)}\\
\times\Big(1+\|\rv^{-r}\rnabla^{|\alpha-\gamma|+\frac12+\delta}g\|_{\Ld^2(\R^d)}^{|\alpha-\gamma|}
+\mathds1_{\gamma<\alpha}\|\rv^{-r}\rnabla^{|\alpha-\gamma|+\frac32+\delta}g\|_{\Ld^2(\R^d)}
\Big).
\end{multline}
Inserting this into~\eqref{eq:decomp-Ggamalph}, we get
\begin{multline*}
\Big|\int_{\R^d}h_1\cdot\big(\nabla^\alpha\Bc(\nabla F_g)[h]\big)h_2\Big|\,\lesssim_{V,\alpha,\delta,r}\,
\|h_1\|_{\Ld^2_A(\R^d)}\|h_2\|_{\Ld^2_A(\R^d)}\|\rv^{-r}\rnabla^{|\alpha|+\delta}h\|_{\Ld^2(\R^d)}\\
\times\Big(1+\|\rv^{-r}\rnabla^{|\alpha|+\frac12+\delta}g\|_{\Ld^2(\R^d)}^{|\alpha|}
+\mathds1_{\alpha>0}\|\rv^{-r}\rnabla^{|\alpha|+\frac32+\delta}g\|_{\Ld^2(\R^d)}
\Big),
\end{multline*}
and the conclusion~\eqref{eq:lemB-1} follows.

\medskip
\step2 Proof of~\eqref{eq:lemB-2}.\\
In view of~\eqref{eq:decomp-Ggamalph}, it suffices to prove suitable improvements of~\eqref{eq:improved-Gbnd0} in case $\gamma=\alpha$ and in case $|\gamma|\le1$, which we do in the following three further substeps.

\medskip
\substep{2.1} Case $\gamma=\alpha$: prove that for all $\delta>0$ and $r\ge0$,
\begin{equation}\label{eq:Galphalph-impr}
\Big|\int_{\R^d}h_1\cdot G_\alpha^\alpha(g,h)\,h_2\Big|\,\lesssim_{V,\alpha,\delta,r}\,
\|h_1\|_{\Ld^2_A(\R^d)}\|\rnabla^\delta h_2\|_{\Ld^2_A(\R^d)}\|\rv^{-r}\rnabla^{|\alpha|}h\|_{\Ld^2(\R^d)}.
\end{equation}
From~\eqref{eq:pre-bnd-G} and~\eqref{eq:v-vs-bnd}, we find
\begin{equation*}
|G_\alpha^\alpha(g,h)|\,\lesssim_{V,\alpha,\delta,r}\,
\rv^{-1}\bigg(\int_{\R^d}\rvs|\nabla_*^{\alpha}(\sqrt\mu_*h_*)|\,dv_*+\int_{\R^d}|v-v_*|^{-1}|\nabla_*^{\alpha}(\sqrt\mu_*h_*)|\,dv_*\bigg).
\end{equation*}
Appealing again to the Sobolev inequality to compensate for the fact that $v_*\mapsto|v-v_*|^{-1}$ does not belong to $\Ld^2_\loc(\R^d)$ in dimension $d=2$, we deduce for all $\delta>0$ and $r\ge0$,
\begin{equation*}
\Big|\int_{\R^d}h_1\cdot G_\alpha^\alpha(g,h)\,h_2\Big|\,\lesssim_{V,\alpha,\delta,r}\,
\|\rv^{-\frac12}h_1\|_{\Ld^2(\R^d)}\|\rv^{-\frac12}\rnabla^\delta h_2\|_{\Ld^2(\R^d)}\|\rv^{-r}\rnabla^{|\alpha|}h\|_{\Ld^2(\R^d)}.
\end{equation*}
Again improving on the weights $\rv^{-\frac12}$ as in~\eqref{eq:improved-Gbnd0}, the claim~\eqref{eq:Galphalph-impr} follows.

\medskip
\substep{2.2} Case $|\gamma|=1$: prove that for all $\delta>0$ and $r\ge0$,
\begin{multline}\label{eq:Galph1-impr}
\Big|\int_{\R^d}h_1\cdot G_{e_j}^\alpha(g,h)\,h_2\Big|\,\lesssim_{V,\alpha,\delta,r}\,
\|h_1\|_{\Ld^2_A(\R^d)}\|\rnabla^\delta h_2\|_{\Ld^2_A(\R^d)}\|\rv^{-r}\rnabla h\|_{\Ld^2(\R^d)}\\
\times\Big(1+\|\rv^{-r}\rnabla^{|\alpha|-\frac12+\delta}g\|_{\Ld^2(\R^d)}^{|\alpha|}+\|\rv^{-r}\rnabla^{|\alpha|+\frac12+\delta}g\|_{\Ld^2(\R^d)}\Big).
\end{multline}
We start from~\eqref{eq:prepre-bnd-G} in the following form, for all $\delta>0$ and $r\ge0$,
\begin{equation*}
\big|\nabla^{\alpha-e_j}_v\big(\tfrac1{|\e(k,k\cdot v;\nabla F_g)|^2}\big)\big|
\lesssim_{V,\alpha,\delta,r}
1+\|\rv^{-r}\rnabla^{|\alpha|-\frac12+\delta}g\|_{\Ld^2(\R^d)}^{|\alpha|}+\|\rv^{-r}\rnabla^{|\alpha|+\frac12+\delta}g\|_{\Ld^2(\R^d)}.
\end{equation*}
Inserting this into~\eqref{eq:def-Ggamalph} and arguing as in~\eqref{eq:Galphalph-impr}, the claim~\eqref{eq:Galph1-impr} follows.

\medskip
\substep{2.3} Case $\gamma=0$: prove that for all $\delta>0$ and $r\ge0$,
\begin{multline}\label{eq:Galph0-impr}
\Big|\int_{\R^d}h_1\cdot G_0^\alpha(g,h)\,h_2\Big|
\,\lesssim_{V,\alpha,\delta,r}\,
\|h_1\|_{\Ld^2_A(\R^d)}\|\rnabla^{\frac12+\delta}h_2\|_{\Ld^2_A(\R^d)}\|\rv^{-r}\rnabla^{\frac12+\delta} h\|_{\Ld^2(\R^d)}\\
\hspace{-4cm}\times\bigg(1+\|\rv^{-r}\rnabla^{|\alpha|}g\|_{\Ld^2(\R^d)}^{|\alpha|}\\
+\Big(1+\mathds1_{|\alpha|\ge2}\|\rv^{-r}\rnabla^2g\|_{\Ld^2(\R^d)}\Big)\|\rv^{-r}\rnabla^{|\alpha|+1}g\|_{\Ld^2(\R^d)}\bigg).
\end{multline}
Starting from~\eqref{eq:diff-1/eps0}, and applying again Lemma~\ref{lem:eps-bndd-R}(ii)--(iii), but now separating both the terms with $n=1$ and with $n=2$, we get for all $\delta>0$ and $r\ge0$,
\begin{multline*}
\big|\nabla^{\alpha}_v\big(\tfrac1{|\e(k,k\cdot v;\nabla F_g)|^2}\big)\big|
\,\lesssim_{V,\alpha,\delta,r}\,
1+\|\rv^{-r}\rnabla^{|\alpha|-\frac12+\delta}g\|_{\Ld^2(\R^d)}^{|\alpha|}
+|\nabla^\alpha\e(k,k\cdot v;\nabla F_g)|\\
+\mathds1_{|\alpha|\ge2}\sum_{e_j\le\alpha}|\nabla^{\alpha-e_j}\e(k,k\cdot v;\nabla F_g)||\nabla_j\e(k,k\cdot v;\nabla F_g)|.
\end{multline*}
Recalling~\eqref{eq:diff-eps-form} and applying again Lemma~\ref{lem:eps-bndd-R}(ii)--(iii), this yields
\begin{multline*}
\big|\nabla^{\alpha}_v\big(\tfrac1{|\e(k,k\cdot v;\nabla F_g)|^2}\big)\big|
\,\lesssim_{V,\alpha,\delta,r}\,1+\|\rv^{-r}\rnabla^{|\alpha|-\frac12+\delta}g\|_{\Ld^2(\R^d)}^{|\alpha|}
+\|\rv^{-r}\rnabla^{|\alpha|+\frac12+\delta}g\|_{\Ld^2(\R^d)}\\
+\Big|\int_{\R^d}\tfrac{k\cdot(\nabla\nabla^\alpha (\sqrt\mu g))_{**}}{k\cdot(v-v_{**})-i0}dv_{**}\Big|\\
+\mathds1_{|\alpha|\ge2}\Big(1+\|\rv^{-r}\rnabla^{|\alpha|+\frac12+\delta}g\|_{\Ld^2(\R^d)}\Big)\sum_{j}\Big|\int_{\R^d}\tfrac{k\cdot(\nabla\nabla_j(\sqrt\mu g))_{**}}{k\cdot(v-v_{**})-i0}dv_{**}\Big|.
\end{multline*}
Inserting this into~\eqref{eq:def-Ggamalph} and arguing as in~\eqref{eq:bnd-Ggambet-easy0}, we get for all vector fields $h_1,h_2$, for all~$\delta>0$ and $r\ge0$,
\begin{multline}\label{eq:bnd-Gh1h20}
\Big|\int_{\R^d}h_1\cdot G_0^\alpha(g,h)\,h_2\Big|
\,\lesssim_{V,\alpha}\,
\|\rv^{-\frac12}h_1\|_{\Ld^2(\R^d)}\|\rv^{-\frac12}h_2\|_{\Ld^2(\R^d)}\\
\times\Big(1+\|\rv^{-r}\rnabla^{|\alpha|-\frac12+\delta}g\|_{\Ld^2(\R^d)}^{|\alpha|}+\|\rv^{-r}\rnabla^{|\alpha|+\frac12+\delta}g\|_{\Ld^2(\R^d)}\Big)\|\rv^{-r}\rnabla^\delta h\|_{\Ld^2(\R^d)}\\
+T_\alpha(g,h;h_1,h_2)\\
+\mathds1_{|\alpha|\ge2}\Big(1+\|\rv^{-r}\rnabla^{|\alpha|+\frac12+\delta} g\|_{\Ld^2(\R^d)}\Big)\sum_jT_j(g,h;h_1,h_2),
\end{multline}
where we have set for abbreviation
\begin{multline}\label{eq:def-Tabbr0}
T_\alpha(g,h;h_1,h_2)\,:=\,
\int_{\R^d}|k|^2\widehat V(k)^2\\
\times\bigg(\iint_{\R^d\times\R^d}\delta(k\cdot(v-v_*))
|h_1||h_2|\sqrt\mu_*|h_*|\Big|\int_{\R^d}\tfrac{k\cdot(\nabla\nabla^\alpha (\sqrt\mu g))_{**}}{k\cdot(v-v_{**})-i0}dv_{**}\Big|\,dvdv_*\bigg)\dbar k.
\end{multline}
It remains to estimate the latter.
Recall the notation $\hat k={k}/{|k|}$.
Splitting integrals over~$v,v_*\in\R^d$ as integrals over $v,v_*\in \hat k\R\oplus\hat k^\bot$, and smuggling in a power of the weight~\mbox{$\langle\hat k\cdot v\rangle$}, we get from Hölder's inequality, for all $r_0\ge0$,
\begin{multline} \label{eq:Tabbrref}
\iint_{\R^d\times\R^d}\delta(k\cdot(v-v_*))
|h_1||h_2|\sqrt\mu_*|h_*|\Big|\int_{\R^d}\tfrac{k\cdot(\nabla\nabla^\alpha(\sqrt\mu g))_{**}}{k\cdot(v-v_{**})-i0}dv_{**}\Big|\,dvdv_*\\
\,\lesssim\,\tfrac1{|k|}\|\langle\hat k\cdot v\rangle^{-r_0}h_1\|_{\Ld^2(\hat k^\bot;\Ld^2(\hat k\R))}\|\langle\hat k\cdot v\rangle^{-r_0}h_2\|_{\Ld^2(\hat k^\bot;\Ld^\infty(\hat k\R))}\|\langle\hat k\cdot v\rangle^{2r_0}\sqrt\mu h\|_{\Ld^1(\hat k^\bot;\Ld^\infty(\hat k\R))}\\
\times\Big\|\int_{\R^d}\tfrac{\hat k\cdot(\nabla\nabla^\alpha(\sqrt\mu g))_{**}}{\hat k\cdot(\cdot-v_{**})-i0}dv_{**}\Big\|_{\Ld^2(\hat k\R)}.
\end{multline}
Using the Sobolev inequality on $\hat k\R$, the fast decay of $\mu$, and the boundedness of the Hilbert transform in $\Ld^2(\hat k\R)$, we deduce for all $r_0,r\ge0$ and $\delta>0$,
\begin{multline}\label{eq:preest-Tabbr0}
\iint_{\R^d\times\R^d}\delta(k\cdot(v-v_*))
|h_1||h_2|\sqrt\mu_*|h_*|\Big|\int_{\R^d}\tfrac{k\cdot(\nabla\nabla^\alpha(\sqrt\mu g))_{**}}{k\cdot(v-v_{**})-i0}dv_{**}\Big|\,dvdv_*\\
\,\lesssim_{r_0,r}\,\tfrac1{|k|}\|\langle\hat k\cdot v\rangle^{-r_0}h_1\|_{\Ld^2(\R^d)}\|\langle\hat k\cdot v\rangle^{-r_0}\rnabla^{\frac12+\delta}h_2\|_{\Ld^2(\R^d)}\\
\times\|\rv^{-r}\rnabla^{|\alpha|+1}g\|_{\Ld^2(\R^d)}\|\rv^{-r}\rnabla^{\frac12+\delta}h\|_{\Ld^2(\R^d)}.
\end{multline}
Choosing $r_0>\frac12$, we note that
\begin{equation*}
\int_{\Sp^{d-1}}\langle\hat k\cdot v\rangle^{-2r_0}d\sigma(\hat k)\,=\,|\Sp^{d-2}|\int_0^\pi\big(1+|v|^2(\cos\theta)^2\big)^{-r_0}(\sin\theta)\,d\theta
\,\lesssim_{r_0}\,\rv^{-1},
\end{equation*}
and therefore, as $V$ is isotropic,
\begin{eqnarray}
\lefteqn{\int_{\R^d}|k|\widehat V(k)^2\|\langle\hat k\cdot v\rangle^{-r_0}h_1\|_{\Ld^2(\R^d)}\|\langle\hat k\cdot v\rangle^{-r_0}\rnabla^{\frac12+\delta} h_2\|_{\Ld^2(\R^d)}\,\dbar k}\nonumber\\
&\lesssim&\Big(\int_{\R^d}\langle\hat k\cdot v\rangle^{-2r_0}|k|\widehat V(k)^2|h_1(v)|^2\,dv\,\dbar k\Big)^\frac12\nonumber\\
&&\hspace{2cm}\times\Big(\iint_{\R^d\times\R^d}\langle\hat k\cdot v\rangle^{-2r_0}|k|\widehat V(k)^2|\rnabla^{\frac12+\delta} h_2(v)|^2\,dv\,\dbar k\Big)^\frac12\nonumber\\
&\lesssim_{V,r}&\|\rv^{-\frac12}h_1\|_{\Ld^2(\R^d)}\|\rv^{-\frac12}\rnabla^{\frac12+\delta} h_2\|_{\Ld^2(\R^d)}.\label{eq:weight-intK}
\end{eqnarray}
Inserting~\eqref{eq:preest-Tabbr0} into~\eqref{eq:def-Tabbr0}, and using the latter estimate, we obtain for all $r\ge0$ and~$\delta>0$,
\begin{multline}\label{eq:bnd-Talph}
T_\alpha(g,h;h_1,h_2)\,\lesssim_{V,\alpha,\delta,r}\,
\|\rv^{-\frac12}h_1\|_{\Ld^2(\R^d)}\|\rv^{-\frac12}\rnabla^{\frac12+\delta} h_2\|_{\Ld^2(\R^d)}\\
\times\|\rv^{-r}\rnabla^{|\alpha|+1}g\|_{\Ld^2(\R^d)}\|\rv^{-r}\rnabla^{\frac12+\delta}h\|_{\Ld^2(\R^d)}.
\end{multline}
Inserting this into~\eqref{eq:bnd-Gh1h20} and improving on the weights $\rv^{-\frac12}$ as in~\eqref{eq:improved-Gbnd0}, the claim~\eqref{eq:Galph0-impr} follows.

\medskip
\step3 Proof of~\eqref{eq:lemB-3}.\\
In view of~\eqref{eq:decomp-Ggamalph}, \eqref{eq:improved-Gbnd0}, and~\eqref{eq:Galphalph-impr}, it suffices to prove suitable improvements of~\eqref{eq:Galph1-impr} and~\eqref{eq:Galph0-impr} for $G_\gamma^\alpha(g,h)$ with $|\gamma|\le1$. We split the proof into three further substeps. Note that we focus here on the case of dimension $d>2$ for simplicity.

\medskip
\substep{3.1} Case $|\gamma|=1$: prove that for all $\delta>0$ and $r\ge0$,
\begin{multline}\label{eq:Galph1-impr-re}
\Big|\int_{\R^d}h_1\cdot G_{e_j}^\alpha(g,h)\,h_2\Big|
\,\lesssim\,
\|h_1\|_{\Ld^2_A(\R^d)}\|\langle\nabla\rangle^{\frac12+\delta}h_2\|_{\Ld^2_A(\R^d)}\|\rv^{-r}\langle\nabla\rangle^{\frac32+\delta}h\|_{\Ld^2(\R^d)}\\
\times\Big(1+\|\rv^{-r}\rnabla^{|\alpha|}g\|_{\Ld^2(\R^d)}^{|\alpha|}\Big).
\end{multline}
Starting from~\eqref{eq:diff-1/eps0}, and arguing as for~\eqref{eq:prepre-bnd-G}, but now separating the term with $n=1$ and writing it in form of~\eqref{eq:diff-eps-form}, we get for all  $r\ge0$,
\begin{equation*}
\big|\nabla^{\alpha-e_j}_v\big(\tfrac1{|\e(k,k\cdot v;\nabla F_g)|^2}\big)\big|
\lesssim_{V,\alpha,\delta,r}
1+\|\rv^{-r}\rnabla^{|\alpha|}g\|_{\Ld^2(\R^d)}^{|\alpha|}
+\Big|\int_{\R^d}\tfrac{k\cdot(\nabla\nabla^{\alpha-e_j} F_g)_{**}}{k\cdot(v-v_{**})-i0}dv_{**}\Big|.
\end{equation*}
Inserting this into~\eqref{eq:def-Ggamalph} and arguing as in~\eqref{eq:Galphalph-impr}, we obtain
\begin{multline*}
\Big|\int_{\R^d}h_1\cdot G_{e_j}^\alpha(g,h)\,h_2\Big|
\,\lesssim\,T^j_\alpha(g,h;h_1,h_2)\\
+\|h_1\|_{\Ld^2_A(\R^d)}\|\langle\nabla\rangle^\delta h_2\|_{\Ld^2_A(\R^d)}\|\rv^{-r}\langle\nabla\rangle h\|_{\Ld^2(\R^d)}\Big(1+\|\rv^{-r}\rnabla^{|\alpha|}g\|_{\Ld^2(\R^d)}^{|\alpha|}\Big),
\end{multline*}
where we have set for abbreviation
\begin{multline*}
T^j_\alpha(g,h;h_1,h_2)\,:=\,
\int_{\R^d}|k|^2\widehat V(k)^2\\
\times\bigg(\iint_{\R^d\times\R^d}\delta(k\cdot(v-v_*))
|h_1||h_2||\nabla_{j,*}(\sqrt\mu_*h_*)|\Big|\int_{\R^d}\tfrac{k\cdot(\nabla\nabla^{\alpha-e_j} (\sqrt\mu g))_{**}}{k\cdot(v-v_{**})-i0}dv_{**}\Big|\,dvdv_*\bigg)\dbar k.
\end{multline*}
Repeating the proof of~\eqref{eq:bnd-Talph} to estimate $T^j_\alpha(g,h;h_1,h_2)$, and improving on the weights as in~\eqref{eq:improved-Gbnd0}, the claim~\eqref{eq:Galph1-impr-re} follows.

\medskip
\substep{3.2} Case $\gamma=0$ with $d>2$: prove that for all $\delta>0$ and $r\ge0$,
\begin{multline}\label{eq:bnd-Gh1h20-re}
\Big|\int_{\R^d}h_1\cdot G_0^\alpha(g,h)\,h_2\Big|
\,\lesssim_{V,\alpha,\delta,r}\,|S_\alpha(g,h;h_1,h_2)|\\
+\Big(1+\|\rv^{-r}\rnabla^{|\alpha|\vee(\frac52+\delta)}g\|_{\Ld^2(\R^d)}^{|\alpha|}\Big)\\
\times\|h_1\|_{\Ld^2_A(\R^d)}\|\rnabla^{\frac12+\delta} h_2\|_{\Ld^2_A(\R^d)}\|\rv^{-r}\rnabla^{\frac12+\delta}h\|_{\Ld^2(\R^d)},
\end{multline}
where we have set for abbreviation
\begin{multline}\label{eq:def-Sabbr0}
S_\alpha(g,h;h_1,h_2)\,:=\,\iiint_{\R^d\times\R^d\times\R^d}(k\cdot h_1)(k\cdot h_2)\,\pi\widehat V(k)^3\tfrac{\delta(k\cdot(v-v_*))}{|\e(k,k\cdot v;\nabla F_g)|^2}\\
\times\Re\Big(\tfrac{1}{\e(k,k\cdot v;\nabla F_g)}\int_{\R^d}\tfrac{k\cdot(\nabla\nabla^\alpha (\sqrt\mu g))_{**}}{k\cdot(v-v_{**})-i0}dv_{**}\Big)\sqrt\mu_*h_*\,dvdv_*\dbar k,
\end{multline}
The estimation of the latter is postponed to the next substep.

\medskip\noindent
Starting from~\eqref{eq:diff-1/eps0}, and applying again Lemma~\ref{lem:eps-bndd-R}(ii)--(iii), but now setting aside the term with $n=1$ and separately estimating the terms with $n=2$, we find for all~$r\ge0$,
\begin{multline*}
\Big|\nabla^{\alpha}_v\big(\tfrac1{|\e(k,k\cdot v;\nabla F_g)|^2}\big)+\tfrac2{|\e(k,k\cdot v;\nabla F_g)|^2}\Re\tfrac{\nabla_v^\alpha\e(k,k\cdot v;\nabla F_g)}{\e(k,k\cdot v;\nabla F_g)}\Big|\\
\,\lesssim_{V,\alpha,\delta,r}\,
1+\|\rv^{-r}\rnabla^{|\alpha|}g\|_{\Ld^2(\R^d)}^{|\alpha|}\\
+\mathds1_{|\alpha|\ge2}\sum_{e_j\le\alpha}|\nabla^{\alpha-e_j}\e(k,k\cdot v;\nabla F_g)||\nabla_j\e(k,k\cdot v;\nabla F_g)|.
\end{multline*}
Recalling~\eqref{eq:diff-eps-form}, and applying again Lemma~\ref{lem:eps-bndd-R}(ii)--(iii), we get
\begin{multline*}
\bigg|\nabla^{\alpha}_v\big(\tfrac1{|\e(k,k\cdot v;\nabla F_g)|^2}\big)+\tfrac{2\widehat V(k)}{|\e(k,k\cdot v;\nabla F_g)|^2}\Re\Big(\tfrac{1}{\e(k,k\cdot v;\nabla F_g)}\int_{\R^d}\tfrac{k\cdot(\nabla\nabla^\alpha (\sqrt\mu g))_{**}}{k\cdot(v-v_{**})-i0}dv_{**}\Big)\bigg|\\
\,\lesssim_{V,\alpha,\delta,r}\,1+\|\rv^{-r}\rnabla^{|\alpha|}g\|_{\Ld^2(\R^d)}^{|\alpha|}\\
+\mathds1_{|\alpha|\ge2}\Big(1+\|\rv^{-r}\langle\nabla\rangle^{\frac52+\delta} g\|_{\Ld^2(\R^d)}\Big)\sum_{j}\Big|\int_{\R^d}\tfrac{k\cdot(\nabla\nabla^{\alpha-e_j}(\sqrt\mu g))_{**}}{k\cdot(v-v_{**})-i0}dv_{**}\Big|.
\end{multline*}
Inserting this into~\eqref{eq:def-Ggamalph} and arguing as in~\eqref{eq:bnd-Ggambet-easy0}, we deduce for all vector fields $h_1,h_2$,
\begin{align}\label{eq:bnd-Gh1h20}
&\Big|\int_{\R^d}h_1\cdot G_0^\alpha(g,h)\,h_2\Big|
\,\lesssim_{V,\alpha,\delta,r}\,|S_\alpha(g,h;h_1,h_2)|\\
&\hspace{2cm}+\mathds1_{|\alpha|\ge2}\Big(1+\|\rv^{-r}\langle\nabla\rangle^{\frac52+\delta} g\|_{\Ld^2(\R^d)}\Big)\sum_jT_{\alpha-e_j}(g,h;h_1,h_2)\nonumber\\
&\quad+\|\rv^{-\frac12}h_1\|_{\Ld^2(\R^d)}\|\rv^{-\frac12}h_2\|_{\Ld^2(\R^d)}
\|\rv^{-r}\langle\nabla\rangle^\delta h\|_{\Ld^2(\R^d)}\Big(1+\|\rv^{-r}\rnabla^{|\alpha|}g\|_{\Ld^2(\R^d)}^{|\alpha|}\Big),\nonumber
\end{align}
where $S_\alpha(g,h;h_1,h_2)$ is defined in~\eqref{eq:def-Sabbr0} and where we recall~\eqref{eq:def-Tabbr0},
\begin{multline*}
T_{\alpha-e_j}(g,h;h_1,h_2)\,:=\,\int_{\R^d}|k|^2\widehat V(k)^2\\
\times\bigg(\iint_{\R^d\times\R^d}\delta(k\cdot(v-v_*))|h_1||h_2|\sqrt\mu_*|h_*|\Big|\int_{\R^d}\tfrac{k\cdot(\nabla\nabla^{\alpha-e_j}(\sqrt\mu g))_{**}}{k\cdot(v-v_{**})-i0}dv_{**}\Big|\,dvdv_*\bigg)\dbar k.
\end{multline*}
Recalling the estimation~\eqref{eq:bnd-Talph} on the latter, inserting it into~\eqref{eq:bnd-Gh1h20}, and improving on the weights as in~\eqref{eq:improved-Gbnd0}, the claim~\eqref{eq:bnd-Gh1h20-re} follows.

\medskip
\substep{3.3} Estimation of $S_\alpha(g,h;h_1,h_2)$ with $d>2$: prove that for all $\delta>0$ and $r\ge0$,
\begin{multline}\label{eq:est-Salpghhh-d>}
|S_\alpha(g,h;h_1,h_2)|
\,\lesssim_{V,r,r_0,\delta}\,
\|h_1\|_{\Ld^2_A(\R^d)}\|\langle\nabla\rangle^\frac12 h_2\|_{\Ld^2_A(\R^d)}\|\rv^{-r}\langle\nabla\rangle^{\frac32+\delta}h\|_{\Ld^2(\R^d)}\\
\times\|\rv^{-r}\rnabla^{|\alpha|}g\|_{\Ld^2(\R^d)}\Big(1+\|\rv^{-r}\langle\nabla\rangle^{\frac52+\delta}g\|_{\Ld^2(\R^d)}\Big).
\end{multline}
Starting point is the following integration by parts,
\[\int_{\R^d}\tfrac{k\cdot(\nabla\nabla^\alpha (\sqrt\mu g))_{**}}{k\cdot(v-v_{**})-i0}dv_{**}\,=\,|k|^2\nabla_{k}\cdot\Big(\int_{\R^d}\tfrac{v-v_{**}}{|v-v_{**}|^2}\tfrac{(\nabla^{\alpha} (\sqrt\mu g))_{**}}{k\cdot(v-v_{**})-i0}dv_{**}\Big).\]
Inserting this into definition~\eqref{eq:def-Sabbr0} for $S_\alpha(g,h;h_1,h_2)$, integrating by parts with respect to~$k$, and reorganizing the integrals, we are led to
\begin{equation}\label{eq:1strewr-S}
S_\alpha(g,h;h_1,h_2)\,=\,
-\Re\int_{\R^d}\bigg(\iint_{\R^d\times\R^d}H^j_{h;h_1,h_2}(k,v)\tfrac{(v-v_{**})_j}{|v-v_{**}|^2}\tfrac{(\nabla^{\alpha} (\sqrt\mu g))_{**}}{k\cdot(v-v_{**})-i0}\,dvdv_{**}\bigg)\dbar k,
\end{equation}
where we have set for abbreviation
\begin{equation*}
H^j_{h;h_1,h_2}(k,v)\,:=\,\nabla_{k_j}\bigg(\tfrac{|k|^2(k\cdot h_1)(k\cdot h_2)\,\pi\widehat V(k)^3}{\e(k,k\cdot v;\nabla F_g)|\e(k,k\cdot v;\nabla F_g)|^2}\Big(\int_{\R^d}\delta(k\cdot(v-v_*))\sqrt\mu_*h_*\,dv_*\Big)\bigg).
\end{equation*}
Evaluating the $k$-derivative and integrating by parts, the latter expression takes on the following guise,
\begin{multline*}
H^j_{h;h_1,h_2}(k,v)\,=\,
\tfrac{\big(\nabla_{k_j}-\frac{k_j}{|k|^2}\big)\big(|k|^2(k\cdot h_1)(k\cdot h_2)\,\pi\widehat V(k)^3\big)}{\e(k,k\cdot v;\nabla F_g)|\e(k,k\cdot v;\nabla F_g)|^2}\Big(\int_{\R^d}\delta(k\cdot(v-v_*))\sqrt\mu_*h_*\,dv_*\Big)\\
-\tfrac{2|k|^2(k\cdot h_1)(k\cdot h_2)\,\pi\widehat V(k)^3}{\e(k,k\cdot v;\nabla F_g)^2|\e(k,k\cdot v;\nabla F_g)|^2}\Big(\nabla_{k_j}\e(k,k\cdot v;\nabla F_g)\Big)\Big(\int_{\R^d}\delta(k\cdot(v-v_*))\sqrt\mu_*h_*\,dv_*\Big)\\
-\tfrac{|k|^2(k\cdot h_1)(k\cdot h_2)\,\pi\widehat V(k)^3}{|\e(k,k\cdot v;\nabla F_g)|^4}\Big(\nabla_{k_j}\overline{\e(k,k\cdot v;\nabla F_g)}\Big)\Big(\int_{\R^d}\delta(k\cdot(v-v_*))\sqrt\mu_*h_*\,dv_*\Big)\\
+ \sum_{l=1}^d \tfrac{k_l(k\cdot h_1)(k\cdot h_2)\,\pi\widehat V(k)^3}{\e(k,k\cdot v;\nabla F_g)|\e(k,k\cdot v;\nabla F_g)|^2}\Big(\int_{\R^d}\delta(k\cdot(v-v_*))(v-v_*)_j\nabla_{v_{*,l}}(\sqrt\mu h)_*\,dv_*\Big).
\end{multline*}
Note that another integration by parts yields as in~\eqref{eq:form-nabk-eps},
\begin{multline*}
\nabla_{k_j}\e(k,k\cdot v;\nabla F_g)\,=\,\big(\nabla_j\widehat V(k)-\tfrac{k_j}{|k|^2}\widehat V(k)\big)\int_{\R^d}\tfrac{\hat k\cdot\nabla F_g(v_*)}{\hat k\cdot(v-v_*)-i0}\,dv_*\\
+\tfrac1{|k|}\widehat V(k)\int_{\R^d}\tfrac{\nabla_j F_g(v_*)}{\hat k\cdot(v-v_*)-i0}\,dv_*
+\tfrac{1}{|k|}\widehat V(k)\int_{\R^d}(v-v_*)_j\tfrac{(\hat k\cdot\nabla)^2 F_g(v_*)}{\hat k\cdot(v-v_*)-i0}\,dv_*,
\end{multline*}
and thus, using the Sobolev inequality and the $\Ld^2$ boundedness of the Hilbert transform, for all $\delta>0$ and $r\ge0$,
\begin{equation*}
|\nabla_{k}\e(k,k\cdot v;\nabla F_g)|\,\lesssim_{r,\delta}\,\rv\Big(\tfrac{1}{|k|}\widehat V(k)+|\nabla\widehat V(k)|\Big)\Big(1+\|\rv^{-r}\langle\nabla\rangle^{\frac52+\delta}g\|_{\Ld^2(\R^d)}\Big).
\end{equation*}
Inserting this into~\eqref{eq:1strewr-S}, together with the above identity for $H^j_{h;h_1,h_2}$, and appealing to Lemma~\ref{lem:eps-bndd-R}(ii)--(iii), we are led to
\begin{multline*}
|S_\alpha(g,h;h_1,h_2)|\lesssim_{V,r,\delta}\,
\Big(1+\|\rv^{-r}\langle\nabla\rangle^{\frac52+\delta}g\|_{\Ld^2(\R^d)}\Big)
\int_{\R^d}\Big(|k|\widehat V(k)^3+|k|^2\widehat V(k)^2|\nabla\widehat V(k)|\Big)\\
\hspace{2cm}\times\bigg\{\int_{\R^d}|h_1||h_2|\Big(\int_{\R^d}\delta(\hat k\cdot(v-v_*))\,\langle v_*\rangle|(\langle\nabla\rangle(\sqrt\mu h))_*|\,dv_*\Big)\\
\hspace{5cm}\times\Big|\rv\int_{\R^d}\tfrac{v-v_{*}}{|v-v_{*}|^2}\tfrac{(\nabla^{\alpha} (\sqrt\mu g))_{*}}{\hat k\cdot(v-v_{*})-i0}dv_*\Big|\,dv\bigg\}\,\dbar k.
\end{multline*}
Smuggling in a power of the weight~$\langle\hat k\cdot v\rangle$, and noting that the Sobolev inequality yields for all $\delta>0$ and $r,r_0\ge0$,
\[\int_{\R^d}\delta(\hat k\cdot(v-v_*))\,\langle v_*\rangle^{r_0}|(\langle\nabla\rangle(\sqrt\mu h))_*|\,dv_*\,\lesssim_{r,r_0,\delta}\,\|\rv^{-r}\langle\nabla\rangle^{\frac32+\delta}h\|_{\Ld^2(\R^d)},\]
the above becomes
\begin{multline}\label{eq:pre-est-Salph}
|S_\alpha(g,h;h_1,h_2)|
\,\lesssim_{V,r,r_0,\delta}\,
\Big(1+\|\rv^{-r}\langle\nabla\rangle^{\frac52+\delta}g\|_{\Ld^2(\R^d)}\Big)\|\rv^{-r}\langle\nabla\rangle^{\frac32+\delta}h\|_{\Ld^2(\R^d)}\\
\times\int_{\R^d}\Big(|k|\widehat V(k)^3+|k|^2\widehat V(k)^2|\nabla\widehat V(k)|\Big)\,U_\alpha^k\big(g;\langle\hat k\cdot v\rangle^{-r_0}h_1,\langle\hat k\cdot v\rangle^{-r_0}h_2\big)\,\dbar k,
\end{multline}
where we have set for abbreviation,
\begin{equation}\label{eq:def-U}
U_\alpha^k(g;h_1,h_2)\,:=\,\int_{\R^d}|h_1||h_2|\Big|\rv\int_{\R^d}\tfrac{v-v_{*}}{|v-v_{*}|^2}\tfrac{(\nabla^{\alpha} (\sqrt\mu g))_{*}}{\hat k\cdot(v-v_{*})-i0}dv_*\Big|\,dv.
\end{equation}
We turn to the estimation of this quantity.
Let $k\in\R^d\setminus\{0\}$ be fixed.
Decomposing velocity variables $v\in\R^d$ into $v=(v_0,v_1)$ with $v_0=v\cdot\hat k$ and $v_1=v-v_0\hat k\in\hat k^\bot$, we can write
\begin{multline*}
U_\alpha^k(g;h_1,h_2)\,=\,\int_\R\int_{\R^{d-1}}|h_1(v_0,v_1)||h_2(v_0,v_1)|\\
\times\Big|\langle(v_0,v_1)\rangle\int_\R\int_{\R^{d-1}}\tfrac{(v_0-v_{0*},v_1-v_{1*})}{|v_0-v_{0*}|^2+|v_1-v_{1*}|^2}\tfrac{(\nabla^{\alpha} (\sqrt\mu g))_*}{(v_0-v_{0*})-i0}dv_{1*}dv_{0*}\Big|\,dv_1dv_0.
\end{multline*}
Estimating $\langle (v_0,v_1)\rangle\le1+|v_0|+|v_1|$, and using $\frac{v_0-v_{0*}}{(v_0-v_{0*})-i0}=1$, we are led to
\begin{multline}\label{eq:bnd-main-CZRiesz-int-decomp}
U_\alpha^k(g;h_1,h_2)\,\lesssim\,
\int_\R\int_{\R^{d-1}}|h_1(v_0,v_1)||h_2(v_0,v_1)|\\
\times\bigg\{\Big|\int_\R\int_{\R^{d-1}}\sigma(v_0-v_{0*},v_1-v_{1*})(\nabla^{\alpha} (\sqrt\mu g))_*\,dv_{1*}dv_{0*}\Big|\\
\hspace{3cm}+\Big|\int_\R\int_{\R^{d-1}}\sigma(v_0-v_{0*},v_1-v_{1*}) (v_{0*},v_{1*})(\nabla^{\alpha} (\sqrt\mu g))_*\,dv_{1*}dv_{0*}\Big|\\
\hspace{3.55cm}+\Big|\int_\R\int_{\R^{d-1}}(v_1-v_{1*})\otimes\sigma(v_0-v_{0*},v_1-v_{1*})(\nabla^{\alpha} (\sqrt\mu g))_*\,dv_{1*}dv_{0*}\Big|\bigg\}\\
\hspace{-3cm}+\int_{\R^{d}}|h_1||h_2|\Big(\int_{\R^d}\tfrac{\langle v_*\rangle|(\nabla^{\alpha} (\sqrt\mu g))_*|}{|v-v_*|^2}\,dv_*\Big)\,dv\\
+\int_{\R^{d}}|h_1||h_2|\Big(\int_{\R^d}\tfrac{|(\nabla^{\alpha} (\sqrt\mu g))_*|}{|v-v_*|}\,dv_*\Big)\,dv,
\end{multline}
in terms of the symbol
\[\sigma(v_0,v_1)\,:=\,\tfrac{1}{v_0-i0}\tfrac{v_1}{|v_0|^2+|v_1|^2}.\]
We denote by $\tilde f(w_0,v_1)$ the Fourier transform of a function $f(v_0,v_1)$ in its first variable.
As the Fourier transforms of $\frac1{v_0-i0}$ and $\frac1{v_0^2+1}$ are given by $i\pi(1-\sgn(w_0))$ and $\pi e^{-|w_0|}$, respectively,
the Fourier transform of $\sigma$ in its first variable takes on the following explicit guise,
\[\tilde\sigma(w_0,v_1)\,=\,2i\pi^2\tfrac{v_1}{|v_1|}\int_{w_0}^\infty\,e^{-|w_0'||v_1|}\,dw_0'\,=\,2i\pi^2\tfrac{v_1}{|v_1|^2}\times\left\{\begin{array}{lll}
e^{-|w_0||v_1|}&:&w_0>0,\\
2-e^{-|w_0||v_1|}&:&w_0<0,
\end{array}\right.\]
hence,
\[|\tilde\sigma(w_0,v_1)|\le4\pi^2\tfrac{1}{|v_1|}.\]
Taking Fourier transform in the $v_0$-variables, using Parseval's identity, inserting this bound on the symbol $\tilde\sigma$, using the triangle inequality and again Parseval's identity, we find for any $f\in C^\infty_c(\R^d)$ and $1\le p\le\infty$,
\begin{eqnarray*}
\lefteqn{\Big\|\int_{\R}\int_{\R^{d-1}}\sigma(v_0-v_{0*},v_1-v_{1*})\,f(v_{0*},v_{1*})\,dv_{1*}dv_{0*}\Big\|_{\Ld^p_{v_1}(\R^{d-1};\Ld^2_{v_0}(\R))}}\\
&\simeq&\Big\|\int_{\R^{d-1}}\tilde \sigma(w_0,v_1-v_{1*})\,\tilde f(w_{0},v_{1*})\,dv_{1*}\Big\|_{\Ld^p_{v_1}(\R^{d-1};\Ld^2_{w_0}(\R))}\\
&\lesssim&\Big\|\int_{\R^{d-1}}\tfrac{|\tilde f(w_{0},v_{1*})|}{|v_1-v_{1*}|}dv_{1*}\Big\|_{\Ld^p_{v_1}(\R^{d-1};\Ld^2_{w_0}(\R))}\\
&\lesssim&\Big\|\int_{\R^{d-1}}\tfrac{\|f(\cdot,v_{1*})\|_{\Ld^2(\R)}}{|v_1-v_{1*}|}dv_{1*}\Big\|_{\Ld^p_{v_1}(\R^{d-1})},
\end{eqnarray*}
and therefore, by the Hardy--Littlewood--Sobolev inequality, in dimension $d>2$, for all~\mbox{$d-1<p<\infty$} and $q=\frac{(d-1)p}{d-1+(d-2)p}$,
\begin{multline*}
\Big\|\int_{\R}\int_{\R^{d-1}}\sigma(v_0-v_{0*},v_1-v_{1*})\,f(v_{0*},v_{1*})\,dv_{1*}dv_{0*}\Big\|_{\Ld^p_{v_1}(\R^{d-1};\Ld^2_{v_0}(\R))}\\
\,\lesssim_{p}\,\|f(v_0,v_1)\|_{\Ld^q_{v_1}(\R^{d-1};\Ld^2_{v_0}(\R))}.
\end{multline*}
Similarly, we find
\begin{multline*}
\Big\|\int_{\R}\int_{\R^{d-1}}(v_1-v_{1*})\otimes\sigma(v_0-v_{0*},v_1-v_{1*})\,f(v_{0*},v_{1*})\,dv_{1*}dv_{0*}\Big\|_{\Ld^\infty_{v_1}(\R^{d-1};\Ld^2_{v_0}(\R))}\\
\,\lesssim\,\|f(v_0,v_1)\|_{\Ld^1_{v_1}(\R^{d-1};\Ld^2_{v_0}(\R))}.
\end{multline*}
Applying these two bounds to estimate the first terms in~\eqref{eq:bnd-main-CZRiesz-int-decomp}, using the Hardy--Littlewood--Sobolev inequality for the last two terms, and using the Sobolev inequality and the decay of $\mu$ to reduce to $\Ld^2$ norms, we easily deduce in dimension $d>2$, for all $r\ge0$,
\begin{equation*}
U_\alpha^k(g;h_1,h_2)\,\lesssim_r\,\|h_1\|_{\Ld^2(\R^d)}\|\langle\nabla\rangle^\frac12 h_2\|_{\Ld^2(\R^d)}\|\rv^{-r}\rnabla^{|\alpha|}g\|_{\Ld^2(\R^d)}.
\end{equation*}
Inserting this into~\eqref{eq:pre-est-Salph}, we are led to
\begin{multline*}
|S_\alpha(g,h;h_1,h_2)|
\,\lesssim_{V,r,r_0,\delta}\,
\Big(1+\|\rv^{-r}\langle\nabla\rangle^{\frac52+\delta}g\|_{\Ld^2(\R^d)}\Big)\\
\times\|\rv^{-r}\rnabla^{|\alpha|}g\|_{\Ld^2(\R^d)}\|\rv^{-r}\langle\nabla\rangle^{\frac32+\delta}h\|_{\Ld^2(\R^d)}\\
\times\int_{\R^d}\Big(|k|\widehat V(k)^3+|k|^2\widehat V(k)^2|\nabla\widehat V(k)|\Big)\,
\|\langle\hat k\cdot v\rangle^{-r_0}h_1\|_{\Ld^2(\R^d)}\|\langle\hat k\cdot v\rangle^{-r_0}\langle\nabla\rangle^\frac12 h_2\|_{\Ld^2(\R^d)}\,\dbar k.
\end{multline*}
Now evaluating the $k$-integral as in~\eqref{eq:weight-intK}, with $r_0>\frac12$, and noting that the integrability assumptions on $V$ ensure $\int_{\R^d}|k|\widehat V(k)^3+\int_{\R^d}|k|^2\widehat V(k)^2|\nabla\widehat V(k)|<\infty$, this becomes
\begin{multline*}
|S_\alpha(g,h;h_1,h_2)|
\,\lesssim_{V,r,r_0,\delta}\,
\Big(1+\|\rv^{-r}\langle\nabla\rangle^{\frac52+\delta}g\|_{\Ld^2(\R^d)}\Big)\|\rv^{-r}\rnabla^{|\alpha|}g\|_{\Ld^2(\R^d)}\\
\times\|\rv^{-r}\langle\nabla\rangle^{\frac32+\delta}h\|_{\Ld^2(\R^d)}
\|\rv^{-\frac12}h_1\|_{\Ld^2(\R^d)}\|\rv^{-\frac12}\langle\nabla\rangle^\frac12 h_2\|_{\Ld^2(\R^d)}.
\end{multline*}
Further improving on the weights $\rv^{-\frac12}$ as in~\eqref{eq:improved-Gbnd0}, the claim~\eqref{eq:est-Salpghhh-d>} follows.
\end{proof}

With the above estimates at hand, we may now establish the local well-posedness of the Lenard--Balescu equation~\eqref{eq:g-reform-re} close to Maxwellian equilibrium.

\begin{prop}[Local well-posedness close to equilibrium]\label{prop:loc-wp-equ}
Let $V\in\Ld^1\cap\dot H^\frac12(\R^d)$ be isotropic and positive definite.
For all $s\ge2$, there is a constant $C_{V,s}$ large enough such that the following holds: for all initial data $f^\circ\in H^s(\R^d)$ satisfying the positivity and smallness conditions,
\[F_{f^\circ}=\mu+\sqrt\mu f^\circ\ge0,\qquad\|f^\circ\|_{H^s(\R^d)}\le \tfrac1{C_{V,s}},\]
there exists $T>\frac1{C_{V,s}}$ and a unique strong solution $f\in \Ld^\infty([0,T];H^s(\R^d))$ of the Lenard--Balescu equation~\eqref{eq:g-reform-re} on $[0,T]$, and it still satisfies $F_f=\mu+\sqrt\mu f\ge0$.
\end{prop}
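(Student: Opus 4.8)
The plan is to convert the dissipation structure of the linearized operator $L$, together with the nonlinear bounds already established, into an a priori estimate in $H^s(\R^d)$, and then to construct the solution by a standard approximation argument in the spirit of~\cite{Guo-02}. Since $s\ge2>\tfrac32$, the smallness of $\|f^\circ\|_{H^s(\R^d)}$ together with the propagated bound $\|f^t\|_{H^s(\R^d)}\lesssim\tfrac1{C_{V,s}}$ ensures, via Lemma~\ref{lem:eps-bndd-R}(ii), that $\e(k,k\cdot v;\nabla F_f)$ stays uniformly non-degenerate, so that the collision kernel~\eqref{eq:LB-kernel} and the operators $\Bc(\nabla F_f)$, $\Bc_\circ$, $A$ are well-defined along the flow; a continuity argument will propagate this bound. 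We work with the reformulation~\eqref{eq:g-reform-re}, with $L$ split as in~\eqref{eq:defL-reform}.

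The core is an energy estimate. For a sufficiently smooth solution $f$ and a multi-index $\alpha$ with $|\alpha|\le s$, apply $\nabla^\alpha$ to~\eqref{eq:g-reform-re}, test against $\nabla^\alpha f$ in $\Ld^2(\R^d)$, and integrate by parts, using that the formal adjoint of $(\nabla-v)\cdot$ is $-(\nabla+v)$ and that $[\nabla^\alpha,(\nabla\pm v)]$ are of lower order. The leading contribution of the linear part is the coercive term $-\|(\nabla+v)\nabla^\alpha f\|_{\Ld^2_A(\R^d)}^2$ (Lemma~\ref{lem:A}(i)); the commutator $[\nabla^\alpha,A(\nabla+v)]f$ is handled by~\eqref{eq:hnabAh}, and the non-local term $(\nabla-v)\cdot(\sqrt\mu\,\Bc_\circ[(\nabla+v)f])$ by the genuine gain of one derivative in Lemma~\ref{lem:B0}(ii); each of these is bounded by $\eta\,\Dc(f)+C_{\eta,V,s}\|f\|_{H^s(\R^d)}^2$ for arbitrary $\eta>0$, where $\Dc(f):=\sum_{|\beta|\le s}\|(\nabla+v)\nabla^\beta f\|_{\Ld^2_A(\R^d)}^2$. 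For $N$ one expands $\nabla^\alpha N(f)$ by Leibniz; the critical terms — those in which every derivative falls on $(\nabla+v)f$ or on $\Bc(\nabla F_f)[(\nabla+v)f]$ — are controlled by~\eqref{eq:lemB-1} and~\eqref{eq:lemB-2} (exploiting the $v\leftrightarrow v_*$ symmetry of the Dirac mass $\delta(k\cdot(v-v_*))$ to trade velocity growth for the Gaussian weight $\sqrt\mu_*$), while all remaining terms carry a positive power of $\|f\|_{H^s(\R^d)}$ and are lower order. Summing over $|\alpha|\le s$ yields, as long as $\|f\|_{H^s(\R^d)}$ stays small,
\begin{equation*}
\tfrac12\tfrac{d}{dt}\|f\|_{H^s(\R^d)}^2+c\,\Dc(f)\,\le\,\Phi\big(\|f\|_{H^s(\R^d)}\big)\Big(\|f\|_{H^s(\R^d)}\,\Dc(f)+\|f\|_{H^s(\R^d)}^2\Big),
\end{equation*}
for some $c>0$ and a nondecreasing $\Phi$ depending only on $V,s$. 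With $C_{V,s}$ large enough, a continuity argument shows that while $\|f^t\|_{H^s(\R^d)}\le\tfrac2{C_{V,s}}$ the first right-hand term is $\le\tfrac c2\Dc(f)$ and can be absorbed, leaving $\tfrac{d}{dt}\|f^t\|_{H^s(\R^d)}^2\le C'\|f^t\|_{H^s(\R^d)}^2$ with $C'=C'_{V,s}$; hence $\|f^t\|_{H^s(\R^d)}\le e^{C't}\|f^\circ\|_{H^s(\R^d)}$ remains $\le\tfrac2{C_{V,s}}$ on $[0,T]$ for $T:=\tfrac{\log2}{C'}$, and enlarging $C_{V,s}$ further we ensure $T>\tfrac1{C_{V,s}}$.

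Given this a priori estimate, existence follows by a standard scheme (as in~\cite{Guo-02}): a linearized iteration in which, given $f^n$, the iterate $f^{n+1}$ solves the linear degenerate parabolic equation obtained from~\eqref{eq:g-reform-re} by freezing the dispersion function and the diffusion coefficient $\int_{\R^d}B(v,v-v_*;\nabla F_{f^n})F_{f^n,*}\,dv_*$ at the previous step (solvable, after a mild sub-regularization, by Galerkin's method); the bounds above hold uniformly along the iteration, and strong compactness (Aubin--Lions, since $\partial_tf^n$ is bounded in a negative-order space) lets one pass to the limit in all terms, including the non-local ones, producing a strong solution $f\in\Ld^\infty([0,T];H^s(\R^d))\cap C([0,T];H^{s-2}(\R^d))$. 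Positivity is preserved by this scheme: initializing at $F^0:=F^\circ\ge0$, and noting that $\int_{\R^d}B(v,v-v_*;\nabla F_{f^n})F_{f^n,*}\,dv_*\ge0$ whenever $F_{f^n}\ge0$, the maximum principle for the (linear, nonnegatively-diffusive) equation for $F_{f^{n+1}}$ gives $F_{f^{n+1}}\ge0$, hence $F_f=\mu+\sqrt\mu f\ge0$ in the limit. Uniqueness follows from an $\Ld^2$ estimate for the difference $d:=f_1-f_2$ of two such small solutions: $\partial_td=L[d]+(N(f_1)-N(f_2))$, and the bilinear structure of $L,N$ (Lemmas~\ref{lem:A},~\ref{lem:B0},~\ref{lem:B}) bounds the right-hand side by $C_{V,s}\|d\|_{\Ld^2(\R^d)}^2$ plus an absorbable multiple of the dissipation of $d$, so $\tfrac{d}{dt}\|d^t\|_{\Ld^2(\R^d)}^2\lesssim_{V,s}\|d^t\|_{\Ld^2(\R^d)}^2$ and $d\equiv0$.

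The main obstacle is the a priori estimate — specifically, the bookkeeping of the critical nonlinear contributions and the verification that their apparent loss of regularity is exactly compensated. Because $\Bc(\nabla F_f)$ is not of convolution type, the naive integration-by-parts gains available in the Landau case are lost, and one must instead rely on the tensor structure of the kernel and on the fact that the Sobolev embedding is applied only on the one-dimensional lines $\hat k\R$; this is precisely what~\eqref{eq:lemB-1}--\eqref{eq:lemB-2} are designed to deliver. Running the continuity argument for the non-degeneracy of $\e$ simultaneously with the $H^s$ bound requires mild care, whereas the approximation, positivity, and uniqueness steps are routine once the a priori estimate is established.
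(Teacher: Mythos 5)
Your proposal matches the paper's proof in all essentials: the same linearized iteration with the dispersion function and diffusion coefficient frozen at the previous iterate, the same energy estimate built on Lemma~\ref{lem:A}, Lemma~\ref{lem:B0}(ii), and the bounds~\eqref{eq:lemB-1}--\eqref{eq:lemB-2} of Lemma~\ref{lem:B}, the same use of the $v\leftrightarrow v_*$ symmetry of $\delta(k\cdot(v-v_*))$, and the same maximum-principle argument for positivity. The only presentational difference is that you state the a~priori $H^s$-estimate at the level of the nonlinear solution before describing the iteration, whereas the paper derives it directly on the linearized iterates~\eqref{eq:g-n-approx-rewr}; the content and constants are the same.
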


\begin{proof}
We proceed by constructing a sequence $\{f_n\}_n$ of approximate solutions: we choose the $0$th-order approximation as the initial data,
\[f_0:=f^\circ,\]
and next, for all $n\ge0$, given the $n$th-order approximation $f_n$, we iteratively define $f_{n+1}$ as the solution of the following linear Cauchy problem,
\begin{equation}\label{eq:g-n-approx}
\left\{\begin{array}{l}
\partial_tf_{n+1}=(\nabla- v)\cdot\Big(\Bc_n[\sqrt\mu+f_{n}]\,(\nabla+ v)f_{n+1}-(\sqrt\mu+f_{n+1})\Bc_n[(\nabla+ v)f_{n}]\Big),\\
f_{n+1}|_{t=0}=f^\circ,
\end{array}\right.
\end{equation}
where we have set for abbreviation
\[\Bc_n[g]\,:=\,\Bc(\nabla F_{f_n})[g]\,=\,\int_{\R^d}B(v,v-v_*;\nabla F_{f_n})\,\sqrt\mu_*g_*\,dv_*.\]
Noting that definitions of $\Bc_n$ and of the collision kernel $B$ ensure $\Bc_n[vg]=\Bc_n[g]v$ for all scalar fields $g$, cf.~\eqref{eq:LB-kernel},
we can rewrite equation~\eqref{eq:g-n-approx} as the following linear parabolic equation,
\begin{equation}\label{eq:g-n-approx-rewr}
\left\{\begin{array}{l}
\partial_tf_{n+1}=\nabla\cdot A_n\nabla f_{n+1}-b_n\cdot \nabla f_{n+1}-\nabla\cdot (b_n'f_{n+1})+c_nf_{n+1}\\
\hspace{7cm}-(\nabla-v)\cdot(\sqrt\mu(b_n+b_n')),\\
f_{n+1}|_{t=0}=f^\circ,
\end{array}\right.
\end{equation}
in terms of the coefficients
\begin{eqnarray}
A_n&:=&\Bc_n[\sqrt\mu+f_{n}],\label{eq:def-Anbn}\\
b_n&:=&A_nv~=~\Bc_n[v(\sqrt\mu+f_{n})],\nonumber\\
b_n'&:=&\Bc_n[\nabla (\sqrt\mu+f_{n})],\nonumber\\
c_n&:=&v\cdot b_n'.\nonumber
\end{eqnarray}
Let $s\ge2$ be fixed, let $C_0$ be a constant to be chosen large enough, and assume
\begin{equation}\label{eq:ass-f0-reg}
F_{f^\circ}=\mu+\sqrt\mu f^\circ\ge0,\qquad \|f^\circ\|_{H^s(\R^d)}\le\tfrac1{4C_0}.
\end{equation}
We shall control the sequence $(f_n)_n$ by induction. Given $n\ge0$, assume that $f_n$ is well-defined on some time interval with $\mu+\sqrt\mu f_n\ge0$, and define the maximal time
\begin{equation}\label{eq:def-Tn}
T_n\,:=\,1\wedge\max\Big\{T\ge0:\|f_n\|_{\Ld^\infty([0,T];H^s(\R^d))}^2+\tfrac1{C_0}\|\nabla\rnabla^sf_n\|_{\Ld^2([0,T];\Ld^2_A(\R^d))}^2\le \tfrac1{C_0^2}\Big\}.
\end{equation}
By definition of $\Bc_n$ and of the collision kernel $B$, cf.~\eqref{eq:LB-kernel}, choosing $C_0$ large enough, Lemma~\ref{lem:eps-bndd-R}(ii) ensures that coefficients $A_n,b_n,b_n',c_n$ are all well-defined on the time interval~$[0,T_n]$.
In addition, the assumption $\mu+\sqrt\mu f_n\ge0$ ensures $A_n\ge0$.
Up to regularizing coefficients and initial data, linear parabolic theory ensures that~\eqref{eq:g-n-approx-rewr} admits a unique smooth solution on $[0,T_n]$. We shall estimate its~$H^s$~norm uniformly with respect to regularization, and we split the proof into three steps.

\medskip
\step{1} Prove that for all $\alpha\ge0$, on the time interval $[0,T_n]$,
\begin{multline}\label{eq:estHs-An}
\int_{\R^d}(\nabla^\alpha g)\nabla^\alpha(\nabla\cdot A_n\nabla g)\,\le\,
-\tfrac1{C_V}\|\nabla\nabla^\alpha g\|_{\Ld^2_A(\R^d)}^2\\
\hspace{-5cm}+C_{V,\alpha}\|\nabla\nabla^\alpha g\|_{\Ld^2_A(\R^d)}\|g\|_{H^{|\alpha|\vee2}(\R^d)}\\
\times\bigg(1+\|f_n\|_{H^{|\alpha|}(\R^d)}^{|\alpha|+1}+\|\nabla\rnabla^{|\alpha|}f_n\|_{\Ld^2_A(\R^d)}\Big(1+\|f_n\|_{H^{|\alpha|}(\R^d)}^{|\alpha|}\Big)\bigg).
\end{multline}
In view of~\eqref{eq:def-Tn}, provided $C_0$ is large enough,
we note that Lemma~\ref{lem:eps-bndd-R}(ii) entails on the time interval $[0,T_n]$,
\[\Bc_n[\sqrt\mu]\,\simeq_V\, A,\]
and therefore
\[\int_{\R^d}(\nabla\nabla^\alpha g)\cdot \Bc_n[\sqrt\mu](\nabla\nabla^\alpha g)\,\simeq_V\,\|\nabla\nabla^\alpha g\|_{\Ld^2_A(\R^d)}^2.\]
Decomposing $A_n=\Bc_n[\sqrt\mu]+\Bc_n[f_n]$, we deduce
\[\int_{\R^d}(\nabla\nabla^\alpha g)\cdot A_n(\nabla\nabla^\alpha g)\,\ge\,\tfrac1{C_V}\|\nabla\nabla^\alpha g\|_{\Ld^2_A(\R^d)}^2-\Big|\int_{\R^d}(\nabla\nabla^\alpha g)\cdot \Bc_n[f_n](\nabla\nabla^\alpha g)\Big|,\]
and thus, appealing to~\eqref{eq:lemB-1} in Lemma~\ref{lem:B} to estimate the last term,
\[\int_{\R^d}(\nabla\nabla^\alpha g)\cdot A_n(\nabla\nabla^\alpha g)\,\ge\,\|\nabla\nabla^\alpha g\|_{\Ld^2_A(\R^d)}^2\Big(\tfrac1{C_V}-C_V\|f_n\|_{H^1(\R^d)}\Big).\]
In view of~\eqref{eq:def-Tn}, choosing $C_0\ge2C_V^2$, this yields on $[0,T_n]$,
\[\int_{\R^d}(\nabla\nabla^\alpha g)\cdot A_n(\nabla\nabla^\alpha g)\,\ge\,\tfrac1{2C_V}\|\nabla\nabla^\alpha g\|_{\Ld^2_A(\R^d)}^2.\]
By Leibniz' rule and an integration by parts, we then find
\begin{multline*}
\int_{\R^d}(\nabla^\alpha g)\nabla^\alpha(\nabla\cdot A_n\nabla g)\,\le\,
-\tfrac1{2C_V}\|\nabla\nabla^\alpha g\|_{\Ld^2_A(\R^d)}^2\\
-\sum_{0<\gamma\le\alpha}\binom\alpha\gamma\int_{\R^d}(\nabla\nabla^\alpha g)\cdot (\nabla^\gamma A_n)\,(\nabla\nabla^{\alpha-\gamma} g).
\end{multline*}
Now using Lemma~\ref{lem:B} to estimate the last sum, appealing to~\eqref{eq:lemB-1} or~\eqref{eq:lemB-2} depending on the value of $\gamma$, the claim~\eqref{eq:estHs-An} follows.

\medskip
\step{2} Prove that for all $\alpha\ge0$, on the time interval $[0,T_n]$,
\begin{multline}\label{eq:estHs-bn}
\bigg|\int_{\R^d}(\nabla^\alpha g)\nabla^\alpha\Big(-b_n\cdot\nabla g-\nabla\cdot(b_n'g)+c_ng-(\nabla-v)\cdot(\sqrt\mu(b_n+b_n'))\Big)\bigg|\\
\,\lesssim_{V,\alpha}\,
\Big(\|\nabla\rnabla^{|\alpha|}g\|_{\Ld^2_A(\R^d)}+\|g\|_{H^{|\alpha|}(\R^d)}\Big)\|g\|_{H^{|\alpha|\vee2}(\R^d)}\\
\times\bigg(1+\|f_n\|_{H^{|\alpha|\vee2}(\R^d)}^{|\alpha|+1}+\|\nabla\rnabla^{|\alpha|}f_n\|_{\Ld^2_A(\R^d)}\Big(1+\|f_n\|_{H^{|\alpha|\vee2}(\R^d)}^{|\alpha|}\Big)\bigg).
\end{multline}
By Leibniz' rule, we decompose
\begingroup\allowdisplaybreaks
\begin{eqnarray*}
\int_{\R^d}(\nabla^\alpha g)\nabla^\alpha(b_n\cdot\nabla g)
&=&\sum_{\gamma\le\alpha}\binom\alpha\gamma\int_{\R^d}(\nabla^\alpha g)(\nabla^\gamma b_n)\cdot(\nabla\nabla^{\alpha-\gamma} g),\\
\int_{\R^d}(\nabla^\alpha g)\nabla^\alpha\nabla\cdot(b_n' g)
&=&-\sum_{\gamma\le\alpha}\binom\alpha\gamma\int_{\R^d}(\nabla\nabla^\alpha g)\cdot(\nabla^\gamma b_n')(\nabla^{\alpha-\gamma} g),\\
\int_{\R^d}(\nabla^\alpha g)\nabla^\alpha(c_n g)
&=&\sum_{\gamma\le\alpha}\binom\alpha\gamma\int_{\R^d}(\nabla^\gamma c_n)(\nabla^\alpha g)(\nabla^{\alpha-\gamma} g),\\
\int_{\R^d}(\nabla^\alpha g)\nabla^\alpha(\nabla-v)\cdot(\sqrt\mu(b_n+b_n'))
&=&-\sum_{e_j\le\alpha}\int_{\R^d}((\nabla+v)\nabla^\alpha g)\cdot\nabla^\alpha(\sqrt\mu(b_n+b_n'))\\
&&\hspace{-0.4cm}-\sum_{e_j\le\alpha}\binom\alpha{e_j}\int_{\R^d}(e_j\nabla^\alpha g)\cdot\nabla^{\alpha-e_j}(\sqrt\mu(b_n+b_n')).
\end{eqnarray*}
\endgroup
Recalling the definition of $b_n,b_n',c_n$, cf.~\eqref{eq:def-Anbn}, and using Lemma~\ref{lem:B} to estimate the different terms, appealing again to~\eqref{eq:lemB-1} or~\eqref{eq:lemB-2} depending on the value of $\gamma$, the claim~\eqref{eq:estHs-bn} follows easily.

\medskip
\step3 Conclusion.\\
Combining~\eqref{eq:estHs-An} and~\eqref{eq:estHs-bn}, and taking advantage of the dissipation term in~\eqref{eq:estHs-An} to absorb factors~$\|\rnabla^{|\alpha|+1} g\|_{\Ld^2_A(\R^d)}$, we deduce for the solution $f_{n+1}$ of~\eqref{eq:g-n-approx-rewr} (up to regularization),
\begin{multline}\label{eq:Hs-evol-fn1}
\partial_t\|f_{n+1}\|_{H^s(\R^d)}^2
+\tfrac1{C_{V,s}}\|\nabla\rnabla^{s}f_{n+1}\|_{\Ld^2_A(\R^d)}^2\\
\,\le\,C_{V,s}\|f_{n+1}\|_{H^{s}(\R^d)}^2\Big(1+\|\nabla\rnabla^{s}f_n\|_{\Ld^2_A(\R^d)}^2\Big)\Big(1+\|f_n\|_{H^{s}(\R^d)}^{2(s+1)}\Big),
\end{multline}
hence, after time integration,
\begin{multline*}
\|f_{n+1}^t\|_{H^s(\R^d)}^2+\tfrac1{C_{V,s}}\|\nabla\rnabla^sf_{n+1}\|_{\Ld^2([0,t];\Ld^2_A(\R^d))}^2\\
\,\le\,
2\|f^\circ\|_{H^s(\R^d)}^2
\exp\bigg(2C_{V,s}\Big(t+\|\nabla\rnabla^{s}f_n\|_{\Ld^2([0,t];\Ld^2_A(\R^d))}^2\Big)\Big(1+\|f_n\|_{\Ld^\infty([0,t];H^{s}(\R^d))}^{2(s+1)}\Big)\bigg).
\end{multline*}
For $t\le T_n$, in view of~\eqref{eq:ass-f0-reg} and~\eqref{eq:def-Tn}, this entails
\begin{equation}\label{eq:est-Hs}
\|f_{n+1}^t\|_{H^s(\R^d)}^2+\tfrac1{C_{V,s}}\|\nabla\rnabla^sf_{n+1}\|_{\Ld^2([0,t];\Ld^2_A(\R^d))}^2
\,\le\,
\tfrac18C_0^{-2}
\exp\Big(4C_{V,s}\big(t+C_0^{-1}\big)\Big).
\end{equation}
As this bound holds uniformly with respect to regularization of coefficients and of initial data, well-posedness for~\eqref{eq:g-n-approx-rewr} in $\Ld^\infty([0,T_n];H^s(\R^d))$ without regularization follows by an approximation argument.

\medskip\noindent
Next, we check that the solution satisfies $F_{f_{n+1}}=\mu+\sqrt\mu f_{n+1}\ge0$.
As equation~\eqref{eq:g-n-approx} or~\eqref{eq:g-n-approx-rewr} for $f_{n+1}$ is equivalent to the following,
\[\partial_tF_{f_{n+1}}\,=\,\nabla\cdot\Big(\Bc_n[\tfrac1{\sqrt\mu}F_{f_n}]\nabla F_{f_{n+1}}-F_{f_{n+1}}\,\Bc_n[\tfrac1{\sqrt\mu}\nabla F_{f_n}]\Big).\]
this positivity statement follows from the maximum principle.

\medskip\noindent
Finally, choosing $C_0\ge1+2C_{V,s}$, the above $H^s$ estimate~\eqref{eq:est-Hs} for $f_{n+1}$ given $f_n$ entails for all $t\le T_n\wedge \frac1{2C_{V,s}}$,
\begin{equation*}
\|f_{n+1}^t\|_{H^s(\R^d)}^2+\tfrac1{C_0}\|\nabla\rnabla^sf_{n+1}\|_{\Ld^2([0,t];\Ld^2_A(\R^d))}^2
\,\le\,C_0^{-2},
\end{equation*}
which means $T_{n+1}\ge T_n\wedge(2C_{V,s})^{-1}$.
Therefore, setting $T_0:=1\wedge(2C_{V,s})^{-1}$,
we obtain by iteration a sequence $(f_n)_n$ satisfying the sequence of approximate equations~\eqref{eq:g-n-approx} on the time interval $[0,T_0]$ with
\[\|f_n\|_{\Ld^\infty([0,T_0];H^s(\R^d))}\,\le\,C_0^{-1}.\]
The conclusion now follows easily by a compactness argument, passing to the limit $n\uparrow\infty$ in approximate equations~\eqref{eq:g-n-approx}.
\end{proof}

We now turn to the proof of Theorem~\ref{th:global}, that is, we extend the above into a global-in-time result.
Taking inspiration from Guo's argument in~\cite{Guo-02} for the Landau equation, the proof relies heavily on detailed properties of the linearized operator $L$ and on refined estimates on the nonlinear operator $N$.

\begin{proof}[Proof of Theorem~\ref{th:global}]
We split the proof into five main steps.

\medskip
\step{1} Energy dissipation norm.\\
We define the following weighted norm, which is adapted to the dissipation structure of the linear operator $L$,
\begin{equation}\label{eq:def-3norm}
\3g\3^2\,:=\,\|\nabla g\|_{\Ld^2_A(\R^d)}^2+\|vg\|_{\Ld^2_A(\R^d)}^2\,=\,\int_{\R^d}\nabla g\cdot A\nabla g+vg\cdot Avg,
\end{equation}
where we recall that $A$ is the elliptic coefficient field defined in~\eqref{eq:def-A}, and we show that it satisfies
\begin{multline}\label{eq:dissipation} 
\| \rv^{-\frac12} g\|_{\Ld^2(\R^d)}+\| \rv^{-\frac32}\nabla g\|_{\Ld^2(\R^d)}\\
\,\lesssim\,\| \rv^{-\frac12} g\|_{\Ld^2(\R^d)}+\| \rv^{-\frac12} P_v^\bot \nabla g\|_{\Ld^2(\R^d)}+\| \rv^{-\frac32}  P_v \nabla g\|_{\Ld^2(\R^d)}\,\simeq_V\,\3g\3.
\end{multline}
where we recall the definition~\eqref{eq:def-projv} of orthogonal projections $P_v,P_v^\bot$.

\medskip\noindent
By definition of $\3\cdot\3$, Lemma~\ref{lem:A}(i) yields
\begin{eqnarray*}
\| \rv^{-\frac12} P_v^\bot \nabla g\|_{\Ld^2(\R^d)}^2+\| \rv^{-\frac32}  P_v \nabla g\|_{\Ld^2(\R^d)}^2
&=&\int_{\R^d}\rv^{-1}|P_v^\bot\nabla g|^2+\rv^{-3}|P_v\nabla g|^2\\
&\simeq_V&\int_{\R^d}\nabla g\cdot A\nabla g,
\end{eqnarray*}
so it remains to prove
\begin{equation}\label{eq:remain-Aequiv3norm}
\int_{\R^d}vg\cdot Avg\,\lesssim_V\,\|\rv^{-\frac12}g\|_{\Ld^2(\R^d)}^2\,\lesssim_V\,\int_{\R^d}vg\cdot Avg+\|\rv^{-\frac32}\nabla g\|_{\Ld^2(\R^d)}^2.
\end{equation}
The first inequality follows from Lemma~\ref{lem:A}(i) in form of $v\cdot Av\lesssim_V\rv^{-1}$, and we now turn to the second inequality.
For that purpose, we choose a cut-off function $\chi\in C^\infty_c(2B)$ with $\chi|_B=1$ and $|\nabla\chi|\le2$, and we decompose
\[\|\langle v\rangle^{-\frac12}g\|_{\Ld^2(\R^d)}^2\,\le\,\int_{2B}|\chi g|^2+\int_{\R^d\setminus B}\langle v\rangle^{-1}|g|^2.\]
By Poincaré's inequality on $2B$, using the properties of $\chi$, we deduce
\begin{eqnarray*}
\|\langle v\rangle^{-\frac12}g\|_{\Ld^2(\R^d)}^2&\lesssim&\int_{2B}|\nabla(\chi g)|^2+\int_{\R^d\setminus B}\langle v\rangle^{-1}|g|^2\\
&\lesssim&\int_{2B}|\nabla g|^2+\int_{\R^d\setminus B}\langle v\rangle^{-1}|g|^2.
\end{eqnarray*}
Now using Lemma~\ref{lem:A}(i) in form of $v\cdot A(v)v\gtrsim_V|v|^2\langle v\rangle^{-3}\simeq\langle v\rangle^{-1}$ on $\R^d\setminus B$, the second inequality in~\eqref{eq:remain-Aequiv3norm} follows.

\medskip
\step2 Control on dissipation rate: prove that
\begin{equation}\label{eq:L'-lower}
-\int_{\R^d}gL[g]\,\gtrsim_V\,\3(\Id-\pi_0)[g]\3^2,
\end{equation}
where $\pi_0$ stands for the orthogonal projection of $\Ld^2(\R^d)$ onto its finite-dimensional subspace
\begin{equation}\label{eq:def-S0}
S_0\,:=\,\big\{\sqrt\mu\big(a+b\cdot v+c|v|^2\big)~:~a,c\in\R,~b\in\R^d\big\},
\end{equation}
or equivalently, $\pi_0[g]:=\sum_i(\int_{\R^d}w_ig)w_i$ in terms of an orthonormal basis $\{w_i\}_{1\le i\le d+2}$ of the subspace~$S_0$.

\medskip\noindent
Since the definition of $B$ yields $B(v,v-v_*;\nabla F)=B(v_*,v_*-v;\nabla F)$, cf.~\eqref{eq:LB-kernel}, we have by definition of $L$, cf.~\eqref{eq:g-reform-re},
\begin{eqnarray}
-\int_{\R^d}gL[g]
&=&\tfrac12\iint_{\R^d\times\R^d}\Big(\sqrt\mu_*(\nabla+v)g-\sqrt\mu((\nabla+v)g)_*\Big)\label{eq:positivity-L'}\\
&&\hspace{1.8cm}\cdot\, B(v,v-v_*;\nabla\mu)\Big(\sqrt\mu_*(\nabla+v)g-\sqrt\mu((\nabla+v)g)_*\Big)dvdv_*\nonumber\\
&\ge&0.\nonumber
\end{eqnarray}
As $B(v,v-v_*;\nabla\mu)(v-v_*)=0$, this expression clearly vanishes for $g\in S_0$ (and in fact only for $g\in S_0$), and it remains to prove a quantitative version of this fact in form of the claimed lower bound~\eqref{eq:L'-lower}. For that purpose, we argue by contradiction: if~\eqref{eq:L'-lower} was failing,
there would exist a sequence $(g_n)_n$ such that
\begin{equation}\label{eq:gn-contrad}
0\le-\int_{\R^d}g_nL[g_n]\le\tfrac1n,\qquad\3g_n\3=1,\qquad\pi_0[g_n]=0.
\end{equation}
We split the argument into two further substeps.

\medskip
\substep{2.1} Prove that up to an extraction the sequence $(g_n)_n$ in~\eqref{eq:gn-contrad} converges weakly to some~$g$ in $H^1_\loc(\R^d)$ with
\begin{equation}\label{eq:prop-glim}
-\int_{\R^d}gL[g]=0,\qquad\3g\3=1,\qquad\pi_0[g]=0.
\end{equation}
In view of~\eqref{eq:dissipation},
the property $\3g_n\3=1$ in~\eqref{eq:gn-contrad} ensures that
for all~$M>0$ the sequence of restrictions $(g_n|_{B_M})_n$ is bounded in~$H^1(B_M)$. By weak compactness, there exists~\mbox{$g\in H^1_\loc(\R^d)$} such that up to an extraction~$(g_n)_n$ converges weakly to~$g$ in~$H^1_\loc(\R^d)$.
Passing to the limit in~\eqref{eq:gn-contrad}, we infer
\begin{equation}\label{eq:g-contrad-0}
\3g\3\le1,\qquad\pi_0[g]=0.
\end{equation}
It remains to pass to the limit in the relation $0\le-\int_{\R^d}g_nL[g_n]\le\frac1n$, which requires some more care.
By definition of $L$, cf.~\eqref{eq:defL-reform}, we can decompose
\[-\int_{\R^d}g_nL[g_n]
\,=\,\3g_n\3^2+2\int_{\R^d}vg_n\cdot A\nabla g_n
-\int_{\R^d}\sqrt\mu(\nabla+v)g_n\cdot \Bc_\circ[(\nabla+v)g_n].\]
Using that $\3g_n\3=1$, and noting that $2\int_{\R^d}vg_n\cdot A\nabla g_n=-\int_{\R^d}g_n^2\nabla\cdot(Av)$ follows from integration by parts, we deduce
\begin{equation}\label{eq:gn/g-lim}
-\int_{\R^d}g_nL[g_n]
\,=\,1-\int_{\R^d} g_n^2\,\nabla\cdot(Av)
-\int_{\R^d}\sqrt\mu(\nabla+v)g_n\cdot \Bc_\circ[(\nabla+v)g_n].
\end{equation}
Along the extraction, as $g_n$ converges weakly to $g$ in $H^1_\loc(\R^d)$, hence strongly in $\Ld^2_\loc(\R^d)$ by Rellich's theorem, we deduce for all~\mbox{$M\ge1$},
\begin{eqnarray}
\int_{|v|\le M}g_n^2\,\nabla\cdot(Av)&\xrightarrow{n\uparrow\infty}&\int_{|v|\le M}g^2\,\nabla\cdot(Av),\label{eq:conv-truncM}\\
\int_{|v|\le M}\sqrt\mu(\nabla+v)g_n\cdot \Bc_\circ^M[(\nabla+v)g_n]&\xrightarrow{n\uparrow\infty}&\int_{|v|\le M}\sqrt\mu(\nabla+v)g\cdot \Bc_\circ^M[(\nabla+v)g],\nonumber
\end{eqnarray}
in terms of the truncated operator
\[\Bc_\circ^M[h]\,:=\,\int_{|v_*|\le M}B(v,v-v_*;\nabla\mu)\sqrt\mu_*h_*\,dv_*.\]
Next, we estimate truncation errors: we show for all $M\ge1$,
\begin{eqnarray}
\sup_{n}\Big|\int_{|v|> M}g_n^2\,\nabla\cdot(Av)\Big|&\lesssim&\tfrac1M, \label{Remainder1}\\
\sup_{n}\Big|\int_{|v|> M}\sqrt\mu(\nabla+v)g_n\cdot \Bc_\circ[(\nabla+v)g_n]\Big|&\lesssim&\tfrac1M, \label{Remainder2} \\
\sup_{n}\Big|\int_{|v|\le M}\sqrt\mu(\nabla+v)g_n\cdot (\Bc_\circ-\Bc_\circ^M)[(\nabla+v)g_n]\Big|&\lesssim&\tfrac1M. \label{Remainder3}
\end{eqnarray}
The estimate~\eqref{Remainder1} follows from Lemma~\ref{lem:A} in form of
\begin{eqnarray*}
\Big|\int_{|v|> M}g_n^2\,\nabla\cdot(Av)\Big|
&\lesssim_V&\int_{|v|> M}\langle v\rangle^{-2}g_n^2\\
&\lesssim_V&\int_{|v|> M}\langle v\rangle^{-1}(vg_n\cdot Avg_n)\\
&\lesssim&\tfrac1M\3g_n\3^2\,=\,\tfrac1M.
\end{eqnarray*}
Next, appealing to Lemma~\ref{lem:B0}(i) and to~\eqref{eq:dissipation},
\begin{multline*}
\Big|\int_{|v|> M}\sqrt\mu(\nabla+v)g_n\cdot \Bc_\circ[(\nabla+v)g_n]\Big|\\
\,\lesssim\,\tfrac1M\|\rv^{-\frac32}(\nabla+v)g_n\|_{\Ld^2(\R^d)}\|\rv^{\frac52}\sqrt\mu\Bc_\circ[(\nabla+v)g_n]\|_{\Ld^2(\R^d)}\\
\,\lesssim_V\,\tfrac1M\|\rv^{-\frac32}(\nabla+v)g_n\|_{\Ld^2(\R^d)}^2
\,\lesssim_V\,\tfrac1M\3g_n\3^2\,=\,\tfrac1M,
\end{multline*}
that is,~\eqref{Remainder2}.
Finally, rewriting
\begin{align}
(\Bc_\circ-\Bc_\circ^M)[(\nabla+v)g_n] =  \Bc_\circ[\mathds1_{|v|>M}(\nabla+v)g_n],
\end{align}
we similarly find
\begin{multline*}
\Big|\int_{|v|\le M}\sqrt\mu(\nabla+v)g_n\cdot (\Bc_\circ-\Bc_\circ^M)[(\nabla+v)g_n]\Big|\\
\,\lesssim\,\|\rv^{-\frac32}(\nabla+v)g_n\|_{\Ld^2(\R^d)}\|\rv^{\frac32}\sqrt\mu \Bc_\circ[\mathds1_{|v|>M}(\nabla+v)g_n]\|_{\Ld^2(\R^d)}\\
\,\lesssim_V\,\tfrac1M\|\rv^{-\frac32}(\nabla+v)g_n\|_{\Ld^2(\R^d)}^2
\,\lesssim_V\,\tfrac1M\3g_n\3^2\,=\,\tfrac1M,
\end{multline*}
that is,~\eqref{Remainder3}.

\medskip\noindent
Combined with~\eqref{eq:gn/g-lim} and~\eqref{eq:conv-truncM}, the above estimates~\eqref{Remainder1}--\eqref{Remainder3} allow to pass to the limit in~\eqref{eq:gn/g-lim}, and thus, recalling that~\eqref{eq:gn-contrad} yields $-\int_{\R^d}g_nL[g_n]\to0$, we find
\begin{eqnarray*}
0
&=&1-\int_{\R^d} g^2\,\nabla\cdot(Av)-\int_{\R^d}\sqrt\mu(\nabla+v)g\cdot \Bc_\circ[(\nabla+v)g]\\
&=&1-\3g\3^2-\int_{\R^d}gL[g].
\end{eqnarray*}
Combining this with the positivity~\eqref{eq:positivity-L'} and with~\eqref{eq:g-contrad-0}, we conclude
\[-\int_{\R^d}gL[g]=0,\qquad\3g\3=1,\]
hence~\eqref{eq:prop-glim}.

\medskip
\substep{2.2} Conclusion.\\
It remains to show that for a function $h\in H^1_\loc(\R^d)$ with $\3h\3 <\infty$, the following equivalence holds: 
\begin{equation}\label{eq:S0equiv}
h\in S_0  ~~\Longleftrightarrow~~ -\int_{\R^d}hL[h]=0.
\end{equation}
Indeed, using this equivalence, we would deduce from~\eqref{eq:prop-glim} that $g\in S_0$, so that the two other conditions in~\eqref{eq:prop-glim} would become contradictory, thus yielding the conclusion.

\medskip\noindent
It remains to prove the claimed equivalence~\eqref{eq:S0equiv}. The implication `$\Rightarrow$' is clear and we now focus on the reverse. For that purpose, we note that, for all $v,v_*,e\in\R^d$ with $v\ne v_*$, the kernel $B(v,v-v_*;\nabla\mu)$ satisfies by definition,
\[e\cdot B(v,v-v_*;\nabla\mu)e\,\ge\,0,\]
and, using Lemma~\ref{lem:eps-bndd-R} and the explicit computation~\eqref{eq:expl-Landau-pr},
\begin{eqnarray*}
e\cdot  B(v,v-v_*;\nabla\mu)e=0 &\Longleftrightarrow&\int_{\R^d}|e\cdot k|^2\pi\widehat V(k)^2\tfrac{\delta(k\cdot(v-v_*))}{|\e(k,k\cdot v;\nabla \mu)|^2}\dbar k=0\\
&\Longleftrightarrow&e\cdot\Big(\Id-\tfrac{(v-v_*)\otimes(v-v_*)}{|v-v_*|^2}\Big)e=0\\
&\Longleftrightarrow&e \parallel v-v_*.
\end{eqnarray*}
In view of~\eqref{eq:positivity-L'}, we deduce that the condition $-\int_{\R^d}hL[h]=0$ implies almost everywhere
\[\sqrt\mu_*(\nabla+v)h=\sqrt\mu((\nabla+v)h)_*,\]
which implies $h\in S_0$ by integration.

\medskip
\step3 Control on dissipation rate for differentiated equation: prove that for all $\alpha>0$,
\begin{equation}\label{eq:lb-nabsL}
-\int_{\R^d}(\nabla^\alpha g)\nabla^\alpha L[g]\,\ge\,\tfrac1{C_V}\3\nabla^\alpha g\3^2-C_{V,\alpha}\3g\3_{|\alpha|-1}^2,
\end{equation}
where we have set for abbreviation,
\begin{equation}\label{eq:def-3norm-diff}
\3g\3_s^2\,:=\,\sum_{|\gamma|\le s}\3\nabla^\gamma g\3^2.
\end{equation}
We split the proof into three further substeps.

\medskip
\substep{3.1} Prove that
 \begin{align}\label{eq:bnd-pi0-nab}
\3 \pi_0 [\nabla g] \3 \,\lesssim\, \3g\3. 
 \end{align}
Recall that $S_0$ is a finite-dimensional space, cf.~\eqref{eq:def-S0}, and that the orthogonal projection~$\pi_0$ can be written as $\pi_0[g]:=\sum_i(\int_{\R^d}w_ig)w_i$ for some orthonormal basis $\{w_i\}_{1\le i\le d+2}$ of~$S_0$.  It is therefore sufficient to show that for all $w\in S_0$ we have
\begin{align}\label{eq:bnd-pi0-nab-pre}
\Big| \int_{\R^d}w\nabla g  \Big| \,\lesssim_V\,\|w\|_{\Ld^2(\R^d)} \3g\3,
\end{align}
which happens to be a direct consequence of~\eqref{eq:dissipation}
as all norms are equivalent on $S_0$.

\medskip
\substep{3.2} Prove that for all $\alpha>0$,
\begin{equation}\label{eq:Lcommutator}
\Big|\int_{\R^d}(\nabla^\alpha g)\,[\nabla^\alpha,L]g\Big| \,\lesssim_{V,\alpha}\, \3\nabla^\alpha g\3\3g\3_{|\alpha|-1}.
\end{equation}
By definition of $L$, cf.~\eqref{eq:defL-reform}, we decompose
\[\int_{\R^d}(\nabla^\alpha g)\,[\nabla^\alpha,L]g\,=\,T_1^\alpha(g)+T_2^\alpha(g),\]
in terms of
\begin{eqnarray*}
T_1^\alpha(g)&:=&\int_{\R^d}(\nabla^\alpha g)\,\big[\nabla^\alpha,(\nabla-v)\cdot (A(\nabla+v))\big]g,\\
T_2^\alpha(g)&:=&-\int_{\R^d}(\nabla^\alpha g)\,\big[\nabla^\alpha,(\nabla-v)\cdot(\sqrt\mu\Bc_\circ(\nabla+v))\big]g.
\end{eqnarray*}
We start with estimating the first term $T_1^\alpha(g)$.
By Leibniz' rule, we compute
\begin{multline*}
T_1^\alpha(g)\,=\,-\sum_{\gamma\le\alpha\atop\gamma\ne0}\binom\alpha\gamma\bigg(\int_{\R^d}((\nabla+v)\nabla^\alpha g)\cdot (\nabla^\gamma A)((\nabla+v)\nabla^{\alpha-\gamma}g)\\
+\sum_{e_j\le\gamma}\int_{\R^d}(e_j\nabla^\alpha g)\cdot (\nabla^{\gamma-e_j}A)((\nabla+v)\nabla^{\alpha-\gamma}g)
+\sum_{e_j\le\gamma}\int_{\R^d}((\nabla+v)\nabla^\alpha g)\cdot (\nabla^{\gamma-e_j}A)(e_j\nabla^{\alpha-\gamma}g)\\
+\sum_{e_j+e_l\le\gamma}\int_{\R^d}(e_{j}\nabla^\alpha g)\cdot (\nabla^{\gamma-e_j-e_l}A)e_{l}\nabla^{\alpha-\gamma}g
\bigg),
\end{multline*}
hence, by~\eqref{eq:hnabAh} and~\eqref{eq:dissipation},
\[|T_1^\alpha(g)|\,\lesssim_{V,\alpha}\,\sum_{\gamma<\alpha}\3\nabla^\alpha g\3\3\nabla^{\gamma}g\3\]
We turn to the second term $T_2^\alpha(g)$. By Leibniz' rule, we can write
\begin{multline*}
T_2^\alpha(g)\,=\,\int_{\R^d}((\nabla+v)\nabla^\alpha g)\,\cdot\nabla^\alpha\big(\sqrt\mu\Bc_\circ[(\nabla+v)g]\big)\\
+\sum_{e_j\le\alpha}\int_{\R^d}(e_j\nabla^\alpha g)\,\cdot\nabla^{\alpha-e_j}\big(\sqrt\mu\Bc_\circ[(\nabla+v)g]\big)\\
-\int_{\R^d}((\nabla+v)\nabla^\alpha g)\,\cdot \sqrt\mu\Bc_\circ[(\nabla+v)\nabla^\alpha g],
\end{multline*}
or alternatively, further integrating by parts in the last right-hand side term, for any $j_0\in\alpha$,
\begin{multline*}
T_2^\alpha(g)\,=\,\int_{\R^d}((\nabla+v)\nabla^\alpha g)\,\cdot\nabla^\alpha\big(\sqrt\mu\Bc_\circ[(\nabla+v)g]\big)\\
+\sum_{e_j\le\alpha}\int_{\R^d}(e_j\nabla^\alpha g)\,\cdot\nabla^{\alpha-e_j}\big(\sqrt\mu\Bc_\circ[(\nabla+v)g]\big)\\
+\int_{\R^d}((\nabla+v)\nabla^{\alpha-j_0} g)\,\cdot \nabla_{j_0}\big(\sqrt\mu\Bc_\circ[(\nabla+v)\nabla^\alpha g]\big)
+\int_{\R^d}(e_{j_0}\nabla^{\alpha-j_0} g)\,\cdot \sqrt\mu\Bc_\circ[(\nabla+v)\nabla^\alpha g].
\end{multline*}
Now appealing to Lemma~\ref{lem:B0} and to~\eqref{eq:dissipation}, we deduce
\begin{equation*}
|T_2^\alpha(g)|\,\lesssim_{V,\alpha}\,
\sum_{\gamma<\alpha}\3\nabla^\alpha g\3\3\nabla^\gamma g\3,
\end{equation*}
and the claim~\eqref{eq:Lcommutator} follows.

\medskip
\substep{3.3} Proof of~\eqref{eq:lb-nabsL}.\\
Let $\alpha>0$. Decomposing
\begin{equation*}
-\int_{\R^d}(\nabla^\alpha g)\nabla^\alpha L[g]\,=\,-\int_{\R^d}(\nabla^\alpha g)L[\nabla^\alpha g]+\int_{\R^d}(\nabla^\alpha g)[\nabla^\alpha, L]g,
\end{equation*}
and applying~\eqref{eq:L'-lower} and~\eqref{eq:Lcommutator}, we find
\begin{eqnarray*}
-\int_{\R^d}(\nabla^\alpha g)\nabla^\alpha L[g]&\ge&\tfrac1{C_V}\3(\Id-\pi_0)[\nabla^\alpha g]\3^2-C_{V,\alpha}\3\nabla^\alpha g\3\3 g\3_{|\alpha|-1}\\
&\ge&\tfrac1{2C_V}\3\nabla^\alpha g\3^2-C_V\3\pi_0[\nabla^\alpha g]\3^2-C_{V,\alpha}\3g\3^2_{|\alpha|-1}.
\end{eqnarray*}
Combined with~\eqref{eq:bnd-pi0-nab}, this proves~\eqref{eq:lb-nabsL}.

\medskip
\step4 Control on the nonlinearity: provided that $g$ satisfies the following smallness condition, for some $r_0\ge0$, $\delta_0>0$, and some large enough constant $C_0$,
\begin{equation}\label{eq:smallness-cond-RE}
\|\rv^{-r_0}\rnabla^{\frac32+\delta_0}g\|_{\Ld^2(\R^d)}\le\tfrac1{C_0}
\end{equation}
we prove for all $\alpha\ge0$,
\begin{equation}\label{eq:bnd-N}
\Big|\int_{\R^d}(\nabla^\alpha g)(\nabla^\alpha N(g))\Big|\,\lesssim_{V,\alpha}\,\3g\3_{|\alpha|}^2\|g\|_{H^{|\alpha|\vee2}}\Big(1+\|g\|_{H^{|\alpha|}}^{|\alpha|}\Big),
\end{equation}
where we recall the notation~\eqref{eq:def-3norm-diff}.
The subtle point in this estimate is to control the nonlinearity in terms of the energy dissipation norm. Note that $|\alpha|\vee2$ could be replaced by $|\alpha|\vee(\frac32+\delta_0)$ for any $\delta_0>0$, but the two are equivalent for our purposes as we focus on integer differentiability.

\medskip\noindent
By definition of $N$, cf.~\eqref{eq:g-reform-re}, we decompose
\begin{equation*}
N(g)\,=\,N_1(g)+N_2(g)+N_3(g)+N_4(g)
\end{equation*}
in terms of
\begin{eqnarray*}
N_1(g)&:=&(\nabla- v)\cdot\Bc(\nabla F_g)[g]\,\nabla g,\\
N_2(g)&:=&-(\nabla- v)\cdot\big(g\,\Bc(\nabla F_g)[\nabla g]\big),\\
N_3(g)&:=&(\nabla- v)\cdot\Big(\Bc(\nabla F_g)[\sqrt\mu]-\Bc(\nabla\mu)[\sqrt\mu]\Big)(\nabla+v)g,\\
N_4(g)&:=&-(\nabla- v)\cdot\bigg(\sqrt{\mu}\Big(\Bc(\nabla F_g)[(\nabla+v)g]-\Bc(\nabla\mu)[(\nabla+v)g]\Big)\bigg).
\end{eqnarray*}
We focus on $N_1$, while the estimation of $N_2$, $N_3$, and $N_4$ is similar and is skipped for shortness.
By Leibniz' rule and an integration by parts, we find
\begin{multline*}
\int_{\R^d}(\nabla^\alpha g)(\nabla^\alpha N_1(g))
\,=\,-\sum_{\gamma\le\alpha}\binom\alpha\gamma \int_{\R^d}((\nabla+v)\nabla^\alpha g)\cdot(\nabla^\gamma\Bc(\nabla F_g)[g])\,\nabla\nabla^{\alpha-\gamma} g\\
-\sum_{\gamma+e_j\le\alpha}\binom\alpha{\gamma,e_j}\int_{\R^d}(e_j\nabla^\alpha g)\cdot (\nabla^{\gamma}\Bc(\nabla F_g)[g])\,\nabla\nabla^{\alpha-\gamma-e_j} g.
\end{multline*}
By Lemma~\ref{lem:B}, appealing to~\eqref{eq:lemB-1} or~\eqref{eq:lemB-2} depending on the value of $\gamma$, and recalling~\eqref{eq:dissipation}, we obtain~\eqref{eq:bnd-N} for $N_1$.

\medskip 
\step5 Conclusion.\\
Given $s\ge2$, let $f\in\Ld^\infty([0,T];H^{s}(\R^d))$ be a local strong solution of the Lenard--Balescu equation~\eqref{eq:g-reform-re} on $[0,T]$ as given by Proposition~\ref{prop:loc-wp-equ}. Up to smoothing the initial data and shortening a bit the time interval $[0,T]$, the solution can be assume to have more smoothness, and we may then write on $[0,T]$ for all $0\le r\le s$,
\[\tfrac12\partial_t\sum_{|\alpha|\le r}\|\nabla^\alpha f\|_{\Ld^2(\R^d)}^2-\sum_{|\alpha|\le r}\int_{\R^d}(\nabla^\alpha f)\nabla^\alpha L[f]\,=\,\sum_{|\alpha|\le r}\int_{\R^d}(\nabla^\alpha f)\nabla^\alpha N(f).\]
hence, inserting~\eqref{eq:lb-nabsL} and~\eqref{eq:bnd-N}, and using that for $\alpha=0$ we have $-\int_{\R^d}fL[f]\ge0$, we deduce
\begin{multline}\label{eq:appl-step23}
\tfrac12\partial_t\sum_{|\alpha|\le r}\|\nabla^\alpha f\|_{\Ld^2(\R^d)}^2+\tfrac1{C_V}\sum_{0<|\alpha|\le r}\3\nabla^\alpha f\3^2\,\le\,C_{V,s}\sum_{|\alpha|\le r-1}\3\nabla^\alpha f\3^2\\
+C_{V,s}\Big(1+\|f\|_{H^{s}(\R^d)}^{s}\Big)\|f\|_{H^{s}(\R^d)}\sum_{|\alpha|\le r}\3\nabla^\alpha f\3^2.
\end{multline}
By Proposition~\ref{prop:loc-wp-equ}, assuming $\|f^\circ\|_{H^s(\R^d)}\le\frac12\wedge(8C_VC_{V,s})^{-1}$, and choosing $T$ small enough, we get
\begin{equation}\label{eq:smallness-Hs}
\|f\|_{H^s(\R^d)}\le1\wedge(4C_VC_{V,s})^{-1}\qquad\text{on $[0,T]$,}
\end{equation}
which allows to absorb the last right-hand side term of~\eqref{eq:appl-step23} into the dissipation term. This yields on $[0,T]$ for all $0\le r\le s$,
\begin{equation*}
\tfrac12\partial_t\sum_{|\alpha|\le r}\|\nabla^\alpha f\|_{\Ld^2(\R^d)}^2+\tfrac1{2C_V}\sum_{0<|\alpha|\le r}\3\nabla^\alpha f\3^2\,\le\,C_{V,s}\sum_{|\alpha|\le r-1}\3\nabla^\alpha f\3^2+\tfrac1{2C_V}\3f\3^2,
\end{equation*}
and thus, writing a telescoping sum over $1\le r\le s$ and using the dissipation,
\begin{equation}\label{eq:telescop-Hs}
\tfrac12\partial_t\sum_{r=1}^s(2+2C_VC_{V,s})^{-r}\sum_{|\alpha|\le r}\|\nabla^\alpha f\|_{\Ld^2(\R^d)}^2
\,\le\,\tfrac1{C_V}\3f\3^2.
\end{equation}
Now for $r=0$, rather appealing to~\eqref{eq:L'-lower}, the inequality~\eqref{eq:appl-step23} becomes
\begin{equation*}
\tfrac12\partial_t\|f\|_{\Ld^2(\R^d)}^2+\tfrac1{C_V}\3(\Id-\pi_0)[f]\3^2\,\le\,C_{V,s}\Big(1+\|f\|_{H^{s}(\R^d)}^{s}\Big)\|f\|_{H^{s}(\R^d)}\3f\3^2,
\end{equation*}
hence, by~\eqref{eq:smallness-Hs},
\begin{equation}\label{eq:L2bndLB}
\tfrac12\partial_t\|f\|_{\Ld^2(\R^d)}^2+\tfrac1{C_V}\3(\Id-\pi_0)[f]\3^2\,\le\,\tfrac1{2C_V}\3f\3^2.
\end{equation}
In view of~\eqref{eq:conserv}, as by assumption $\pi_0[f^\circ]=0$, the Lenard--Balescu equation~\eqref{eq:g-reform-re} ensures that the solution $f$ satisfies
\[\pi_0[f]=0\qquad\text{on $[0,T]$},\]
so that~\eqref{eq:L2bndLB} becomes
\begin{equation}\label{eq:L2-dissip}
\tfrac12\partial_t\|f\|_{\Ld^2(\R^d)}^2+\tfrac1{2C_V}\3f\3^2\,\le\,0.
\end{equation}
Combining this with~\eqref{eq:telescop-Hs}, we conclude
\begin{equation*}
\tfrac12\partial_t\sum_{r=0}^s(4+4C_VC_{V,s})^{-r}\sum_{|\alpha|\le r}\|\nabla^\alpha f\|_{\Ld^2(\R^d)}^2
\,\le\,0,
\end{equation*}
which can now be used to propagate the small local solution globally.
\end{proof}

\section{Convergence to equilibrium}
This section is devoted to the proof of Theorem~\ref{cor:conv}.
We start with the entropic convergence and the proof of~\eqref{eq:conventropy}, which quickly follows from techniques developed by Toscani and Villani for the Landau equation in~\cite{Toscani-Villani-99,Toscani-Villani-00}. In view of Lemma~\ref{lem:eps-bndd-R}(ii), the constructed solution $F$ satisfies \mbox{$|\e(k,k\cdot v;\nabla F)|\gtrsim_V1$} globally in time. Combining this with~\eqref{eq:expl-Landau-pr} in form of
\[B(v,v-v_*;\nabla F)\,\gtrsim_V\,\tfrac1{|v-v_*|}\Big(\Id-\tfrac{(v-v_*)\otimes(v-v_*)}{|v-v_*|^2}\Big),\]
the $H$-theorem~\eqref{eq:H-princ} takes the form
\begin{equation}\label{eq:lowerbnd-D}
-\partial_tH(F|\mu)
~\gtrsim_V~D(F),
\end{equation}
in terms of the entropy dissipation functional
\[D(F)\,:=\,\tfrac12\iint _{\R^d\times\R^d}\tfrac{FF_*}{|v-v_*|}\Big|\Big(\Id-\tfrac{(v-v_*)\otimes(v-v_*)}{|v-v_*|^2}\Big)\big(\tfrac{\nabla F}F-\tfrac{\nabla_* F_*}{F_*}\big)\Big|^2.\]
Next, given $\ell\ge1$, we appeal to the following logarithmic Sobolev-type inequality, which is obtained in~\cite[Proposition~4]{Toscani-Villani-00},
\begin{equation*}
D(F) \,\gtrsim\, C(F)K_\ell(F)^{-\frac3\ell}H(F|\mu)^{1+\frac3\ell},
\end{equation*}
where $C(F)$ only depends on $F$ via an upper bound on $H(F|\mu)$, and where $K_\ell(F)$ is given by
\begin{equation*}
K_\ell (F)\,:=\, \int_{\R^d}  \langle v\rangle^{\ell+2}\big(|\nabla \sqrt{F}|^2 + F\big).
\end{equation*}
Inserting this into~\eqref{eq:lowerbnd-D}, and noting that the $H$-theorem and the choice~\eqref{eq:smallness-st} entail
\[H(F|\mu)\,\le\,H(F^\circ|\mu)\,\le\,\int_{\R^d}|f^\circ|^2,\]
we deduce
\begin{equation}\label{eq:lowerbnd-Dre}
-\partial_tH(F|\mu)~\gtrsim_{V,f^\circ}~K_\ell(F)^{-\frac3\ell}H(F|\mu)^{1+\frac3\ell}.
\end{equation}
It remains to estimate $K_\ell(F)$. For that purpose, we appeal to the following functional inequalities from~\cite[p.1297]{Toscani-Villani-00},
\begin{eqnarray*}
K_\ell(F)&\le&\int_{\R^d}\Big|\nabla\sqrt{\langle v\rangle^{\ell+2} F}\Big|^2+C\ell^2\int_{\R^d}\langle v\rangle^{\ell+2}F,\\
\int_{\R^d}|\nabla\sqrt F|^2&\lesssim&\Big(\int_{\R^d}\langle v\rangle^{d+1}|\langle\nabla\rangle^2F|^2\Big)^\frac12,
\end{eqnarray*}
which we combine in form of
\begin{equation*}
K_\ell(F)\,\lesssim\,
\ell^2\Big(\int_{\R^d}\langle v\rangle^{2\ell+d+5}|\langle\nabla\rangle^2F|^2\Big)^\frac12
+\ell^2\int_{\R^d}\langle v\rangle^{\ell+2}F.
\end{equation*}
For the constructed solution $F=\mu+\sqrt\mu f$, noting that $\int_{\R^d}\langle v\rangle^n\mu\lesssim(Cn)^{\frac n2}$, we get
\begin{equation*}
K_\ell(F)\,\lesssim\,(C\ell)^{\frac\ell2}\big(1+\|f\|_{H^2(\R^d)}\big)\,\lesssim\,(C\ell)^{\frac\ell2}.
\end{equation*}
Inserting this into~\eqref{eq:lowerbnd-Dre}, we deduce
\[-\partial_tH(F|\mu)~\gtrsim_{V}~\ell^{-\frac32}H(F|\mu)^{1+\frac3\ell},\]
and the conclusion~\eqref{eq:conventropy} easily follows after time integration and optimization in $\ell$.

\medskip
We turn to the $\Ld^2$-convergence and the proof of~(i)--(ii), for which we take inspiration from the work of Strain and Guo~\cite{Guo-Strain-08}.
By an approximation argument, note that the convergence $f^t\to0$ in $\Ld^2(\R^d)$ follows from the quantitative estimates in~\mbox{(i)--(ii)}, hence we may focus on the latter.
So as to prove both estimates at once, we consider the mixed weight function
\[w_{\ell,\theta,K}(v):=\rv^\ell\exp(K\rv^\theta).\]
Let parameters $\ell,\theta,K\ge0$ be fixed either with $\theta<2$, or with $\theta=2$ and $K\ll_V1$. We split the proof into two main steps.

\medskip
\step1 Compactness estimates: if initial data $f^\circ$ satisfies
\[\int_{\R^d}w_{\ell,\theta,K}^2|f^\circ|^2<\infty,\]
and if in addition $\|f^\circ\|_{H^2(\R^d)}\ll_{V,\ell,\theta,K}1$,
then the solution $f$ of~\eqref{eq:g-reform-re} satisfies for all $t\ge0$,
\begin{equation}\label{eq:compact-est}
\int_{\R^d}w_{\ell,\theta,K}^2|f^t|^2\,\lesssim_{V,\ell,\theta,K}\,\int_{\R^d}w_{\ell,\theta,K}^2|f^\circ|^2.
\end{equation}
We focus on the case $\theta<2$, and we emphasize that the argument below ensures that the multiplicative factor is bounded uniformly in the limit $\theta\uparrow2$ provided $K\ll_V1$. From this, the critical case $\theta=2$ with $K\ll_V1$ is deduced a posteriori by letting~$\theta\uparrow2$ in~\eqref{eq:compact-est}.
Next, up to an approximation argument, adding a small constant diffusion in equation~\eqref{eq:g-reform-re}, we note that we may assume the solution $f$ to have some Gaussian decay, which allows to justify all computations below for $\theta<2$.
From equation~\eqref{eq:g-reform-re}, we can decompose
\begin{equation*}
\tfrac12\partial_t\int_{\R^d}w_{\ell,\theta,q}^2|f|^2+I_1(f)\,=\,I_2(f)+I_3(f)+I_4(f),
\end{equation*}
in terms of
\begin{eqnarray*}
I_1(g)&:=&\int_{\R^d}(\nabla+v)(w_{\ell,\theta,q}^2g)\cdot A(\nabla+v)g,\\
I_2(g)&:=&\int_{\R^d}\sqrt\mu(\nabla+v)(w_{\ell,\theta,q}^2g)\cdot \Bc(\nabla F_g)[(\nabla+v)g],\\
I_3(g)&:=&-\int_{\R^d}(\nabla+v)(w_{\ell,\theta,q}^2g)\cdot \Big(\Bc(\nabla F_g)[g]\nabla g-g\,\Bc(\nabla F_g)[\nabla g]\Big),\\
I_4(g)&:=&\int_{\R^d}(\nabla+v)(w_{\ell,\theta,q}^2g)\cdot \Big(\Bc(\nabla F_g)[\sqrt\mu]-\Bc(\nabla \mu)[\sqrt\mu]\Big)(\nabla+v)g.
\end{eqnarray*}
We split the proof into four further substeps.

\medskip
\substep{1.1} Weighted energy dissipation norm $\3\cdot\3_{\ell,\theta,q}$.\\
We define the following weighted version of the energy dissipation norm~\eqref{eq:def-3norm},
\[\3g\3_{\ell,\theta,K}^2\,:=\,\int_{\R^d}w_{\ell,\theta,K}^2\Big(\nabla g\cdot A\nabla g+vg\cdot Avg\Big),\]
which satisfies as in~\eqref{eq:dissipation},
\begin{multline}\label{eq:dissipation-w} 
\|\rv^{-\frac12}w_{\ell,\theta,K} g\|_{\Ld^2(\R^d)}+\| \rv^{-\frac32}w_{\ell,\theta,K}\nabla g\|_{\Ld^2(\R^d)}\\
\,\lesssim\,\| \rv^{-\frac12}w_{\ell,\theta,K} g\|_{\Ld^2(\R^d)}+\| \rv^{-\frac32}w_{\ell,\theta,K}  P_v \nabla g\|_{\Ld^2(\R^d)}+\| \rv^{-\frac12}w_{\ell,\theta,K} P_v^\bot \nabla g\|_{\Ld^2(\R^d)}\\
\,\simeq_V\,\3g\3_{\ell,\theta,K}.
\end{multline}

\substep{1.2} Proof that
\begin{equation}\label{eq:bnd-I1}
I_1(g)\,\ge\,\tfrac1{2}\3g\3_{\ell,\theta,K}^2
-C_{V,\ell,\theta,K}\3g\3^2.
\end{equation}
By integration by parts, we can decompose
\begin{equation*}
I_1(g)
\,=\,\3g\3_{\ell,\theta,K}^2-\int_{\R^d}g^2\,\nabla\cdot(w_{\ell,\theta,q}^2 Av) +\int_{\R^d}g(\nabla w_{\ell,\theta,q}^2)\cdot A(\nabla+v)g.
\end{equation*}
In view of Lemma~\ref{lem:A}, as obviously $\nabla w_{\ell,\theta,K}^2=P_v\nabla w_{\ell,\theta,K}^2$, we get
\begin{equation}\label{eq:pre-bnd-I1}
I_1(g)
\,\ge\,\3g\3_{\ell,\theta,K}^2-C_V\int_{\R^d}\rv^{-2}g^2\big(w_{\ell,\theta,q}^2+|\nabla w_{\ell,\theta,q}^2|\big)-C_V\int_{\R^d}\rv^{-3}|g||\nabla g||\nabla w_{\ell,\theta,q}^2|.
\end{equation}
Noting that $|\nabla w_{\ell,\theta,K}^2|\le2(\ell+\theta K\rv^{\theta} )\rv^{-1}w_{\ell,\theta,K}^2$, and splitting the cases $|v|\le M$ and~\mbox{$|v|>M$}, we can bound for all $M>0$,
\begin{multline*}
\rv^{-1}\big(w_{\ell,\theta,K}^2+|\nabla w_{\ell,\theta,K}^2|\big)\le\big(1+2\ell+2\theta K\big)\langle M\rangle^{2\ell}e^{2K\langle M\rangle^\theta}\\
+\big(M^{-1}+2\ell M^{-2}+2\theta K M^{\theta-2} \big)w_{\ell,\theta,K}^2,
\end{multline*}
hence, for all $N>0$, choosing $M\gg_{V,\ell,\theta,K,N}1$ large enough (and also \mbox{$K\ll_{V,N}1$} small enough in case $\theta=2$),
\[\rv^{-1}\big(w_{\ell,\theta,K}^2+|\nabla w_{\ell,\theta,K}^2|\big)\le C_{V,\ell,\theta,K,N}+\tfrac1{N}w_{\ell,\theta,K}^2.\]
Inserting this into~\eqref{eq:pre-bnd-I1}, we get for all $N>0$,
\begin{multline*}
I_1(g)
\,\ge\,\3g\3_{\ell,\theta,K}^2-C_{V,\ell,\theta,K,N}\Big(\int_{\R^d}\rv^{-1}g^2+\int_{\R^d}\rv^{-2}|g||\nabla g|\Big)\\
-\tfrac1NC_V\Big(\int_{\R^d}\rv^{-1}w_{\ell,\theta,q}^2g^2+\int_{\R^d}\rv^{-2}w_{\ell,\theta,q}^2|g||\nabla g|\Big),
\end{multline*}
hence, in view of~\eqref{eq:dissipation} and~\eqref{eq:dissipation-w},
\begin{equation*}
I_1(g)
\,\ge\,\3g\3_{\ell,\theta,K}^2-C_{V,\ell,\theta,K,N}\3g\3^2
-\tfrac1NC_V\3g\3_{\ell,\theta,K}^2.
\end{equation*}
Choosing $N\ge 2C_V$, this yields the claim~\eqref{eq:bnd-I1}.

\medskip
\substep{1.3} Proof that
\begin{equation}\label{eq:bnd-I234}
|I_2(g)|+|I_3(g)|+|I_4(g)|\,\lesssim_{V,\ell,\theta,K}\,\3g\3^2+\|g\|_{H^2(\R^d)}\3g\3_{\ell,\theta,K}^2.
\end{equation}
We start with the bound on $I_2$. By definition of $\Bc$ and of the collision kernel $B$, cf.~\eqref{eq:def-Bc} and~\eqref{eq:LB-kernel}, and by Lemma~\ref{lem:eps-bndd-R}(ii), we get
\begin{equation*}
|I_2(g)|\,\lesssim_V\,\iint_{\R^d\times\R^d}|v-v_*|^{-1}\sqrt\mu\sqrt\mu_*|(\nabla+v)(w_{\ell,\theta,K}^2g)||((\nabla+v)g)_*|\,dvdv_*.
\end{equation*}
Appealing to the Hardy--Littlewood--Sobolev inequality and using the Gaussian decay of $\sqrt\mu w_{\ell,\theta,K}^2$ (provided $K\ll1$ small enough in case $\theta=2$), we deduce for all $r\ge0$,
\begin{eqnarray*}
|I_2(g)|
&\lesssim_V&\|\sqrt\mu (\nabla+v)(w_{\ell,\theta,K}^2g)\|_{\Ld^\frac{2d}{2d-1}(\R^d)}\|\sqrt\mu(\nabla+v)g\|_{\Ld^\frac{2d}{2d-1}(\R^d)}\\
&\lesssim_{\ell,\theta,K,r}&\|\rv^{-r} (\nabla+v)g\|_{\Ld^2(\R^d)}^2,
\end{eqnarray*}
and thus, by~\eqref{eq:dissipation},
\begin{equation*}
|I_2(g)|
\,\lesssim_{V,\ell,\theta,K,r}\,\3g\3^2.
\end{equation*}
We turn to the bound on $I_3$.
By~\eqref{eq:lemB-1} in Lemma~\ref{lem:B}, we get
\begin{multline*}
|I_3(g)|\,\lesssim_V\,\|w_{\ell,\theta,K}^{-1}(\nabla+v)(w_{\ell,\theta,K}^2g)\|_{\Ld^2_A(\R^d)}\|g\|_{H^2(\R^d)}\\
\times\Big(\|w_{\ell,\theta,K}\nabla g\|_{\Ld^2_A(\R^d)}+\|\rv^{-\frac12}w_{\ell,\theta,K} g\|_{\Ld^2(\R^d)}\Big),
\end{multline*}
and thus, by Lemma~\ref{lem:A}(i), noting again that
\begin{equation}\label{eq:prop-wellthetK}
\nabla w_{\ell,\theta,K}^2=P_v\nabla w_{\ell,\theta,K}^2,\qquad|\nabla w_{\ell,\theta,K}^2|\lesssim_{\ell,\theta,K}\rv w_{\ell,\theta,K}^2,
\end{equation}
and recalling~\eqref{eq:dissipation-w}, we deduce
\begin{equation*}
|I_3(g)|\,\lesssim_{V,\ell,\theta,K}\,\|g\|_{H^2(\R^d)}\3g\3_{\ell,\theta,K}^2.
\end{equation*}
We turn to the bound on $I_4$.
By definition of $\Bc$ and of the collision kernel $B$, cf.~\eqref{eq:def-Bc} and~\eqref{eq:LB-kernel}, Lemma~\ref{lem:eps-bndd-R}(i)--(ii) yields
\begin{multline*}
|I_4(g)|\,\lesssim_V\,\int_{\R^d}\bigg(\iint_{\R^d\times\R^d}|k|^2\widehat V(k)^2\delta(k\cdot(v-v_*))\Big|\int_{\R^d}\tfrac{k\cdot\nabla(\sqrt\mu g)_{**}}{k\cdot(v-v_{**})-i0}dv_{**}\Big|\,\dbar k\,\mu_*dv_*\bigg)\\
\times|(\nabla+v)(w_{\ell,\theta,q}^2g)||(\nabla+v)g|\,dv,
\end{multline*}
hence, by the Sobolev inequality and the boundedness of the Hilbert transform,
\begin{eqnarray*}
|I_4(g)|&\lesssim_V&\|g\|_{H^2(\R^d)}\int_{\R^d}\rv^{-1}|(\nabla+v)(w_{\ell,\theta,q}^2g)||(\nabla+v)g|\,dv\\
&\lesssim&\|g\|_{H^2(\R^d)}\|\rv^{-\frac12}w_{\ell,\theta,K}^{-1}(\nabla+v)(w_{\ell,\theta,K}^2g)\|_{\Ld^2(\R^d)}\|w_{\ell,\theta,K}(\nabla+v)g\|_{\Ld^2(\R^d)}.
\end{eqnarray*}
Improving on the weights $\rv^{-\frac12}$ as in~\eqref{eq:improved-Gbnd0}, we deduce
\begin{equation*}
|I_4(g)|\,\lesssim\,\|g\|_{H^2(\R^d)}\|w_{\ell,\theta,K}^{-1}(\nabla+v)(w_{\ell,\theta,K}^2g)\|_{\Ld^2_A(\R^d)}\|w_{\ell,\theta,K}(\nabla+v)g\|_{\Ld^2_A(\R^d)}.
\end{equation*}
hence, by~\eqref{eq:dissipation-w} and~\eqref{eq:prop-wellthetK},
\begin{equation*}
|I_4(g)|\,\lesssim_{V,\ell,\theta,K}\,\|g\|_{H^2(\R^d)}\3g\3^2_{\ell,\theta,K}.
\end{equation*}
This proves the claim~\eqref{eq:bnd-I234}.

\medskip
\substep{1.4} Proof of~\eqref{eq:compact-est}.\\
Combining~\eqref{eq:bnd-I1} and~\eqref{eq:bnd-I234}, we get for the solution $f$ of~\eqref{eq:g-reform-re},
\[\tfrac12\partial_t\int_{\R^d}w_{\ell,\theta,K}^2|f|^2+\tfrac12\3f\3_{\ell,\theta,K}^2\,\lesssim_{V,\ell,\theta,K}\,\3f\3^2+\|f\|_{H^2(\R^d)}\3f\3^2_{\ell,\theta,K}.\]
Provided $\|f^\circ\|_{H^2(\R^d)}\ll_{V,\beta_0,\ell,\theta,K}1$, the stability estimate in Theorem~\ref{th:global} yields $\|f^t\|_{H^2(\R^d)}\ll_{V,\ell,\theta,K}1$ for all $t\ge0$, hence the above becomes
\[\tfrac12\partial_t\int_{\R^d}w_{\ell,\theta,K}^2|f|^2+\tfrac14\3f\3_{\ell,\theta,K}^2\,\lesssim_{V,\ell,\theta,K}\,\3f\3^2,\]
or alternatively, after integration,
\[\int_{\R^d}w_{\ell,\theta,K}^2|f^t|^2+\tfrac12\int_0^t\3f^s\3_{\ell,\theta,K}^2ds\,\le\,\int_{\R^d}w_{\ell,\theta,K}^2|f^\circ|^2+C_{V,\ell,\theta,K}\int_0^t\3f^s\3^2ds.\]
Recalling that~\eqref{eq:L2-dissip} yields
\begin{equation*}
\|f\|_{\Ld^2(\R^d)}^2+\tfrac1{C_V}\int_0^t\3f^s\3^2ds\,\le\,\|f^\circ\|_{\Ld^2(\R^d)}^2,
\end{equation*}
the claim~\eqref{eq:compact-est} follows.

\medskip
\step2 Proof of~(i)--(ii).\\
Given $\e>0$, we can estimate in view of~\eqref{eq:dissipation},
\[\3g\3\,\gtrsim_V\,\int_{\R^d}\rv^{-1}|g|^2\,\ge\,\rt^{-\e}\int_{\R^d}\mathds1_{\rv\le\rt^\e}|g|^2,\]
so that \eqref{eq:L2-dissip} becomes
\[\partial_t\int_{\R^d}|f|^2+\tfrac1{C_V}\rt^{-\e}\int_{\R^d}\mathds1_{\rv\le\rt^\e}|f|^2\,\le\,0,\]
or equivalently,
\[\partial_t\int_{\R^d}|f|^2+\tfrac1{C_V}\rt^{-\e}\int_{\R^d}|f|^2\,\le\,\tfrac1{C_V}\rt^{-\e}\int_{\R^d}\mathds1_{\rv>\rt^\e}|f|^2.\]
By integration, this entails
\begin{multline*}
\int_{\R^d}|f^t|^2\,\le\,e^{-\frac1{1-\e}\frac1{C_V}\rt^{1-\e}}\int_{\R^d}|f^\circ|^2\\
+\tfrac1{C_V}\int_0^t\rs^{-\e}e^{-\frac1{1-\e}\frac1{C_V}(\rt^{1-\e}-\rs^{1-\e})}\Big(\int_{\R^d}\mathds1_{\rv>\rs^\e}|f^s|^2\Big)ds.
\end{multline*}
Appealing to~\eqref{eq:compact-est} in form of
\begin{equation*}
\int_{\R^d}\mathds1_{\rv>\rs^\e}|f^s|^2\,\lesssim_{V,\ell,\theta,K}\,\rs^{-2\e\ell}e^{-2K\rs^{\e\theta}}\int_{\R^d}w_{\ell,\theta,K}^2|f^\circ|^2,
\end{equation*}
we deduce
\begin{multline*}
\int_{\R^d}|f^t|^2\,\le\,\Big(e^{-\frac1{1-\e}\frac1{C_V}\rt^{1-\e}}
+C_{V,\ell,\theta,K}\int_0^te^{-\frac1{1-\e}\frac1{C_V}(\rt^{1-\e}-\rs^{1-\e})}\rs^{-2\e\ell-\e}e^{-2K\rs^{\e\theta}}ds\Big)\\
\times\int_{\R^d}w_{\ell,\theta,K}^2|f^\circ|^2.
\end{multline*}
This yields the conclusion~(i)--(ii) after straightforward computations.
\qed

\section{Local well-posedness away from equilibrium}\label{sec:away-equ}
This section is devoted to the proof of Theorem~\ref{th:local}.
The proof follows the same lines as the proof of Proposition~\ref{prop:loc-wp-equ}: most steps are adaptations of their counterparts in Section~\ref{sec:global}, hence for the sake of conciseness we shall focus on the points that differ.
Given initial data~$F^\circ$, we decompose the solution $F$ as
\begin{equation}\label{eq:F-decomp-F0}
F \,=\, F^\circ+G.
\end{equation}
In order to be able to control the nonlinear term, we shall construct the small perturbation~$G$ locally in time in some Sobolev space with polynomial weight.
We introduce short-hand notation for these standard weighted Sobolev spaces:
for $m\in \R$, $s\ge0$, and $1\le p<\infty$,
let $\Ld^p_m(\R^d)$ and $H^s_m(\R^d)$ be the weighted Lebesgue and Sobolev spaces with norms
\begin{eqnarray*}
\|F\|^p_{\Ld^p_m(\R^d)} &=& \int_{\R^d} \langle v\rangle^m|F|^p,\\
\|F\|^2_{H^s_m(\R^d)} &=& \int_{\R^d}\langle v\rangle^m |\langle\nabla\rangle^s F|^2,
\end{eqnarray*}
and we define the analogue of the energy dissipation norm~\eqref{eq:def-L2A} in this functional setting,
\begin{equation*}
\|F\|^2_{\Ld^2_{A,m}(\R^d)}= \int_{\R^d} \langle v\rangle^m\,F\cdot A F.
\end{equation*}
We make abundant use of the following elementary estimate: given $r\ge0$ and $1\le p\le 2$, we have for all $m>2r+d\frac{2-p}p$,
\begin{align*}
\| \rv^r F \|_{\Ld^p(\R^d)} \lesssim_m \|F\|_{\Ld^2_m(\R^d)}.
\end{align*}
We start by stating that the estimates for the dispersion function $\e$ in Lemma~\ref{lem:eps-bndd-R} carry over to these weighted spaces; the proof is omitted.

\begin{samepage}
\begin{lem}\label{lem:dielectric2}
Let $V\in\Ld^1(\R^d)$.
\begin{enumerate}[(i)]
\item \emph{Non-degeneracy:}
Provided $F^\circ\in\Ld^1(\R^d)$ satisfies the following non-degeneracy and boundedness conditions for some $\delta_0,M_0>0$,
\[\inf_{k,v\in\R^d}|\e(k,k\cdot v;\nabla F^\circ)|\,\ge\,\tfrac1{M_0},\qquad\|\langle\nabla\rangle^{\frac32+\delta_0}F^\circ\|_{\Ld^2_{d-1+\delta_0}(\R^d)}\le M_0,\]
and provided $G\in\Ld^1(\R^d)$ satisfies the following smallness condition for some large enough constant $C_0$,
\begin{equation*}
\|\langle\nabla\rangle^{\frac32+\delta_0}G\|_{\Ld^2_{d-1+\delta_0}(\R^d)}\,\leq\,\tfrac{1}{C_0},
\end{equation*}
we have for all $k,v\in\R^d$,
\begin{align*}
|\e(k,k\cdot v;\nabla (F^\circ +G))| \,\simeq_{V,M_0,\delta_0}\,1.
\end{align*}
\item \emph{Boundedness:} For all $\alpha>0$ and $\delta>0$, we have for all $k,v\in\R^d$,
\begin{equation*}
|\nabla^\alpha_v \e(k,k\cdot v;\nabla F)| \,\lesssim_{V,\alpha,\delta}\,\|\langle \nabla \rangle^{|\alpha|+\frac32+\delta} F\|_{\Ld^2_{d-1+\delta}(\R^d)}.\qedhere
\end{equation*}
\end{enumerate} 
\end{lem}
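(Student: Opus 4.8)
The plan is to derive both items from the single pointwise estimate: for every multi-index $\alpha\ge0$, every $\delta>0$, and all $k,v\in\R^d$, writing $\hat k:=k/|k|$,
\[
\widehat V(k)\,\Big|\int_{\R^d}\tfrac{\hat k\cdot\nabla_*\nabla_*^\alpha F(v_*)}{\hat k\cdot(v-v_*)-i0}\,dv_*\Big|\,\lesssim_{V,\alpha,\delta}\,\|\langle\nabla\rangle^{|\alpha|+\frac32+\delta}F\|_{\Ld^2_{d-1+\delta}(\R^d)}.
\]
Item~(ii) is precisely this estimate in view of the representation~\eqref{eq:diff-eps-form}. For item~(i), I would use that the dispersion function is affine in $\nabla F$, cf.~\eqref{eq:LB-disp}, so that
\[
\e(k,k\cdot v;\nabla(F^\circ+G))\,=\,\e(k,k\cdot v;\nabla F^\circ)+\widehat V(k)\int_{\R^d}\tfrac{\hat k\cdot\nabla_*G_*}{\hat k\cdot(v-v_*)-i0}\,dv_*.
\]
The estimate above with $\alpha=0$ bounds the last term by $C_{V,\delta_0}/C_0\le\tfrac1{2M_0}$ once $C_0$ is large enough, giving the lower bound $|\e(k,k\cdot v;\nabla(F^\circ+G))|\ge\tfrac1{2M_0}$, while the matching upper bound $\lesssim_{V,M_0,\delta_0}1$ follows by applying the same estimate to $F^\circ$ and $G$ and using the hypothesis $\|\langle\nabla\rangle^{\frac32+\delta_0}F^\circ\|_{\Ld^2_{d-1+\delta_0}}\le M_0$.

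To prove the core estimate I would run the argument for Lemma~\ref{lem:eps-bndd-R} essentially verbatim, the only change being the step where Gaussian decay was used. After bounding $\widehat V(k)\le\|V\|_{\Ld^1(\R^d)}$ (legitimate since $V$ is positive definite), I would split $v_*\in\R^d$ as $v_*=y\hat k+w$ with $y\in\R$ and $w\in\hat k^\bot$, so that the integral becomes $\int_\R\psi_k'(y)\,\big((\hat k\cdot v)-y-i0\big)^{-1}dy$ with the one-dimensional marginal $\psi_k(y):=\int_{\hat k^\bot}\nabla^\alpha F(y\hat k+w)\,dw$. By Plemelj's formula this is a combination of $\psi_k'(\hat k\cdot v)$ and the Hilbert transform of $\psi_k'$ evaluated at $\hat k\cdot v$; applying the one-dimensional Sobolev embedding $H^{\frac12+\delta}(\R)\hookrightarrow\Ld^\infty(\R)$ along the line $\hat k\R$ together with the $\Ld^2$-boundedness of the Hilbert transform (which commutes with fractional derivatives), the whole expression is controlled by $\|\psi_k'\|_{H^{\frac12+\delta}(\hat k\R)}\le\|\psi_k\|_{H^{\frac32+\delta}(\hat k\R)}$.

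The heart of the matter, and the place where Gaussian decay is now replaced by the polynomial weight, is the estimate of $\|\psi_k\|_{H^{\frac32+\delta}(\hat k\R)}$ by a weighted Sobolev norm of $F$. By the Fourier slice theorem, $\langle\partial_y\rangle^{\frac32+\delta}\psi_k(y)=\int_{\hat k^\bot}\big(\langle\hat k\cdot\nabla\rangle^{\frac32+\delta}\nabla^\alpha F\big)(y\hat k+w)\,dw$, where $\langle\hat k\cdot\nabla\rangle^s$ is the Fourier multiplier $\langle\hat k\cdot\xi\rangle^s$. I would then apply Cauchy--Schwarz in $w$ against the factor $\langle y\hat k+w\rangle^{-(d-1+\delta)}$, whose integral over the $(d-1)$-dimensional hyperplane $\hat k^\bot$ equals $\int_{\R^{d-1}}(1+y^2+|w|^2)^{-(d-1+\delta)/2}\,dw\simeq_{d,\delta}\langle y\rangle^{-\delta}$ — finite precisely because $\delta>0$, and this is exactly what forces the weight exponent $d-1+\delta$ in the statement. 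Squaring, integrating in $y\in\R$, and recombining into a $d$-dimensional integral then gives $\|\psi_k\|_{H^{\frac32+\delta}(\hat k\R)}^2\lesssim_{d,\delta}\int_{\R^d}\langle v\rangle^{d-1+\delta}\,\big|\langle\hat k\cdot\nabla\rangle^{\frac32+\delta}\nabla^\alpha F\big|^2$, and I would conclude using $|\hat k\cdot\xi|\le|\xi|$ to replace $\langle\hat k\cdot\nabla\rangle^{\frac32+\delta}\nabla^\alpha$ by $\langle\nabla\rangle^{|\alpha|+\frac32+\delta}$.

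I expect the only genuinely delicate point to be this last replacement: the polynomial weight $\langle v\rangle^{d-1+\delta}$ does not commute with the fractional Fourier multipliers $\langle\hat k\cdot\nabla\rangle^{\frac32+\delta}$ and $\langle\nabla\rangle^{\frac32+\delta}$, so one has to commute them through by standard pseudodifferential/commutator estimates, producing lower-order terms that are harmlessly absorbed into $\|\langle\nabla\rangle^{|\alpha|+\frac32+\delta}F\|_{\Ld^2_{d-1+\delta}(\R^d)}$, uniformly in $\hat k$. Everything else is a direct transcription of the proof of Lemma~\ref{lem:eps-bndd-R}, which is why the paper leaves the proof to the reader.
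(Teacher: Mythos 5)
The paper offers no proof of this lemma (it explicitly omits it, saying the estimates of Lemma~\ref{lem:eps-bndd-R} ``carry over''), so there is no official argument to compare against. Your reduction to a single one-dimensional estimate, the deduction of item~(i) from the affine dependence of $\e$ on $\nabla F$, and your identification of the replacement of the Gaussian weight by a polynomial one as the only new ingredient are all correct. The difficulty is in the last step of the core estimate, and it is a genuine gap rather than a technicality. After your Cauchy--Schwarz you arrive at
\[
\|\psi_k\|_{H^{\frac32+\delta}(\hat k\R)}^2\,\lesssim_{d,\delta}\,\int_{\R^d}\langle v\rangle^{d-1+\delta}\big|\langle\hat k\cdot\nabla\rangle^{\frac32+\delta}\nabla^\alpha F\big|^2,
\]
and you want to pass to $\|\langle\nabla\rangle^{|\alpha|+\frac32+\delta}F\|_{\Ld^2_{d-1+\delta}(\R^d)}$ by invoking $|\hat k\cdot\xi|\le|\xi|$. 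Pointwise domination of Fourier symbols gives the \emph{unweighted} $\Ld^2$ comparison via Plancherel, but says nothing about comparison in $\Ld^2(\langle v\rangle^{d-1+\delta}dv)$. The operator $\langle\hat k\cdot\nabla\rangle^{\frac32+\delta}\nabla^\alpha\langle\nabla\rangle^{-|\alpha|-\frac32-\delta}$ is a zero-order Calder\'on--Zygmund operator, and the standard weighted theory only gives boundedness on $\Ld^2(\langle v\rangle^{d-1+\delta}dv)$ when the weight is $A_2$, i.e.\ when $d-1+\delta<d$, i.e.\ $\delta<1$; the lemma is stated for all $\delta>0$, so this route does not obviously close.

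The obstruction disappears entirely if you swap the order of operations and compare multipliers on the one-dimensional Fourier side \emph{before} introducing the weight. By the Fourier slice theorem $\hat\psi_k(\eta)=(i\eta)^{|\alpha|}\hat k^\alpha\,\hat F(\eta\hat k)$ with $|\hat k^\alpha|\le1$, while $\phi_k(y):=\int_{\hat k^\bot}\big(\langle\nabla\rangle^{|\alpha|+\frac32+\delta}F\big)(y\hat k+w)\,dw$ has $\hat\phi_k(\eta)=\langle\eta\rangle^{|\alpha|+\frac32+\delta}\hat F(\eta\hat k)$. Hence $\langle\eta\rangle^{\frac32+\delta}|\hat\psi_k(\eta)|\le|\hat\phi_k(\eta)|$ pointwise, and Plancherel on $\hat k\R$ gives $\|\psi_k\|_{H^{\frac32+\delta}(\hat k\R)}\le\|\phi_k\|_{\Ld^2(\hat k\R)}$. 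Now apply your Cauchy--Schwarz step to $\phi_k$, which is already a transversal integral of $\langle\nabla\rangle^{|\alpha|+\frac32+\delta}F$ itself; this yields $\|\phi_k\|_{\Ld^2(\hat k\R)}\lesssim_{d,\delta}\|\langle\nabla\rangle^{|\alpha|+\frac32+\delta}F\|_{\Ld^2_{d-1+\delta}(\R^d)}$ for every $\delta>0$, uniformly in $\hat k$, with no commutators at all. (A small aside: $|\widehat V(k)|\le\|V\|_{\Ld^1(\R^d)}$ holds for any $V\in\Ld^1$; positive definiteness is not needed for that inequality, nor is it assumed in this lemma.)
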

\end{samepage}

As we no longer use Maxwellian weights in this section, cf.~\eqref{eq:F-decomp-F0}, we consider the following linear operator $\Bo(\nabla F)$, instead of $\Bc(\nabla F)$ in~\eqref{eq:def-Bc},
\begin{align}\label{eq:defBo}
\Bo(\nabla F)[H](v) \,:=\, \int_{\R^d}B(v,v-v_*;\nabla F) \,H_*\,dv_*.
\end{align}
The following analogue of the coercivity estimate of Lemma~\ref{lem:A} is easily obtained.

\begin{lem}\label{lem:coercivity}
Let $V\in\Ld^1(\R^d)$ be isotropic, let $F^\circ\in\Ld^1(\R^d)$ be nonnegative and satisfy for some $M_0>0$ and $v_0\in\R^d$,
\[\inf_{|\cdot-v_0|\le\frac1{M_0}}F^\circ\ge\tfrac1{M_0},\]
and let $F\in\Ld^1(\R^d)$ satisfy
\begin{align*}
|\e(k,k\cdot v;\nabla F)| \,\le\,M_0.
\end{align*}
Then the following coercivity estimate holds,
\begin{equation*}
e\cdot\Bo(\nabla F)[F^\circ](v) \,e \,\gtrsim_{V,M_0,v_0}\, \rv^{-1}|P^\perp_v e|^2+\rv^{-3}|P_v e|^2.\qedhere
\end{equation*}
\end{lem}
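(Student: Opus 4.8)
The plan is to reduce the estimate to the explicit Landau-type kernel, exactly as in the proof of Lemma~\ref{lem:A}(i). First I would recall that, by definition~\eqref{eq:LB-kernel} of the collision kernel and~\eqref{eq:defBo}, for any vector $e\in\R^d$,
\[
e\cdot\Bo(\nabla F)[F^\circ](v)\,e\,=\,\iint_{\R^d\times\R^d}(e\cdot k)^2\,\pi\widehat V(k)^2\,\tfrac{\delta(k\cdot(v-v_*))}{|\e(k,k\cdot v;\nabla F)|^2}\,\dbar k\,F^\circ(v_*)\,dv_*\,\ge\,0.
\]
Under the hypothesis $|\e(k,k\cdot v;\nabla F)|\le M_0$, the dispersion factor $|\e(k,k\cdot v;\nabla F)|^{-2}$ is bounded below by $M_0^{-2}$, so that
\[
e\cdot\Bo(\nabla F)[F^\circ](v)\,e\,\gtrsim_{M_0}\,\iint_{\R^d\times\R^d}(e\cdot k)^2\,\pi\widehat V(k)^2\,\delta(k\cdot(v-v_*))\,\dbar k\,F^\circ(v_*)\,dv_*.
\]
Then, using the explicit computation~\eqref{eq:expl-Landau-pr} (valid since $V$ is isotropic), the inner $k$-integral equals $\tfrac{L}{|v-v_*|}P^\perp_{v-v_*}$, so the right-hand side is bounded below by $\tfrac{L}{M_0^2}\int_{\R^d}\tfrac{1}{|v-v_*|}\,e\cdot P^\perp_{v-v_*}e\,F^\circ(v_*)\,dv_*$.

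Next I would exploit the lower bound on $F^\circ$. By hypothesis $F^\circ(v_*)\ge\tfrac1{M_0}$ on the ball $|v_*-v_0|\le\tfrac1{M_0}$, and since $F^\circ\ge0$ everywhere we may simply discard the rest of the integral, obtaining
\[
e\cdot\Bo(\nabla F)[F^\circ](v)\,e\,\gtrsim_{V,M_0}\,\int_{|v_*-v_0|\le\frac1{M_0}}\tfrac{1}{|v-v_*|}\,\Big(|e|^2-\tfrac{(e\cdot(v-v_*))^2}{|v-v_*|^2}\Big)\,dv_*.
\]
It remains to show that this last integral is bounded below by $\rv^{-1}|P^\perp_v e|^2+\rv^{-3}|P_v e|^2$ up to a constant depending on $v_0$ and $M_0$. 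For $|v|$ bounded (say $|v|\le 2|v_0|+\tfrac{2}{M_0}$) the integral is clearly bounded below by a positive constant times $|e|^2$ provided one checks the integrand does not degenerate identically on the small ball; this is an elementary compactness/continuity argument, using that the directions $v-v_*$ as $v_*$ ranges over a ball span more than a single line unless the ball is degenerate, which it is not. For $|v|$ large, on the ball $|v_*-v_0|\le\tfrac1{M_0}$ we have $|v-v_*|\simeq_{v_0,M_0}|v|\simeq\rv$ and $\tfrac{v-v_*}{|v-v_*|}$ differs from $\tfrac{v}{|v|}$ by $O(\rv^{-1})$; splitting $e=P_ve+P^\perp_ve$ and estimating $|e\cdot(v-v_*)|\lesssim|P_ve||v|+|P^\perp_ve|$ gives $|e|^2-\tfrac{(e\cdot(v-v_*))^2}{|v-v_*|^2}\gtrsim|P^\perp_ve|^2-C\rv^{-2}|P_ve|^2+\ldots$, and after dividing by $|v-v_*|\simeq\rv$ and integrating over the fixed-volume ball one recovers $\gtrsim\rv^{-1}|P^\perp_ve|^2$. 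The contribution $\rv^{-3}|P_ve|^2$ is then obtained by replacing $e$ by its projections and using that $e\cdot P^\perp_{v-v_*}e$ is quadratic: more precisely, expanding $P^\perp_{v-v_*}=P^\perp_v+O(\rv^{-1})$ one finds $P_ve\cdot P^\perp_{v-v_*}P_ve=O(\rv^{-2})|P_ve|^2$ with a genuine lower bound of order $\rv^{-2}$ coming from the transverse spread of $v-v_*$ over the ball, and dividing by $|v-v_*|\simeq\rv$ yields the $\rv^{-3}$ term.

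The main obstacle I anticipate is the large-$|v|$ anisotropic lower bound: one must carefully track that the projection $P^\perp_{v-v_*}$, averaged over the fixed small ball $\{|v_*-v_0|\le\tfrac1{M_0}\}$, genuinely sees all directions transverse to $v$ with a uniform (in $v$) lower bound, and in particular that the component along $v$ is controlled from below by $\gtrsim\rv^{-2}$ rather than vanishing. This is a quantitative non-degeneracy statement for the matrix $\int_{\text{ball}}\tfrac{1}{|v-v_*|}P^\perp_{v-v_*}\,dv_*$, and the cleanest way to handle it is to diagonalize this matrix using the $O_d(\R)$-equivariance in the directions orthogonal to $v$ (the ball is not rotationally symmetric about $v$, but one can bound it below by a rotationally symmetric integral over a slightly smaller centered ball), reducing to scalar integrals of the type $\int \tfrac{1}{|v-v_*|}\big(1-(\tfrac{v-v_*}{|v-v_*|}\cdot \hat v)^2\big)dv_*$ whose $\rv^{-1}$ and $\rv^{-3}$ asymptotics are computed exactly as in Step~2 of the proof of Lemma~\ref{lem:A}. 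Everything else is routine.
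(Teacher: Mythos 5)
Your proposal follows the paper's strategy: bound $|\e|^{-2}$ below by $M_0^{-2}$, restrict the integral in $v_*$ to the ball $\{|v_*-v_0|\le\frac1{M_0}\}$ where $F^\circ\ge\frac1{M_0}$, use the explicit formula~\eqref{eq:expl-Landau-pr} for the $k$-integral, and then extract the $\rv^{-1}$ and $\rv^{-3}$ asymptotics of the resulting matrix $\int_{\text{ball}}\frac1{|v-v_*|}P^\perp_{v-v_*}\,dv_*$. The first two stages are identical to the paper's. In the final stage your primary route — expanding $P^\perp_{v-v_*}=P^\perp_v+O(\rv^{-1})$ and controlling cross terms by hand — is delicate precisely at the point you flag (the $\rv^{-2}|P_ve|^2$ lower bound can a priori be spoiled by the mixed $P_ve\cdot P^\perp_{v-v_*}P_v^\bot e$ terms), and the $O_d$-equivariance cleanup you sketch is slightly mis-aimed: after the change of variables $w=v-v_*$ the relevant ball is centered at $v-v_0$, so the integral is rotationally symmetric about the axis $(v-v_0)\R$, not about $v\R$. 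The paper uses exactly this: it applies the Lemma~\ref{lem:A} eigenvalue computation centered at $v_0$, yielding $\gtrsim\langle v-v_0\rangle^{-1}|P^\bot_{v-v_0}e|^2+\langle v-v_0\rangle^{-3}|P_{v-v_0}e|^2$, and only afterwards shifts to projections along $v$ using $\langle v-v_0\rangle\simeq_{v_0}\langle v\rangle$ and $|P_{v-v_0}e-P_ve|\lesssim_{v_0}\rv^{-1}|e|$. That two-step (diagonalize about $v_0$, then shift) is the clean way to avoid the cross-term bookkeeping; your draft would be correct if you replaced "rotationally symmetric about $v$" / "smaller centered ball" by this $v_0$-centered reduction followed by the shift.
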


\begin{proof}
The definition of $\Bo$, the lower bound on $F^\circ$, the condition on $F$, and the identity~\eqref{eq:expl-Landau-pr} yield
\begin{eqnarray*}
e\cdot\Bo(\nabla F)[F^\circ](v)\,e&\gtrsim_{M_0}&\int_{|v_*-v_0|\le\frac1{M_0}}\bigg(\int_{\R^d}|e\cdot k|^2\pi\widehat V(k)^2\delta(k\cdot(v-v_*))\,\dbar k\bigg)\,dv_*\\
&\simeq_{V}&\int_{|v_*-v_0|\le\frac1{M_0}}\tfrac1{|v-v_*|}|P_{v-v_*}^\bot e|^2\,dv_*.
\end{eqnarray*}
Arguing as in the proof of Lemma~\ref{lem:A}, we may then deduce
\begin{eqnarray*}
e\cdot\Bo(\nabla F)[F^\circ](v)\,e&\gtrsim_{V,M_0}&\langle v-v_0\rangle^{-1}|P_{v-v_0}^\bot e|^2+\langle v-v_0\rangle^{-3}|P_{v-v_0}e|^2.
\end{eqnarray*}
Using that $\langle v-v_0\rangle\lesssim_{v_0}\langle v\rangle$ and that $|P_{v-v_0}e-P_{v}e|\lesssim_{v_0}\langle v\rangle^{-1}|e|$, the conclusion easily follows.
\end{proof}

Finally, we will also need the following analogue of~\eqref{eq:lemB-1} and~\eqref{eq:lemB-3} in Lemma~\ref{lem:B} for boundedness properties of the operator~$\Bo$; the proof is similar and omitted.

\begin{lem}
Let $d>2$, let $V\in\Ld^1\cap\dot H^2(\R^d)$ be isotropic, and assume $xV\in\Ld^2(\R^d)$.
Provided $F^\circ\in\Ld^1(\R^d)$ satisfies the following non-degeneracy and boundedness conditions for some $M_0>0$,
\[\inf_{k,v\in\R^d}|\e(k,k\cdot v;\nabla F^\circ)|\,\ge\,\tfrac1{M_0},\qquad\|F^\circ\|_{H^2_{d}(\R^d)}\le M_0,\]
and provided $G\in\Ld^1(\R^d)$ satisfies the following smallness condition for some large enough constant $C_0$,
\begin{equation*}
\|G\|_{H^2_{d}(\R^d)}\,\leq\,\tfrac{1}{C_0},
\end{equation*}
we have for all vector fields $h_1,h_2$, all $\alpha\ge0$, and $m>d+7$,
\begin{multline}\label{eq:Gbound1}
\Big| \int_{\R^d} h_1\cdot \big(\nabla^\alpha \Bo (\nabla (F^\circ+G))[H]\big) \,h_2 \Big|
\,\lesssim_{V,m,M_0,\alpha} \,\|h_1\|_{\Ld^2_{A}(\R^d)} \|h_2\|_{\Ld^2_{A}(\R^d)}\\
\times\|H\|_{H^{|\alpha|+1}_{m}(\R^d)}\Big(1+\|G\|^{|\alpha|}_{H^{|\alpha|+1}_{d}(\R^d)}+ \mathds1_{\alpha>0} \|G\|_{H^{|\alpha|+2}_{d}(\R^d)}\Big),
\end{multline}
and alternatively, if needed to loose less derivatives on $G,H$,
\begin{align}\label{eq:Gbound3}
&\Big|\int_{\R^d}h_1\cdot \big(\nabla^\alpha\overline\Bc(\nabla (F^\circ+G))[H]\big)\,h_2\Big|
\,\lesssim_{V,m,M_0,\alpha}\,\|h_1\|_{\Ld^2_A(\R^d)}\|\rnabla h_2\|_{\Ld^2_A(\R^d)}\\
&\hspace{7cm}\times\|H\|_{H_m^{|\alpha|\vee2}(\R^d)}
\Big(1+\|G\|_{H^{|\alpha|\vee3}_d(\R^d)}^{|\alpha|\vee2}\Big).\nonumber\qedhere
\end{align}
\end{lem}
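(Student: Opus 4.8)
The plan is to transcribe the proof of Lemma~\ref{lem:B}, replacing the Maxwellian weight $\sqrt\mu_*$ carried by the collision operator in~\eqref{eq:def-Bc} by the polynomial weight $\langle v_*\rangle^{-m/2}$, and replacing Lemma~\ref{lem:eps-bndd-R} by Lemma~\ref{lem:dielectric2}. The non-degeneracy and boundedness hypotheses on $F^\circ$ together with the smallness of $G$ ensure, through Lemma~\ref{lem:dielectric2}(i), that $\e(k,k\cdot v;\nabla(F^\circ+G))$ is uniformly bounded away from $0$, so that $\Bo(\nabla(F^\circ+G))$ is well-defined and its kernel satisfies $|B(v,v-v_*;\nabla(F^\circ+G))|\lesssim_{V,M_0}|v-v_*|^{-1}$ exactly as in~\eqref{eq:homogvvst-1}.

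First I would decompose $\nabla^\alpha\Bo(\nabla(F^\circ+G))[H]$ by Leibniz' rule as in~\eqref{eq:decomp-Ggamalph}--\eqref{eq:def-Ggamalph}, with $\nabla^\gamma_*(\sqrt\mu_*h_*)$ there replaced by $\nabla^\gamma_*H_*$, and for each summand estimate the $v$-derivatives of $|\e(k,k\cdot v;\nabla(F^\circ+G))|^{-2}$ through Fa\`a di Bruno's formula~\eqref{eq:diff-1/eps0} and Lemma~\ref{lem:dielectric2} — separating, as in the passage from~\eqref{eq:diff-1/eps0} to~\eqref{eq:prepre-bnd-G}, the term with a single factor of $\nabla\e$ so as to produce the refinement $\mathds1_{\alpha>0}\|G\|_{H^{|\alpha|+2}_d}$. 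The $F^\circ$-contribution is absorbed into the constant $\lesssim_{V,m,M_0,\alpha}$ and the $G$-contribution produces exactly the factors $1+\|G\|^{|\alpha|}_{H^{|\alpha|+1}_d}+\mathds1_{\alpha>0}\|G\|_{H^{|\alpha|+2}_d}$ of~\eqref{eq:Gbound1}. I would then write $|H_*|=\langle v_*\rangle^{-m/2}\cdot\langle v_*\rangle^{m/2}|H_*|$, split the singularity via $|v-v_*|^{-1}\le\langle v\rangle^{-1}(\langle v_*\rangle+|v-v_*|^{-1})$ as in~\eqref{eq:v-vs-bnd}, and, in dimension $d>2$, control the residual factor $|v-v_*|^{-1}$ by the Hardy--Littlewood--Sobolev inequality together with the elementary embedding $\|\langle v\rangle^rF\|_{\Ld^p(\R^d)}\lesssim_m\|F\|_{\Ld^2_m(\R^d)}$ recalled in the text. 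This yields scalar weights $\langle v\rangle^{-1/2}$ on $h_1,h_2$, which are then upgraded to $\|h_1\|_{\Ld^2_A(\R^d)}\|h_2\|_{\Ld^2_A(\R^d)}$ precisely as in the proof of~\eqref{eq:hnabAh}, by decomposing $h_1,h_2$ along $P_v,P^\bot_v$ and trading each product with $v$ for a product with $v_*$ under the Dirac mass $\delta(k\cdot(v-v_*))$ (using Lemma~\ref{lem:A}(i)). Summing the resulting bounds over $\gamma\le\alpha$ gives~\eqref{eq:Gbound1}.

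For the sharper bound~\eqref{eq:Gbound3}, one follows Step~3 of Lemma~\ref{lem:B}: the summands with $|\gamma|\ge2$, and the one with $\gamma=\alpha$, are already controlled by the above, and it remains to improve the treatment of the summands with $|\gamma|\le1$. In the critical case $\gamma=0$ one isolates the linearization of $|\e|^{-2}$, which leaves the quantity $S_\alpha$ of~\eqref{eq:def-Sabbr0} (with $\sqrt\mu_*$ replaced by $\langle v_*\rangle^{-m/2}$); this is estimated by integrating by parts in $k$ and invoking the Radon-transform / Riesz-potential bounds of Substep~3.3 of the proof of Lemma~\ref{lem:B}, the key point, as there, being that the Sobolev embedding used to control the singular integral $\int\frac{\cdots}{\hat k\cdot(v-v_*)-i0}\,dv_*$ is applied only on the line $\hat k\R$, so that at most $|\alpha|\vee2$ derivatives of $H$ and $|\alpha|\vee3$ of $G$ are lost. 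Combining this with~\eqref{eq:Gbound1} for the remaining pieces yields~\eqref{eq:Gbound3}.

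The \emph{main obstacle} is the bookkeeping of polynomial weights. In Lemma~\ref{lem:B} the Gaussian decay of $\mu$ absorbs for free any finite power of $\langle v_*\rangle$ created by the Sobolev inequalities, by the integrations by parts in $k$, and by the factor $\langle v_*\rangle$ coming from~\eqref{eq:v-vs-bnd}; here the weight exponent must instead be tracked precisely through each Hardy--Littlewood--Sobolev / Sobolev step, both on $\R^d$ and on the $1$-dimensional sets $\hat k\R$. Balancing all these losses is exactly what forces the threshold $m>d+7$, and once this accounting is set up the argument is a routine transcription of Lemma~\ref{lem:B}.
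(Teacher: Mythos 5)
The paper omits the proof of this lemma, stating only that it is ``similar'' to that of Lemma~\ref{lem:B}, and your proposal carries out exactly that transcription: replace the Gaussian decay of $\sqrt\mu_*$ by the polynomial weights inherent in $H^s_m(\R^d)$, substitute Lemma~\ref{lem:dielectric2} for Lemma~\ref{lem:eps-bndd-R}, reuse the Leibniz/Fa\`a di Bruno decomposition and the $v\leftrightarrow v_*$ trade under the Dirac mass to upgrade to $\Ld^2_A$ norms, and for~\eqref{eq:Gbound3} reuse the $S_\alpha$ isolation and the Radon-transform bounds of Substep~3.3. Your identification of the weight bookkeeping as the only genuinely new point, and as the source of the threshold $m>d+7$, is the correct reading of what the authors mean by ``similar and omitted.''
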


With the above estimates at hand, we are now in position to prove local well-posedness away from equilibrium, that is, Theorem~\ref{th:local}.

\begin{proof}[Proof of Theorem~\ref{th:local}]
We proceed by constructing a sequence $\{F_n\}_n$ of approximate solutions: we choose the $0$th-order approximation as the initial data,
\[F_0:=F^\circ,\]
and next, for all $n\ge0$, given the $n$th-order approximation $F_n$, we iteratively define $F_{n+1}$ as the solution of the following linear Cauchy problem,
\begin{equation}\label{eq:Fn-recurr}
\left\{\begin{array}{l}
\partial_t F_{n+1} = \nabla\cdot \left(\Bo_n [F_n] \nabla F_{n+1} -F_{n+1}\Bo_n [\nabla F_n] \right),\\
F_{n+1}|_{t=0} = F^ \circ,
\end{array}\right.
\end{equation}
where we have set for abbreviation
$\Bo_n[G]:=\Bo(\nabla F_n)[G]$.
As $F^\circ$ is nonnegative by assumption, the maximum principle ensures that $F_n$ remains nonnegative for all $n$.
Decompose
\[F_n\,=\,F^\circ + G_n,\qquad G_n|_{t=0}=0,\]
let $s\ge2$ and $m>d+7$ be fixed,
and, given a constant $C_0>0$ to be later chosen large enough, define the maximal time
\begin{multline}\label{eq:def-Tn2}
T_n\,:=\,1\wedge\max\Big\{T\ge0~:~\|G_n\|_{\Ld^\infty([0,T];H^s_m(\R^d))}^2\\
+\tfrac1{C_0}\|\nabla\rnabla^s G_n\|_{\Ld^2([0,T];\Ld^2_{A,m}(\R^d))}^2\le \tfrac1{C_0^2}\Big\}.
\end{multline}
Equation~\eqref{eq:Fn-recurr} yields for all $|\alpha|\le s$,
\begin{equation}\label{eq:evol-L2m}
\tfrac12\partial_t \|\nabla^\alpha G_{n+1}\|^2_{\Ld^2_m(\R^d)} \,=\,\sum_{\gamma\le \alpha}\binom\alpha\gamma \Big(I^{\alpha}_\gamma(G_{n+1};F_n) +J^{\alpha}_\gamma(G_{n+1};F_n)\Big)+K^{\alpha}(G_{n+1};F_n),
\end{equation}
in terms of
\begin{eqnarray*}
I^{\alpha}_\gamma(G_{n+1};F_n) &:=& -\int_{\R^d} \langle v\rangle^m(\nabla\nabla^\alpha G_{n+1})\cdot (\nabla^{\alpha-\gamma}\Bo_n[F_n])\,\nabla \nabla^\gamma  (F^\circ+ G_{n+1}), \\
J^{\alpha}_\gamma(G_{n+1};F_n) &:=& \int_{\R^d} \langle v\rangle^m(\nabla \nabla^\alpha G_{n+1})\cdot (\nabla^{\gamma} \Bo_n[\nabla F_n])\,\nabla^{\alpha-\gamma} (F^\circ+ G_{n+1}),\\
K^{\alpha}(G_{n+1};F_n)  &:=& -\int_{\R^d}(\nabla\langle v\rangle^m)\,(\nabla^\alpha G_{n+1})\,\nabla^\alpha \Big(\Bo_n[F_n] \nabla F_{n+1} -F_{n+1}\Bo_n[\nabla F_n] \Big).
\end{eqnarray*}
We start by extracting the dissipation contained in the term $I^\alpha_\alpha$. For that purpose, we decompose
\begin{multline}\label{eq:decomp-Iaa}
I^{\alpha}_\alpha(G_{n+1};F_n) \,=\, -\int_{\R^d}\rv^m(\nabla\nabla^\alpha G_{n+1})\cdot\Bo_n[F^\circ] (\nabla\nabla^\alpha  G_{n+1})\\
-\int_{\R^d}\rv^m(\nabla\nabla^\alpha G_{n+1})\cdot\Bo_n[G_n] \nabla\nabla^\alpha (F^\circ+G_{n+1})\\
-\int_{\R^d}\rv^m(\nabla\nabla^\alpha G_{n+1})\cdot\Bo_n[F^\circ] (\nabla\nabla^\alpha F^\circ).
\end{multline}
In view of assumptions~\eqref{eq:as-F0} on $F^\circ$ and in view of the definition~\eqref{eq:def-Tn2} of $T_n$, provided $C_0$ is chosen large enough, Lemma~\ref{lem:dielectric2}(i) yields for $t\le T_n$,
\[|\e(k,k\cdot v;\nabla F_n)|\,\simeq_{V,M_0}\,1,\]
and Lemma~\ref{lem:coercivity} can then be applied to the effect of
\[-\int_{\R^d}\rv^m(\nabla\nabla^\alpha G_{n+1})\cdot\Bo_n[F^\circ] (\nabla\nabla^\alpha  G_{n+1})\,\le\,-\tfrac1{C_{V,M_0}}\|\nabla\nabla^\alpha G_{n+1}\|_{\Ld^2_{A,m}(\R^d)}^2.\]
Further using the bound~\eqref{eq:Gbound1} to estimate the last two right-hand side terms in~\eqref{eq:decomp-Iaa}, we deduce for $t\le T_n$, in view of~\eqref{eq:def-Tn2},
\begin{multline*}
I^{\alpha}_\alpha(G_{n+1};F_n) \,\le\,-\tfrac1{C_{V,M_0}}\|\nabla\nabla^\alpha G_{n+1}\|_{\Ld^2_{A,m}(\R^d)}^2\\
+C_{V,M_0,m}\|\nabla\nabla^\alpha G_{n+1}\|_{\Ld^2_{A,m}(\R^d)}\Big(1+\tfrac1{C_0}\|\nabla\nabla^\alpha G_{n+1}\|_{\Ld^2_{A,m}(\R^d)}\Big).
\end{multline*}
Note that this makes use of the control on $F^\circ$ in $H^{s+1}_m(\R^d)$ rather than only in $H^{s}_m(\R^d)$.
Choosing~$C_0$ large enough, this easily yields
\begin{equation*}
I^{\alpha}_\alpha(G_{n+1};F_n) +\tfrac1{2C_{V,M_0}} \|\nabla\nabla^\alpha G_{n+1}\|^2_{\Ld^2_{A,m}(\R^d)}\,\le\, C_{V,M_0,m}.
\end{equation*}
Now using~\eqref{eq:Gbound1} and~\eqref{eq:Gbound3} to estimate $I^\alpha_\gamma,J^\alpha_\gamma,K^\alpha$ in~\eqref{eq:evol-L2m},
we easily arrive at the following, for $t\le T_n$,
\begin{multline}\label{eq:est-Hs-der-t-GG}
\partial_t \|G_{n+1}\|^2_{H^s_m(\R^d)}+\tfrac1{C_{V,M_0}} \|\nabla\rnabla^s G_{n+1}\|^2_{\Ld^2_{A,m}(\R^d)}\\
\,\lesssim_{V,M_0,m,s}\,
\Big(1+\|G_{n+1}\|_{H^{s}_m(\R^d)}^2\Big)\Big(1+\|G_{n}\|_{H^{s\vee3}_m(\R^d)}^{2(s\vee2+1)}\Big).
\end{multline}
Note that the difference with~\eqref{eq:Hs-evol-fn1} is essential: the norm $\|\nabla\rnabla^sG_n\|_{\Ld^2_{A,m}(\R^d)}$ does not appear in the present right-hand side. This is permitted by the use of the improved estimate~\eqref{eq:Gbound3}, to be compared with~\eqref{eq:lemB-3} in Lemma~\ref{lem:B}, which was not needed in~\eqref{eq:Hs-evol-fn1} for the proof of local well-posedness close to equilibrium.

\medskip\noindent
Now from~\eqref{eq:est-Hs-der-t-GG},
after time integration, we deduce for $t\le T_n$, in view of~\eqref{eq:def-Tn2} and $G_{n+1}|_{t=0}=0$,
\begin{equation*}
\|G_{n+1}^t\|^2_{H^s_m(\R^d)}+\tfrac1{C_{V,M_0}} \|\nabla\rnabla^s G_{n+1}\|^2_{\Ld^2([0,t];\Ld^2_{A,m}(\R^d))}
\,\le\, t\,C_{V,M_0,m,s}\,e^{t\,C_{V,M_0,m,s}}.
\end{equation*}
Choosing $C_0\ge C_{V,M_0}$, this entails
\[T_{n+1}\ge T_n\wedge(eC_{V,M_0,m,s}C_0^2)^{-1}.\]
Therefore, setting $T_0:=(eC_{V,M_0,m,s}C_0^2)^{-1}$, we obtain by iteration a sequence $(F_n)_n$ satisfying the sequence of approximate equations~\eqref{eq:Fn-recurr} on the time interval~$[0,T_0]$ with
\[F_n=F^\circ+G_n,\qquad\|G_n\|_{\Ld^\infty([0,T_0];H^s_m(\R^d))}\,\le\,\tfrac1{C_0}.\]
The conclusion now follows easily by a compactness argument, passing to the limit $n\uparrow\infty$ in approximate equations~\eqref{eq:Fn-recurr}.
\end{proof}

\section{Landau approximation}
This section is devoted to the proof of Theorem~\ref{cor:Landau}.
As the Fourier transform of the rescaled potential~\eqref{eq:Vscale} takes the form
\begin{equation*}
\widehat V_{\delta}(k) \,=\, \delta^{d-a}\widehat V(\delta k),
\end{equation*}
the Lenard--Balescu equation~\eqref{eq:LB} reads as follows for the time-rescaled velocity density~$\tilde F_{\delta}=\mu_\beta+\sqrt{\mu_\beta}\tilde f_{\delta}$,
\[\partial_t\tilde F_{\delta}=\nabla\cdot\int_{\R^d}B_{\delta}(v,v-v_*;\nabla\tilde F_{\delta})\,\big(\tilde F_{\delta,*}\nabla\tilde F_{\delta}-\tilde F_{\delta}\nabla_*\tilde F_{\delta,*}\big),\]
in terms of
\begin{eqnarray*}
B_{\delta}(v,v-v_*;\nabla F)&:=&\int_{\R^d}(k\otimes k)\,\pi\widehat V(k)^2\tfrac{\delta(k\cdot(v-v_*))}{|\e_{\delta}(k,k\cdot v;\nabla F)|^2}\,\dbar k,\\
\e_{\delta}(k,k\cdot v;\nabla F)&:=&1+\delta^{d-a}\widehat V(k)\int_{\R^d}\tfrac{k\cdot\nabla F(v_*)}{k\cdot(v-v_*)-i0}\,dv_*.
\end{eqnarray*}
For $a< d$, global well-posedness can now obviously be deduced as in Theorem~\ref{th:global} uniformly with respect to $0< \delta\le1$. 
As the proof of Lemma~\ref{lem:eps-bndd-R}(ii) yields
\[\|\e_{\delta}(\cdot,\cdot;\nabla \tilde F_{\delta})-1\|_{\Ld^\infty(\R^+;\Ld^\infty(\R^d))}\,\lesssim\,\delta^{d-a}\|V\|_{\Ld^1(\R^d)}\big(1+\|\tilde f_{\delta}\|_{H^2(\R^d)}\big),\]
the conclusion follows from the explicit computation~\eqref{eq:expl-Landau-pr}.\qed

\section*{Acknowledgements}
MD acknowledges financial support from the CNRS-Momentum program. RW acknowledges support  of Universit\'e de Lyon through the IDEXLYON Scientific Breakthrough Project ``Particles drifting and propelling in turbulent flows'', 
and the hospitality of the UMPA ENS Lyon.

\bibliographystyle{plain}
\bibliography{biblio}

\end{document}